\documentclass[11pt]{article}

\usepackage{amssymb, amsmath, amsthm}
\usepackage[misc]{ifsym}
\usepackage{mathtools, algorithm, algpseudocode, paralist}
\usepackage{epsfig} 
\usepackage{epstopdf} 
\usepackage{geometry} 
\usepackage{graphicx, color, xcolor, booktabs}
\usepackage{subcaption}
\usepackage{fancyhdr, footmisc}  
\usepackage{nicefrac}
\usepackage{titlesec}
\usepackage{bbding,pifont}
\usepackage{enumerate}
\numberwithin{equation}{section}    
\usepackage{latexsym}

\newtheorem{definition}{Definition}[section]
\newtheorem{remark}{Remark}[section]
\newtheorem{theorem}{Theorem}[section]
\newtheorem{lemma}{Lemma}[section]

\newtheorem{prop}{Proposition}[section]

\newcommand{\abs}[1]{\lvert#1\rvert}

\usepackage[bookmarksnumbered, colorlinks, plainpages]{hyperref}
\hypersetup{urlcolor=blue, citecolor=red}

\title{\Large A second order positivity-preserving scheme for the nonlocal Cahn-Hilliard system with variable mobility and logarithmic Flory-Huggins potential}  

\author{
	Yuanyi Sheng \thanks{School of Mathematical Sciences, Beijing Normal University, Beijing 100875, PR China  (yuanyisheng@mail.bnu.edu.cn)}
	\and
	Zhengru Zhang\thanks{Laboratory of Mathematics and Complex Systems, Ministry of Education and School of Mathematical Sciences, Beijing Normal University, Beijing 100875, PR China (zrzhang@bnu.edu.cn, Corresponding author)}
}    
\date{}

\begin{document}

\maketitle
\begin{abstract}
	A second order finite difference scheme is analyzed for the nonlocal Cahn-Hilliard system with variable mobility and the singular Flory-Huggins logarithmic potential. The temporal discretization employs a modified Crank-Nicolson formulation, while the mobility is treated explicitly to guarantee ellipticity and reduce computational cost. To strictly enforce the positivity-preserving property at the discrete level, a nonlinear regularization term is added into the numerical scheme. Rigorous analysis shows that the scheme is uniquely solvable and stable under a modified numerical energy. The well-known analytical challenge associated with the variable mobility has to be handled carefully. To overcome this, a convergence framework is constructed based on two non-standard techniques: (1) a higher-order asymptotic expansion (extending up to the third order in time and fourth order in space) to retain a sufficiently order of accuracy; (2) a two-stage error strategy, wherein a rough error estimate first guarantees the discrete boundedness of the mobility and nonlinear terms, followed by a refined error analysis that yields the optimal convergence rate. Numerical experiments validate the theoretical results and demonstrate the robustness of the proposed scheme.

	\bigskip
	
	\noindent
	{\bf Key words}:\, Nonlocal Cahn-Hilliard system, variable mobility, logarithmic Flory-Huggins potential, second order accuracy, positivity preserving, energy stability, optimal convergence analysis
	
	\bigskip
	
	\noindent
	{\bf AMS subject classification}:\,\, 35K35, 35K55, 49J40, 65M06, 65M12	
\end{abstract}

\section{Introduction}
This work is concerned with the rigorous theoretical analysis of a second order numerical scheme for the nonlocal Cahn-Hilliard (NCH) system with variable mobility and singular Flory-Huggins potential. Let $\Omega\subset\mathbb{R}^d$ ($d=2,3$) be a smooth bounded spatial domain. The phase field variable $\phi(\mathbf{x},t)$, representing the normalized concentration difference of a binary mixture, is assumed to restrict within the physical domain $\mathcal{D} := (-1,1)$ at a point-wise level. The total free energy functional $E(\phi)$ comprises a local contribution $E_1(\phi)$ and a long-range interaction energetic term $E_2(\phi)$, defined respectively as
\begin{equation}
	\begin{aligned}
		&E(\phi)=E_1(\phi)+E_2(\phi),\label{energy}\\
		&E_1(\phi)=\int_\Omega (1+\phi)\ln(1+\phi)+(1-\phi)\ln(1-\phi)-\frac{\theta_{0}}{2}\phi^2+\frac{\varepsilon^2}{2}\abs{\nabla \phi}^{2}\,\mathrm{d}\mathbf{x},\\
		&E_2(\phi)=\frac{\sigma}{2}\int_\Omega \abs{(-\Delta)^{-\frac{1}{2} }(\phi-\bar{\phi})}^2\,\mathrm{d}\mathbf{x},\qquad\overline{\phi}=\frac{1}{\abs{\Omega}}\int_{\Omega}\phi\,\mathrm{d}\mathbf{x},
	\end{aligned}
\end{equation}
where $\theta_{0}$ and $\varepsilon$ denote positive physical parameters, and $\bar{\phi} \in \mathcal{D}$ is the spatial average \cite{Cahn1996TheCE,copetti1992numerical,DoiSoft}. The parameter $\sigma\ge 0$ scales the intensity of nonlocal interactions. In the localized limit $\sigma = 0$, the functional \eqref{energy} reduces to the classical local Cahn-Hilliard (CH) definition. For $\sigma>0$, the long range constraint, formulated via the inverse fractional Laplacian, yields the Ohta-Kawasaki free energy, which serves as a standard mathematical model for diblock copolymer self-assembly and microscopic phase separation dynamics \cite{bates1990block,Nishiura1995SomeMA,ohta1995dynamics,1986Equilibrium}.

 The evolutionary equation of the NCH system is governed by $H^{-1}(\Omega)$ gradient flow of the variational energy \eqref{energy}:
   \begin{equation}\label{pdeeq}
 	\begin{aligned}
 		\frac{\partial \phi}{\partial t} &=\nabla\cdot(\mathcal{M}(\phi)\nabla \mu),\\
 		\mu &=\ln (1+\phi)-\ln (1-\phi)-\theta_{0}\phi-{\varepsilon}^{2}\Delta\phi+\sigma (-\Delta)^{-1}(\phi-\bar{\phi}),
 	\end{aligned}
 \end{equation}
 where $\mu$ is the chemical potential and $\mathcal{M}(\phi)>0$ denotes the variable mobility. A direct variational derivation confirms that the system complies with the energy dissipation law,  $\frac{\mathrm{d}}{\mathrm{d}t}E(\phi) = -\int_\Omega \mathcal{M}(\phi)|\nabla \mu|^2 \mathrm{d}\mathbf{x} \le 0$. This dissipative structure constitutes the mathematical foundation for both the continuous analysis and the construction of positivity-preserving discrete approximations. A distinguishing feature of the system \eqref{pdeeq} is the concentration-dependent nature of the mobility $\mathcal{M}(\phi)$. Although a constant mobility is commonly assumed in existing numerical literature, it often fails to capture the realistic transport dynamics where the local concentration migration velocity scales with the thermodynamic force gradient \cite{bird1987dynamics,doi1988theory}. This feature was intrinsically recognized in the foundational phase field derivations \cite{cahn1971spinodal}. A physically consistent and mathematically sound archetype is given by $\mathcal{M}(\phi)=1-\lambda\phi^{2}$ with $0\le\lambda\le 1$ \cite{cahn1994overview}. In the degenerate limit $\lambda = 1$, the interfacial diffusion becomes dominant as bulk diffusion vanishes upon phase separation \cite{elliott1996cahn}. Numerical evidence indicates that degenerate or near-degenerate mobilities substantially improves numerical resolution and physical fidelity for multiphase flow simulations within the framework of fixed grid computations \cite{zhang2010cahn}.
 
From an analytical perspective, the coexistence of a variable mobility $\mathcal{M}(\phi)$ and a singular logarithmic potential introduces severe mathematical challenges that drastically distinguish this model from its constant mobility counterparts. Numerical methods utilizing polynomial regularizations of the Flory-Huggins potential or restricted to constant mobility \cite{chen19b,du2018stabilized,guan2017convergence} fail to simulate complex coarsening dynamics accurately. Recently, we proposed a first order, positivity-preserving scheme \cite{sheng2026positivity} for the NCH system \eqref{pdeeq}. While this work provided a robust framework for handling singular logarithmic potentials and accommodated a broad class of mobility functions, the first-order temporal accuracy remains limit in efficiency and resolution in complex, long time coarsening processes. This work is therefore devoted to the construction and analysis of a second order accurate scheme while retaining the essential properties of the model. The temporal discretization employs a modified Crank-Nicolson (CN) approximation. To enforce the strict ellipticity of the operator and ensure unique solvability, a second order explicit extrapolation is deployed for the mobility, which simultaneously reduces the computational cost. Instead of a direct CN approximation, we propose a modified form for the logarithmic term, $\frac{G(1\pm\phi^{n+1})-G(1\pm\phi^{n})}{\phi^{n+1}-\phi^{n}}$, which, when made inner product with the discrete time derivative, precisely yields the corresponding nonlinear energy difference. This thereby ensures the smooth progress of our energy stability analysis. Additionally, a nonlinear logarithmic artificial regularization term is introduced. It is revealed that the singular nature of the logarithmic function prevents the numerical solution of the phase variable reaching the limit singular values, so that the positivity preserving property could be theoretically proved. Finally, the nonlocal term is discretized through a second order temporal averaging, so that it does not appear in the form of modified energy in the theoretical analysis of energy stability.

The convergence analysis of the proposed second order scheme represents the core theoretical contribution of this work, primarily due to the intense nonlinear coupling between the variable mobility and the singular potential. The classical convergence analysis established for constant mobility systems \cite{chen2022modified,chen19b} are no longer applicable. To address this challenge, we establish a comprehensive convergence framework based on a combination of higher-order consistency expansions with the rough and refined error estimates. Specifically, we first introduce supplementary fields and perform a higher-order consistency analysis to ensure that error estimates maintain a sufficiently order. Next, by using rough error estimates and inverse inequalities, we estimate the discrete $\ell^{\infty}$ norm of the numerical solution at the next time step. This crucial step successfully establishes a priori $\ell^\infty$ bound for the numerical solution, thereby theoretically providing the necessary regularity both for the variable mobility and the singular logarithmic functions. Finally, through a refined error analysis, we obtain the desired convergence rate, confirming the optimal performance of our proposed scheme. It is particularly noteworthy that our convergence result applies to a broad and physically relevant class of mobility functions, demonstrating significant generality and practical importance.

 The rest of the article is organized as follows. Section 2 reviews the finite difference spatial discretization and presents the detailed formulation of our second order numerical scheme. The unique solvability, positivity preserving analysis and energy stability estimate are proved in Section 3. Section 4 is dedicated to the convergence analysis of the scheme, utilizing a methodology that combines higher-order consistency analysis with rough and refined error estimates. Finally, Section 5 presents various numerical results to validate the accuracy and performance of the proposed scheme, and Section 6 concludes this paper.

\section{The second order accurate numerical scheme}
\subsection{The finite difference spatial discretization}
We employ a standard centered finite difference framework for spatial discretization. Without loss of generality, let the computational domain be defined as $\Omega=(0,L)^3$ with $L>0$, partitioned by a uniform grid spacing $h = L/N$ for $N \in \mathbb{N}$ (the extension to non-uniform meshes is straightforward). We define the index sets $W:=\{\, p_{i+1/2}\, |\, i\in\mathbb{Z}\,\}$ and $V:=\{\, p_i=(i-1/2)\cdot h\, |\, i\in\mathbb{Z}\,\}$. The three-dimensional periodic discrete spaces for cell-centered and face-centered grid functions are defined respectively as
\begin{align*}
	&\mathcal{V}_{\mathrm{per}}:=\{\,u:V\times V\times V\to\mathbb{R}\, |\, u_{i,j,k}=u_{i+\ell N,j+m N,k+n N},\,\forall i,j,k,\ell,m,n\in \mathbb{Z}\,\},\\
	&\mathcal{W}_{\mathrm{per}}^{x}:=\{\,u:W\times V\times V\to\mathbb{R}\, |\, u_{i+\frac{1}{2},j,k}=u_{i+\frac{1}{2}+\ell N,j+m N,k+n N},\,\forall i,j,k,\ell,m,n\in \mathbb{Z}\,\},
\end{align*}
where $u_{i,j,k}=u(p_i,p_j,p_k)$. The directional face-centered spaces $\mathcal{W}_{\mathrm{per}}^{y}$ and $\mathcal{W}_{\mathrm{per}}^{z}$ are constructed analogously. The center-to-face and face-to-center averaging and difference operators, denoted by $A_{x},\, D_{x}:\mathcal{V}_{\mathrm{per}}\to\mathcal{W}_{\mathrm{per}}^{x}$, $a_{x},\, d_{x}:\mathcal{W}_{\mathrm{per}}^{x}\to\mathcal{V}_{\mathrm{per}}$, are formulated as
\begin{align*}
	&A_{x}u_{i+\frac{1}{2},j,k} := 
	\frac{1}{2}(u_{i+1,j,k}+u_{i,j,k}),\  
	&D_{x}u_{i+\frac{1}{2},j,k} := 
	\frac{1}{h}(u_{i+1,j,k}-u_{i,j,k}),\\
	&a_{x}u_{i,j,k} := 
	\frac{1}{2}(u_{i+\frac{1}{2},j,k}+u_{i-\frac{1}{2},j,k}),\  
	&d_{x}u_{i,j,k} := 
	\frac{1}{h}(u_{i+\frac{1}{2},j,k}-u_{i-\frac{1}{2},j,k}),
\end{align*}
with parallel definitions extended to the $y$- and $z$-directions.
By introducing $\vec{\mathcal{W}}_{\mathrm{per}} := \mathcal{W}_{\mathrm{per}}^{x}\times\mathcal{W}_{\mathrm{per}}^{y}\times\mathcal{W}_{\mathrm{per}}^{z}$, the discrete gradient $\nabla_{h}$ and the discrete divergence $\nabla_{h}\cdot$ are represented point-wise by
\begin{align*}
	\nabla_{h}:\mathcal{V}_{\mathrm{per}}\to\vec{\mathcal{W}}_{\mathrm{per}},\,\qquad
	&\nabla_{h}u_{i,j,k}=(D_{x}u_{i+1/2,j,k},D_{y}u_{i,j+1/2,k},D_{z}u_{i,j,k+1/2}),\\
	\nabla_{h}\cdot:\vec{\mathcal{W}}_{\mathrm{per}}\to\mathcal{V}_{\mathrm{per}},\qquad
	&\nabla_{h}\cdot\vec{w}_{i,j,k}=d_{x}w_{i,j,k}^{x}+d_{y}w_{i,j,k}^{y}+d_{z}w_{i,j,k}^{z},\quad
	\vec{w}=(w^x,w^y,w^z)\in\vec{\mathcal{W}}_{\mathrm{per}}.
\end{align*}
The standard discrete Laplacian operator $\Delta_{h}:\mathcal{V}_{\mathrm{per}}\to\mathcal{V}_{\mathrm{per}}$ is subsequently generated by the composition $\Delta_{h}u := \nabla_{h}\cdot(\nabla_{h}u)$.

To naturally accommodate the variable mobility coefficient, let $g$ be a periodic scalar function defined at the face-centered grid points. For any vector $\vec{w}\in\vec{\mathcal{W}}_{\mathrm{per}}$, the product $g\vec{w}\in\vec{\mathcal{W}}_{\mathrm{per}}$ is defined via point-wise multiplication, and the variable coefficient discrete divergence operator $\nabla_{h}\cdot(g\nabla_{h}):\mathcal{V}_{\mathrm{per}}\to\mathcal{V}_{\mathrm{per}}$ is explicitly specified as
\begin{equation*}
\nabla_{h}\cdot(g\nabla_{h}u)_{i,j,k}=d_x(g\cdot D_xu)_{i,j,k}+d_y(g\cdot D_yu)_{i,j,k}+d_z(g\cdot D_zu)_{i,j,k}.
\end{equation*}

The discrete inner products over the cell-centered and face-centered spaces are defined as
\begin{align*}
	&\left\langle{u,\xi}\right\rangle_{\Omega} :=  h^3\sum\limits_{i,j,k=1}^{N}u_{i,j,k}\,\xi_{i,j,k},\quad u,\,\xi\in\mathcal{V}_{\mathrm{per}},\qquad
	[u, \xi]_{\mathrm{x}} := \left\langle{a_x(u\xi),1}\right\rangle_{\Omega},\quad u,\, \xi\in\mathcal{W}_{\mathrm{per}}^{x},\\
	&[\vec{w}_1, \vec{w}_2]_{\Omega} := [w_1^x, w_2^x]_{\mathrm{x}}+[w_1^y, w_2^y]_{\mathrm{y}}+[w_1^z, w_2^z]_{\mathrm{z}},\qquad\ \vec{w}_{k}=(w_{k}^x,w_{k}^y,w_{k}^z)\in\vec{\mathcal{W}}_{\mathrm{per}},\quad k=1,2.
\end{align*}
$[u, \xi]_{\mathrm{y}}$ and $[u, \xi]_{\mathrm{z}}$ in $\mathcal{W}_{\mathrm{per}}^{y}$ and $\mathcal{W}_{\mathrm{per}}^{z}$ can be analogously formulated. Subsequently, the induced discrete $\ell^p$ and discrete Sobolev norms for cell-centered grid functions $u\in\mathcal{V}_{\mathrm{per}}$ are specified by
\begin{align*}
	&\|u\|_2^2 := \left\langle{u,u}\right\rangle_{\Omega},\quad
	\|\nabla_{h}u\|_2^2 =[D_x u,D_x u]_{\mathrm{x}}+[D_y u,D_y u]_{\mathrm{y}}+[D_z u,D_z u]_{\mathrm{z}},\quad
	\|u\|_{\infty} := \max_{1\le i,j,k\le N}\abs{u_{i,j,k}},\\
	&\|u\|_p^p := \left\langle{\abs{u}^p,1}\right\rangle_{\Omega},\quad
	\|\nabla_{h} u\|_{p}^{p} = [\abs{D_x u}^p,1]_{\mathrm{x}}+[\abs{D_y u}^p,1]_{\mathrm{y}}+[\abs{D_z u}^p,1]_{\mathrm{z}},\quad\ 1\le p<\infty,
\end{align*}
alongside the higher-order spatial norms
\begin{equation*}
\| u\|_{H_h^1}^2 := \| u\|_2^2+\|\nabla_{h} u\|_2^2,\qquad 
\| u\|_{H_h^2}^2 := \| u\|_2^2+\|\nabla_{h} u\|_2^2+\|\Delta_{h} u\|_2^2,\qquad \forall u\in\mathcal{V}_{\mathrm{per}}.
\end{equation*}

The foundational summation-by-parts properties are recalled below, which ensure the conservation and dissipation attributes at the discrete level.
\begin{lemma}[\cite{wise2009energy}]\label{sum by part}
	Let $g$ denote a periodic scalar function evaluated at the face-centered points. For any functions $v,\, u\in\mathcal{V}_{\mathrm{per}}$ and $\vec{w}\in\vec{\mathcal{W}}_{\mathrm{per}}$, the following summation-by-parts identities hold:
	\begin{equation*}
	\left\langle{v,\nabla_{h}\cdot\vec{w}\ }\right\rangle_{\Omega}=-\left[\nabla_{h}v,\vec{w}\ \right]_{\Omega},\quad \left\langle{v,\nabla_{h}\cdot(g\nabla_{h} u)}\right\rangle_{\Omega}=-[\nabla_{h}v,g\nabla_{h} u]_{\Omega}. 
	\end{equation*}
\end{lemma}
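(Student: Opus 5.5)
The plan is to prove the summation-by-parts identities one coordinate direction at a time, since every object in the statement splits into $x$-, $y$- and $z$-parts. I will work out the $x$-direction and obtain the others by symmetry. By definition,
\[
\langle v,\nabla_h\cdot\vec w\rangle_\Omega=\langle v,d_xw^x\rangle_\Omega+\langle v,d_yw^y\rangle_\Omega+\langle v,d_zw^z\rangle_\Omega,
\]
and $[\nabla_hv,\vec w]_\Omega$ is the sum of $[D_xv,w^x]_{\mathrm x}$ and its $y$, $z$ analogues. So the first identity reduces to the scalar statement
\[
\langle v,d_xw^x\rangle_\Omega=-[D_xv,w^x]_{\mathrm x}\qquad\text{for all } v\in\mathcal V_{\mathrm{per}},\ w^x\in\mathcal W^x_{\mathrm{per}}.
\]

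To prove this scalar identity, I fix the indices $j,k$ and expand the left side:
\[
h^3\sum_{i=1}^N v_{i,j,k}\,\frac{w^x_{i+1/2,j,k}-w^x_{i-1/2,j,k}}{h}.
\]
This is a discrete Abel summation. I split the sum into two pieces and shift the index $i\mapsto i+1$ in the second piece. Periodicity, $v_{N+1}=v_1$ and $w^x_{1/2}=w^x_{N+1/2}$, means the shifted range again covers one full period, so no boundary terms appear. The result is
\[
-h^3\sum_{i=1}^N \frac{v_{i+1,j,k}-v_{i,j,k}}{h}\,w^x_{i+1/2,j,k}=-h^3\sum_{i=1}^N (D_xv\,w^x)_{i+1/2,j,k}.
\]

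It remains to identify this sum with $-[D_xv,w^x]_{\mathrm x}=-\langle a_x(D_xv\,w^x),1\rangle_\Omega$. For any periodic face function $f$,
\[
\langle a_xf,1\rangle_\Omega=\tfrac{h^3}{2}\sum_i\bigl(f_{i+1/2}+f_{i-1/2}\bigr)=h^3\sum_i f_{i+1/2},
\]
where the last step again uses a periodic index shift. Summing over $j,k$ and over the three directions then gives the first identity.

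For the second identity, I apply the first with $\vec w=g\nabla_hu$. This vector lies in $\vec{\mathcal W}_{\mathrm{per}}$ because $g$ is a periodic face-centered function and the product is taken pointwise on each face component. By the definition given above, $\nabla_h\cdot(g\nabla_hu)$ coincides with $\nabla_h\cdot\vec w$ for this $\vec w$, so the claim follows immediately.

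I expect no real obstacle. The only point needing care is the index bookkeeping: the half-integer face indices have to be matched correctly, the periodic shifts must leave the summation range equal to exactly one period, and the face inner product defined through the averaging operator $a_x$ must reduce to a plain face sum.
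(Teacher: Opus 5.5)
Your proof is correct and complete. The paper gives no proof of its own and simply cites \cite{wise2009energy}, where the identity is established by the same periodic index-shift (discrete Abel summation) computation, with the reduction of $[\cdot,\cdot]_{\mathrm x}$ to a plain face sum and the variable-coefficient case $\vec w=g\nabla_h u$ handled as you handle them.
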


To facilitate the subsequent convergence analysis, a discrete analogue of the Sobolev space $ H_{\mathrm{per}}^{-1}(\Omega)$ must be precisely constructed. We introduce the mean-zero grid function space
\begin{equation*}
\mathring{\mathcal{V}}_{\mathrm{per}} := \Big\{\,\nu\in\mathcal{V}_{\mathrm{per}}\, \Big| \, \bar{\nu}
=\frac{1}{\abs{\Omega}}\langle{\nu,1}\rangle_{\Omega}=0\,\Big\}.
\end{equation*}
For any $\phi\in\mathcal{V}_{\mathrm{per}}$, it holds that $\phi-\bar{\phi}\in\mathring{\mathcal{V}}_{\mathrm{per}}$. 
Given a strictly positive, periodic face-centered weight function $\mathcal{M}$ (which will represent the explicitly extrapolated variable mobility), we define the discrete elliptic operator $\mathcal{L}_{\mathcal{M}}(\psi) := -\nabla_{h}\cdot(\mathcal{M}\nabla_{h}\psi)$. For any $\phi \in \mathring{\mathcal{V}}_{\mathrm{per}}$, there exists a unique solution $\psi\in\mathring{\mathcal{V}}_{\mathrm{per}}$ to the equation $\mathcal{L}_{\mathcal{M}}(\psi) = \phi$. Consequently, a discrete $\mathcal{L}_{\mathcal{M}}^{-1}$ bilinear form is defined on $\mathring{\mathcal{V}}_{\mathrm{per}} \times \mathring{\mathcal{V}}_{\mathrm{per}}$ as
\begin{equation*}
	\langle{\phi_{1},\phi_{2}}\rangle_{\mathcal{L}_{\mathcal{M}}^{-1}} := \left[\mathcal{M}\nabla_{h}\psi_{1},\nabla_{h}\psi_{2}\right]_{\Omega},\quad\phi_{1},\,\phi_{2}\in\mathring{\mathcal{V}}_{\mathrm{per}},
\end{equation*}
where $\mathcal{L}_{\mathcal{M}}(\psi_{i})=\phi_{i}$ for $i=1,2$.

\begin{lemma}[\cite{wang2011energy}]
	The bilinear mapping $\langle{\cdot,\cdot}\rangle_{\mathcal{L}_{\mathcal{M}}^{-1}}$ defines a valid inner product on $\mathring{\mathcal{V}}_{\mathrm{per}}$ and satisfies
	\begin{equation*}
\langle{\phi_{1},\phi_{2}}\rangle_{\mathcal{L}_{\mathcal{M}}^{-1}}=\left\langle{\phi_{1},\mathcal{L}_{\mathcal{M}}^{-1}(\phi_{2})}\right\rangle_{\Omega}=\left\langle{\mathcal{L}_{\mathcal{M}}^{-1}(\phi_{1}),\phi_{2}}\right\rangle_{\Omega},\quad\phi_{1},\,\phi_{2}\in\mathring{\mathcal{V}}_{\mathrm{per}}.
	\end{equation*}
	Accordingly, the norm induced by this inner product is given by
	\begin{equation*}
	\|\phi\|_{\mathcal{L}_{\mathcal{M}}^{-1}} := \sqrt{\langle{\phi,\phi}\rangle_{\mathcal{L}_{\mathcal{M}}^{-1}}},\quad \forall\phi\in\mathring{\mathcal{V}}_{\mathrm{per}}.
	\end{equation*}
\end{lemma}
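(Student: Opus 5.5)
The plan is to reduce everything to the weighted summation-by-parts identity of Lemma~\ref{sum by part}, with the strict positivity of $\mathcal{M}$ giving definiteness. The statement presupposes that $\mathcal{L}_{\mathcal{M}}$ is a bijection on $\mathring{\mathcal{V}}_{\mathrm{per}}$. I will check this first, since it is what makes the bilinear form well defined. Summation by parts gives $\langle{\psi,\mathcal{L}_{\mathcal{M}}\psi}\rangle_{\Omega}=[\mathcal{M}\nabla_h\psi,\nabla_h\psi]_{\Omega}$. Since $\mathcal{M}>0$ at every face, vanishing of this quantity forces $\nabla_h\psi=0$, hence $\psi$ is constant by periodicity, hence $\psi=0$ once it has mean zero. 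Moreover $\langle{\mathcal{L}_{\mathcal{M}}\psi,1}\rangle_{\Omega}=-[\nabla_h 1,\mathcal{M}\nabla_h\psi]_{\Omega}=0$, so $\mathcal{L}_{\mathcal{M}}$ maps $\mathring{\mathcal{V}}_{\mathrm{per}}$ into itself. Being an injective linear map on a finite-dimensional space, it is bijective.

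Next, the identity. Let $\mathcal{L}_{\mathcal{M}}\psi_i=\phi_i$. Applying the second identity in Lemma~\ref{sum by part} with $v=\psi_1$, $u=\psi_2$, $g=\mathcal{M}$ gives
\[
\langle{\mathcal{L}_{\mathcal{M}}^{-1}(\phi_1),\phi_2}\rangle_{\Omega}=\langle{\psi_1,-\nabla_h\cdot(\mathcal{M}\nabla_h\psi_2)}\rangle_{\Omega}=[\nabla_h\psi_1,\mathcal{M}\nabla_h\psi_2]_{\Omega}=\langle{\phi_1,\phi_2}\rangle_{\mathcal{L}_{\mathcal{M}}^{-1}}.
\]
Swapping the roles of $\psi_1$ and $\psi_2$ gives the other equality, because the face inner product $[\cdot,\cdot]_{\Omega}$ is symmetric and $\mathcal{M}$ enters pointwise. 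This also shows the form is symmetric.

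For the inner-product axioms, bilinearity follows because $\mathcal{L}_{\mathcal{M}}^{-1}$ is linear. Symmetry was shown above. For positivity, $\langle{\phi,\phi}\rangle_{\mathcal{L}_{\mathcal{M}}^{-1}}=[\mathcal{M}\nabla_h\psi,\nabla_h\psi]_{\Omega}$ is a sum of nonnegative terms of the form $h^3\,\mathcal{M}\,|D_x\psi|^2$, using the averaging definition of $[\cdot,\cdot]_{\mathrm{x}}$ with positive weights. If it vanishes, then $\nabla_h\psi=0$, so $\psi=0$ as in the first step, and therefore $\phi=\mathcal{L}_{\mathcal{M}}\psi=0$. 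The norm is then the induced one.

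The only mildly delicate point is applying Lemma~\ref{sum by part} with a face-centered weight $g$. I need the product $\mathcal{M}\,D_x\psi$ to sit on the same $x$-faces as $D_x\psi$, so that $[\cdot,\cdot]_{\mathrm{x}}$ pairs them correctly. This holds by the paper's definition of $g\vec w$ as a pointwise product on faces, so I expect no real obstacle.
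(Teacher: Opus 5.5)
Your argument is correct and complete. You check that $\mathcal{L}_{\mathcal{M}}$ is a bijection on $\mathring{\mathcal{V}}_{\mathrm{per}}$, derive both identities from the weighted summation-by-parts formula of Lemma~\ref{sum by part} together with the pointwise symmetry of $[\cdot,\cdot]_{\Omega}$, and obtain definiteness from $[\mathcal{M}\nabla_h\psi,\nabla_h\psi]_{\Omega}=h^3\sum \mathcal{M}\,|D\psi|^2$ with $\mathcal{M}>0$. These are exactly the needed ingredients.

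The paper gives no proof of this lemma and simply cites \cite{wang2011energy}. Your summation-by-parts verification is the standard argument behind that citation, and it fills in details the paper leaves out.

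There is one harmless sign slip: $\langle \mathcal{L}_{\mathcal{M}}\psi,1\rangle_{\Omega}=+[\nabla_h 1,\mathcal{M}\nabla_h\psi]_{\Omega}$, not $-$. This does not matter, since $\nabla_h 1=0$.
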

\begin{remark}
In the constant coefficient limit where $\mathcal{M}\equiv 1$, this framework simplifies to the standard discrete $H^{-1}_{\mathrm{per}}$ inner product and norm, denoted as  $\langle{\cdot,\cdot}\rangle_{\mathcal{L}_{\mathcal{M}}^{-1}}=:\langle{\cdot,\cdot}\rangle_{-1,h}$, $\|\cdot\|_{\mathcal{L}_{\mathcal{M}}^{-1}}=:\|\cdot\|_{-1,h}$,
	\begin{equation*}
	\langle{u,v}\rangle_{-1,h} := \langle{u,(-\Delta_{h})^{-1}v}\rangle_{\Omega}=\langle{(-\Delta_{h})^{-1}u,v}\rangle_{\Omega},\qquad\|u\|_{-1,h} := \sqrt{\langle{u,u}\rangle_{-1,h}}.
	\end{equation*}
\end{remark}

The following discrete Sobolev inequalities for grid functions have been derived in previous works, which play significant roles in our proof of convergence analysis.
\begin{lemma}[\cite{chen2022preconditioned,wang2011energy}]\label{norm4}
	For any periodic grid function $u$ over 3D cell-centered mesh points, the associated discrete Sobolev inequalities are satisfied:
	\begin{equation*}
		\|u\|_{4}\le C\|u\|_{H_{h}^{1}},\qquad\|\nabla_{h}u\|_{4}\le C\|\Delta_{h}u\|_{2}.
	\end{equation*}
	Positive constant $C$ only depends on the domain $\Omega$.
\end{lemma}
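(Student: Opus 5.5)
The plan is to prove both inequalities in Lemma~\ref{norm4} with discrete Fourier analysis on the periodic cell-centered grid, which mirrors the continuous Sobolev embedding $H^1\hookrightarrow L^4$ in 3D.

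\textbf{The first inequality.} Expand $u\in\mathcal{V}_{\mathrm{per}}$ as a discrete Fourier series $u_{i,j,k}=\sum_{\ell,m,n}\hat u_{\ell,m,n}\exp\big(2\pi \mathrm{i}(\ell p_i+m p_j+n p_k)/L\big)$, with frequencies in $-N/2<\ell,m,n\le N/2$. Let $u_F$ be the trigonometric interpolant, a continuous function on $\Omega$ with the same coefficients. The argument then has four steps.
\begin{itemize}
\item[(i)] Parseval gives $\|u\|_2=\|u_F\|_{L^2}$.
\item[(ii)] The eigenvalues of $-D_x$-type operators are $\frac{4}{h^2}\sin^2(\pi \ell h/L)$. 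Since $\frac{2}{\pi}|x|\le|\sin x|$ for $|x|\le\pi/2$, we get $\|\nabla u_F\|_{L^2}\le \frac{\pi}{2}\|\nabla_h u\|_2$, so $\|u_F\|_{H^1}\le C\|u\|_{H_h^1}$.
\item[(iii)] The continuous Sobolev inequality gives $\|u_F\|_{L^4}\le C\|u_F\|_{H^1}$ in 3D.
\item[(iv)] We need to compare the discrete norm $\|u\|_4$, which samples $u_F$ at grid points, with $\|u_F\|_{L^4}$. Write $\|u\|_4^4=\|u^2\|_2^2$ and note that $u^2$ is the grid restriction of $u_F^2$. The function $u_F^2$ has frequencies up to $N$, so aliasing folds each frequency onto at most $2^3$ images. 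Hence $\|u^2\|_2\le C\|(u_F^2)\|_{L^2}$, which gives $\|u\|_4\le C\|u_F\|_{L^4}$.
\end{itemize}
Chaining (i)--(iv) proves the first inequality. I expect step (iv) to be the main technical obstacle. The estimate I would try is the one established in the cited works \cite{chen2022preconditioned,wang2011energy}: $\|u\|_4\le C\|u_F\|_{L^4}$, with a constant independent of $h$.

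\textbf{The second inequality.} Apply the same route to each component of $\nabla_h u$. Each difference $D_xu$ lives on a shifted (face) grid, but it is still periodic, and its Fourier coefficients are $\hat u$ multiplied by the symbol of $D_x$. By (iv), applied on the shifted grid, $\|D_xu\|_4\le C\|(D_xu)_F\|_{L^4}$. By Sobolev, $\|(D_xu)_F\|_{L^4}\le C\|(D_xu)_F\|_{H^1}$.

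Two Fourier-side facts then finish the bound:
\begin{itemize}
\item The $H^1$ seminorm of $(D_xu)_F$ is controlled by the symbol estimate $\sum_{\alpha}|\xi_\alpha|^2|\sigma_x|^2\le C(\sum_\alpha|\sigma_\alpha|^2)^2$, where $\sigma_\alpha$ is the discrete symbol, again using the comparability of the discrete and continuous symbols. Summing over $\alpha$, this gives control by $C\|\Delta_h u\|_2$.
\item The $L^2$ part, in mean-free form, is handled by noting that $D_xu$ has zero mean and $|\sigma_x|\le C|\sigma|^2$ for nonzero frequencies. The zero mode of $u$ does not affect $\nabla_h u$.
\end{itemize}
Summing the three directions yields $\|\nabla_h u\|_4\le C\|\Delta_h u\|_2$. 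The constant depends only on $L$, which fixes $\Omega$, and on the Sobolev constant.
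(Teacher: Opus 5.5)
Your proof is correct and follows the standard argument behind the cited references; the paper itself gives no proof beyond the citation. That argument passes to the trigonometric interpolant, uses aliasing of the band-limited $u_F^2$ to get $\|u\|_4\le 2^{3/4}\|u_F\|_{L^4}$, applies the continuous embedding $H^1\hookrightarrow L^4$, compares symbols via $\frac{2}{\pi}|\xi_\alpha|\le|\sigma_\alpha|$, and obtains the second inequality by running the same chain for each $D_xu$ on its face grid. The only slip is cosmetic: $\frac{4}{h^2}\sin^2(\pi\ell h/L)$ is the eigenvalue of $-d_xD_x$ (equivalently $|\sigma_x|^2$), not of $-D_x$.
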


\subsection{A second order numerical scheme}
The second order accurate (in time) numerical scheme is constructed as follows: given $\phi^{n},\,\phi^{n-1}\in\mathcal{V}_{\mathrm{per}}$, find $\phi^{n+1},\,\mu^{n+\frac{1}{2}}\in\mathcal{V}_{\mathrm{per}}$ such that
\begin{equation}\label{2order}
	\begin{aligned}
		\frac{\phi^{n+1}-\phi^{n}}{\tau}
		&=\nabla_{h}\cdot(\breve{\mathcal{M}}^{n+\frac{1}{2}}\nabla_{h}\mu^{n+\frac{1}{2}}),\\
		\mu^{n+\frac{1}{2}}
		&=\frac{G(1+\phi^{n+1})-G(1+\phi^{n})}{\phi^{n+1}-\phi^{n}}
		  +\frac{G(1-\phi^{n+1})-G(1-\phi^{n})}{\phi^{n+1}-\phi^{n}}\\
		&\quad-\theta_{0}\breve{\phi}^{n+\frac{1}{2}}
		   -\varepsilon^2\Delta_{h}\hat{\phi}^{n+\frac{1}{2}}
		   +\sigma(-\Delta_{h})^{-1}\left(\phi^{n+\frac{1}{2}}-\overline{\phi^{n+\frac{1}{2}}}\right)\\
		&\quad+\tau \, \Big(\ln(1+\phi^{n+1})-\ln(1+\phi^{n})-\ln(1-\phi^{n+1})+\ln(1-\phi^{n})\Big),
	\end{aligned}
\end{equation}
where
\begin{equation*}
G(x)=x\ln x,\;
\breve{\phi}^{n+\frac{1}{2}}=\frac{3}{2}\phi^{n}-\frac{1}{2}\phi^{n-1},\;
\hat{\phi}^{n+\frac{1}{2}}=\frac{3}{4}\phi^{n+1}+\frac{1}{4}\phi^{n-1},\;
\phi^{n+\frac{1}{2}}=\frac{1}{2}\phi^{n+1}+\frac{1}{2}\phi^{n}.
\end{equation*}
The discrete mobility is evaluated at the face-centered points
\begin{equation}\label{mob regu}
		\breve{\mathcal{M}}^{n+\frac{1}{2}}=\Big((A_{h}(\frac{3}{2}\mathcal{M}^{n}-\frac{1}{2}\mathcal{M}^{n-1}))^{2}+\tau^{6}\Big)^{\frac{1}{2}}.
\end{equation}
This definition guarantees both second order accuracy in time and a point-wise positive lower bound, $\breve{\mathcal{M}}^{n+1/2}\ge\tau^3$, at all face-centered points. An operator and the corresponding norm associated with the discrete mobility are defined below.
\begin{definition}
	For any $\phi\in\mathring{\mathcal{V}}_{\mathrm{per}}$, there exists a unique $\psi\in\mathring{\mathcal{V}}_{\mathrm{per}}$ that solves
	\begin{equation*}
		\mathcal{L}_{\breve{\mathcal{M}}^{n+1/2}}(\psi) := -\nabla_{h}\cdot(\breve{\mathcal{M}}^{n+1/2}\nabla_{h}\psi)=\phi.
	\end{equation*}
	In turn, the corresponding norm could be introduced:
	\begin{equation*}
		\|\phi\|_{\mathcal{L}_{\breve{\mathcal{M}}^{n+1/2}}^{-1}}=\sqrt{\left\langle{\phi,\mathcal{L}_{\breve{\mathcal{M}}^{n+1/2}}^{-1}(\phi)}\right\rangle_{\Omega}}.
	\end{equation*}
\end{definition}

\begin{lemma}[\cite{chen19b}]\label{lemma mob 2nd}
	Assume that $\mathcal{M}(x)\ge\mathcal{M}_{0}>0$ for all $x\in[-1,1]$. Let $u_{1},\, u_{2}\in\mathcal{V}_{\mathrm{per}}$ be two periodic grid functions satisfying $u_{1}-u_{2}\in\mathring{\mathcal{V}}_{\mathrm{per}}$ and $\|u_{1}\|_{\infty},\,\|u_{2}\|_{\infty}\le M_{h}$, where $M_{h}$ may depend upon $h$. Then, we have the following estimate
	\[
	\|\mathcal{L}_{\breve{\mathcal{M}}^{n+1/2}}^{-1}(u_{1}-u_{2})\|_{\infty}\le C.
	\]
	Positive constant $C$ depends only on $M_{h}$, $\mathcal{M}_{0}$, $h$ and the domain $\Omega$. 
\end{lemma}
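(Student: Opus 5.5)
The estimate is a finite-dimensional, $h$-dependent bound, so the plan is to combine an energy estimate for $\psi := \mathcal{L}_{\breve{\mathcal{M}}^{n+1/2}}^{-1}(u_1-u_2)\in\mathring{\mathcal{V}}_{\mathrm{per}}$ with inverse inequalities, not to invoke any elliptic regularity. Throughout I will use the uniform lower bound $\breve{\mathcal{M}}^{n+1/2}\ge \breve{\mathcal{M}}_0>0$ at every face point. As literally defined in \eqref{mob regu}, one only has $\breve{\mathcal{M}}^{n+1/2}\ge\tau^3$. To get an $\tau$-independent constant, I would use the assumption $\mathcal{M}(x)\ge\mathcal{M}_0$ on $[-1,1]$ together with the fact that, in the setting where the lemma is applied, $\|\phi^n\|_\infty,\|\phi^{n-1}\|_\infty<1$. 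Then the point values $\mathcal{M}^n,\mathcal{M}^{n-1}$ lie in $[\mathcal{M}_0,\max_{[-1,1]}\mathcal{M}]$. If the extrapolation $\tfrac32\mathcal{M}^n-\tfrac12\mathcal{M}^{n-1}$ could degenerate, I would fall back on the $\tau^3$ bound and let $C$ depend on it. I expect the dependence of the constant in the statement to absorb this.

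\emph{Step 1 (energy estimate).} Take the inner product of $\mathcal{L}_{\breve{\mathcal{M}}^{n+1/2}}(\psi)=u_1-u_2$ with $\psi$ and apply the summation-by-parts identity of Lemma~\ref{sum by part}. This gives
\[
\breve{\mathcal{M}}_0\|\nabla_h\psi\|_2^2\le[\breve{\mathcal{M}}^{n+1/2}\nabla_h\psi,\nabla_h\psi]_\Omega=\langle u_1-u_2,\psi\rangle_\Omega\le\|u_1-u_2\|_2\|\psi\|_2 .
\]
Since $\psi$ has zero mean, the discrete Poincar\'e inequality $\|\psi\|_2\le C_\Omega\|\nabla_h\psi\|_2$ holds. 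Combined with $\|u_1-u_2\|_2\le 2M_h|\Omega|^{1/2}$, this yields
\[
\|\nabla_h\psi\|_2\le 2C_\Omega M_h|\Omega|^{1/2}/\breve{\mathcal{M}}_0
\]
and the same bound, up to a factor $C_\Omega$, for $\|\psi\|_2$.

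\emph{Step 2 (passage to $\ell^\infty$).} Apply the elementary inverse inequality $\|\psi\|_\infty\le h^{-3/2}\|\psi\|_2$. It holds because $h^3\max|\psi_{i,j,k}|^2\le\|\psi\|_2^2$. This gives
\[
\|\psi\|_\infty\le C(\Omega)\,M_h\,\breve{\mathcal{M}}_0^{-1}h^{-3/2},
\]
which is a constant depending only on $M_h$, the mobility lower bound, $h$ and $\Omega$, as claimed. If a sharper $h$-dependence is wanted, one could instead bound $\|\psi\|_\infty$ through a discrete $H^2$-type argument, but the statement does not require it.

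The main obstacle is not the algebra but justifying the $\tau$-independent positive lower bound of $\breve{\mathcal{M}}^{n+1/2}$. This is the step where the hypothesis $\mathcal{M}\ge\mathcal{M}_0$ on $[-1,1]$ and the positivity-preserving property of the previous iterates enter. Everything else is a direct energy estimate followed by a finite-dimensional inverse inequality.
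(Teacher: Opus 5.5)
Your two steps are the standard energy-plus-inverse-inequality argument behind the cited lemma, and they are correct. You test $\mathcal{L}_{\breve{\mathcal{M}}^{n+1/2}}(\psi)=u_1-u_2$ against $\psi$ and apply summation by parts and the discrete Poincar\'e inequality, then pass to $\ell^\infty$ via $\|\psi\|_\infty\le h^{-3/2}\|\psi\|_2$. With $m_\ast:=\min\breve{\mathcal{M}}^{n+1/2}$ this gives $\|\psi\|_\infty\le 2C_\Omega^2|\Omega|^{1/2}M_h\,m_\ast^{-1}h^{-3/2}$. Using $\|\psi\|_\infty\le Ch^{-1/2}\|\psi\|_{H_h^1}$ would only change the power of $h$, which does not matter here.

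The step that does not go through is the one you flag as the main obstacle. A $\tau$-independent lower bound on $\breve{\mathcal{M}}^{n+1/2}$ cannot be extracted from $\mathcal{M}\ge\mathcal{M}_0$ and $\|\phi^n\|_\infty,\|\phi^{n-1}\|_\infty<1$. Positivity of the previous iterates only places $\mathcal{M}^n,\mathcal{M}^{n-1}$ in $[\mathcal{M}_0,\mathcal{M}_1]$. That gives $\tfrac32\mathcal{M}^n-\tfrac12\mathcal{M}^{n-1}\ge\tfrac32\mathcal{M}_0-\tfrac12\mathcal{M}_1$, which is positive only when $\mathcal{M}_1<3\mathcal{M}_0$.

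For the test mobility \eqref{test mob}, let $\phi^n$ take only the values $\pm 2\sqrt{2}/3$ and $\phi^{n-1}$ only the values $\pm 2/3$. Then $\mathcal{M}^n\equiv 1/5$ and $\mathcal{M}^{n-1}\equiv 3/5$, so the extrapolation vanishes identically. By \eqref{mob regu}, $\breve{\mathcal{M}}^{n+1/2}\equiv\tau^3$, hence $\mathcal{L}^{-1}_{\breve{\mathcal{M}}^{n+1/2}}=\tau^{-3}(-\Delta_h)^{-1}$, and the $\ell^\infty$ bound genuinely grows like $\tau^{-3}$.

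So your fallback $m_\ast=\tau^3$ is not a contingency; it is the actual content of the proof. The resulting $\tau$-dependence is not absorbed by $M_h,\mathcal{M}_0,h,\Omega$, and the lemma as printed is too optimistic on this point for the regularized mobility. What you prove, and what is true, is the estimate with $C$ depending also on $\tau$, e.g. $C\le C(\Omega)M_h h^{-3/2}\tau^{-3}$. Alternatively, you get a $\tau$-independent constant under the extra hypothesis $\max_{[-1,1]}\mathcal{M}<3\mathcal{M}_0$, since then $\breve{\mathcal{M}}^{n+1/2}\ge\tfrac32\mathcal{M}_0-\tfrac12\max_{[-1,1]}\mathcal{M}>0$.

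The $\tau$-dependent version is all that Theorem~\ref{positive theo} needs, because there $\tau$ and $h$ are fixed and only the finiteness of $J_0$ matters. State the result that way rather than asserting $\tau$-independence.
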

\begin{remark}
	Since the proposed numerical scheme \eqref{2order} is a three-level method, an initialization at the ``ghost" level $t^{-1}$ is required. For numerical convenience, we adopt the extrapolation $\phi^{-1}=\phi^{0}$ directly. Such a choice simplifies the energy stability analysis, and the second order temporal accuracy is still guaranteed.
\end{remark}

\section{Positivity-preserving analysis and energy stability estimate}
Given the underlying $H^{-1}$ gradient flow structure, the proposed numerical scheme inherently conserves mass, provided the solution exists. To ensure the well-posedness of the scheme, it is crucial to guarantee that the numerical solution remains within the physically admissible range at each grid point, specifically $-1<\phi_{i,j,k}^{n+1}<1$. This pointwise boundedness property will be rigorously verified in the following theorem. As a preparatory step, we now introduce several smooth auxiliary functions associated with the logarithmic potential $G(x)=x\ln x$, which will be instrumental in the subsequent analysis. For a fixed $a>0$ and for all $x>0$, define
\begin{equation*}
		H_a^1(x):=\frac{G(x)-G(a)}{x-a},\quad
		H_a^0(x):=\int_{a}^{x}H_a^1(t)\,\mathrm{d}t,\quad
		H_a^2(x):=(H_a^1)'(x)=(H_a^0)''(x).
\end{equation*}
Through straightforward computation, above functions can be shown to satisfy the following estimates.
\begin{lemma}\label{G prop}
	Suppose that $a>0$ is fixed.
	\begin{enumerate}[$1.$]
		\item $H_a^1(x)$ is an increasing function of $x$, and $H_a^1(x)\le H_a^1(a)=\ln a+1$, $\forall\ 0<x\le a$;
		\item $H_a^0(x)$ is a convex function of $x$ in the domain $[0,\infty)$;
		\item $H_a^2(x)=\frac{G'(x)(x-a)-(G(x)-G(a))}{(x-a)^2}=\frac{1}{2\tilde{\zeta}}$, for some $\tilde{\zeta}$ between $a$ and $x$, and $H_a^2(x)\ge 0$, $\forall\ x>0$.
	\end{enumerate}
\end{lemma}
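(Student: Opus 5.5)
The plan is a direct calculus verification built around one representation: the divided difference as an average of the derivative,
\[
H_a^1(x)=\int_0^1 G'(a+s(x-a))\,\mathrm{d}s=\int_0^1\bigl(1+\ln(a+s(x-a))\bigr)\,\mathrm{d}s ,
\]
valid for $x>0$, $x\neq a$. At $x=a$ we set $H_a^1(a)=G'(a)=\ln a+1$, which is the continuous extension.

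\textbf{Item 1.} The integrand is increasing in $x$ for each $s\in[0,1]$, because $\ln$ is increasing and $a+s(x-a)$ increases with $x$. Hence $H_a^1$ is increasing on $(0,\infty)$. For $0<x\le a$ this immediately gives $H_a^1(x)\le H_a^1(a)=\ln a+1$.

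\textbf{Item 3.} Differentiate the quotient to get the stated formula
\[
H_a^2(x)=\frac{G'(x)(x-a)-(G(x)-G(a))}{(x-a)^2}.
\]
Taylor-expand $G(a)$ about $x$ with Lagrange remainder:
\[
G(a)=G(x)+G'(x)(a-x)+\tfrac12 G''(\tilde\zeta)(a-x)^2,
\]
with $\tilde\zeta$ between $a$ and $x$. The numerator then equals $\tfrac12 G''(\tilde\zeta)(x-a)^2$, and since $G''(\zeta)=1/\zeta$ we obtain $H_a^2(x)=\frac{1}{2\tilde\zeta}>0$ for every $x>0$. At $x=a$ the representation gives $H_a^2(a)=\tfrac12 G''(a)=\tfrac{1}{2a}$, which is consistent with the formula with $\tilde\zeta=a$. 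This also reproves item 1.

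\textbf{Item 2.} By definition $(H_a^0)''=(H_a^1)'=H_a^2\ge0$ on $(0,\infty)$, so $H_a^0$ is convex there. To extend convexity to $[0,\infty)$, note that $H_a^1$ is bounded near $0$: as $x\to0^+$, $G(x)\to0$, so $H_a^1(x)\to G(a)/a=\ln a$. Therefore $H_a^0$ extends continuously to $x=0$. A function that is continuous on $[0,\infty)$ and convex on $(0,\infty)$ is convex on $[0,\infty)$, by passing to the limit in the convexity inequality.

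The only mildly delicate point is this endpoint behaviour at $0$ (and the removable singularity at $x=a$). Both are handled by the integral representation and the limit $H_a^1(0^+)=\ln a$.
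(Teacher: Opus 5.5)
Your proof is correct and takes essentially the route the paper intends. The paper justifies the lemma only by ``straightforward computation,'' and the form $H_a^2(x)=\frac{1}{2\tilde{\zeta}}$ in item 3 points to exactly the Lagrange-remainder Taylor expansion you carry out; your treatment of the removable point $x=a$ and of the continuous extension of $H_a^0$ to $x=0$ (via $H_a^1(0^+)=\ln a$) supplies details the paper leaves implicit.
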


The numerical scheme satisfies a positivity-preserving property, as formulated below.
\begin{theorem}\label{positive theo}
	Assume that the mobility function satisfies $\mathcal{M}(x)\geq\mathcal{M}_{0}>0$  for all $x\in[-1,1]$. Given $\phi^{k}\in\mathcal{V}_{\mathrm{per}}$ with $\|\phi^{k}\|_{\infty}< 1,\, k=n,\, n-1$ , and $\abs{\overline{\phi^{n}}}=\abs{\overline{\phi^{n-1}}}<1$. Then, there exists a unique solution $\phi^{n+1}\in\mathcal{V}_{\mathrm{per}}$ to \eqref{2order}, such that $\phi^{n+1}-\overline{\phi^{n}}\in\mathring{\mathcal{V}}_{\mathrm{per}}$ and $\|\phi^{n+1}\|_{\infty}<1$.
\end{theorem}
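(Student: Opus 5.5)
The plan is the standard variational argument of Chen--Wang--Wise type for logarithmic potentials: write \eqref{2order} as the Euler--Lagrange equation of a strictly convex discrete functional, and show that its minimizer cannot sit on the boundary of the admissible set. Set $\nu=\phi^{n+1}-\overline{\phi^{n}}\in\mathring{\mathcal{V}}_{\mathrm{per}}$. Mass conservation follows from taking the inner product of the first equation in \eqref{2order} with $1$ and using Lemma~\ref{sum by part}. The scheme is then equivalent to
\[
\frac{1}{\tau}\mathcal{L}_{\breve{\mathcal{M}}^{n+1/2}}^{-1}(\phi^{n+1}-\phi^{n}) + \mu^{n+\frac12} = C^\ast,
\]
with $C^\ast$ a constant. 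This is well defined because $\breve{\mathcal{M}}^{n+1/2}\ge\tau^3>0$.

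The functional I would use is
\begin{align*}
\mathcal{J}(\nu)={}&\frac{1}{2\tau}\|\nu+\overline{\phi^n}-\phi^n\|^2_{\mathcal{L}_{\breve{\mathcal{M}}^{n+1/2}}^{-1}}+\langle H^0_{1+\phi^n}(1+\phi),1\rangle_\Omega+\langle H^0_{1-\phi^n}(1-\phi),1\rangle_\Omega\\
&+\tau\langle G(1+\phi)+G(1-\phi)-\phi\ln(1+\phi^n)+\phi\ln(1-\phi^n),1\rangle_\Omega-\theta_0\langle\breve\phi^{n+\frac12},\phi\rangle_\Omega\\
&+\frac{3\varepsilon^2}{8}\|\nabla_h\phi\|_2^2+\frac{\varepsilon^2}{4}\langle\nabla_h\phi^{n-1},\nabla_h\phi\rangle+\frac{\sigma}{4}\|\phi-\overline{\phi^n}\|_{-1,h}^2+\frac{\sigma}{2}\langle(-\Delta_h)^{-1}(\phi^n-\overline{\phi^n}),\phi\rangle_\Omega,
\end{align*}
where $\phi=\nu+\overline{\phi^n}$, the $H^0$ are applied pointwise with $a=1\pm\phi^n_{i,j,k}$, and $\frac{d}{dx}G(x)=\ln x+1$. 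A direct check shows that its gradient in $\mathring{\mathcal{V}}_{\mathrm{per}}$ reproduces the equation above, up to the additive constant; the $\pm\tau$ terms coming from $G'$ cancel. By Lemma~\ref{G prop}(2), each $H^0$ term is convex. The $G$ terms are strictly convex, and the $\ell^2$-type terms are convex quadratics. Hence $\mathcal{J}$ is strictly convex on $A_h=\{\nu:\ -1<\nu+\overline{\phi^n}<1\}$, which is bounded and convex. Uniqueness is then immediate.

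For existence, I would work on the closed subset $A_{h,\delta}=\{\nu:\ -1+\delta\le\nu+\overline{\phi^n}\le1-\delta\}$. It is nonempty for small $\delta$ since $|\overline{\phi^n}|<1$, so $\mathcal{J}$ attains a minimizer $\nu^\star$ there. The key step is to show that for $\delta$ small enough, $\nu^\star$ does not touch the boundary. Suppose instead that $\phi^\star=\nu^\star+\overline{\phi^n}$ equals $-1+\delta$ at some grid point $\vec\alpha_0$, and attains its maximum at $\vec\alpha_1$, where $\phi^\star_{\vec\alpha_1}\ge\overline{\phi^n}$. I would take the directional derivative of $\mathcal{J}$ in the direction $\psi=\delta_{\vec\alpha_0}-\delta_{\vec\alpha_1}$, which must be $\le0$ at the minimizer when $t\to0^+$ moves inward.

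The main obstacle is the estimation of this derivative. The singular part is
\[
H^1_{1+\phi^n}(\delta)-H^1_{1-\phi^n}(2-\delta)+\tau(\ln\delta-\ln(2-\delta))
\]
at $\vec\alpha_0$, minus the corresponding quantity at $\vec\alpha_1$. By Lemma~\ref{G prop}(1), $H^1$ is increasing and bounded above by $\ln a+1$. The $H^1$ terms at $\vec\alpha_0$ are therefore controlled by $O(1)$ constants depending on $\|\phi^n\|_\infty<1$. The decisive term is $\tau\ln\delta\to-\infty$, which is exactly why the regularization is included, since $H^1_a(\delta)$ stays bounded as $\delta\to0$. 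At $\vec\alpha_1$, the quantities $1\pm\phi^\star$ are bounded below by $1-|\overline{\phi^n}|>0$, so those terms are bounded. The remaining terms must also be bounded independently of $\delta$:
\begin{itemize}
\item the $\mathcal{L}^{-1}_{\breve{\mathcal{M}}}$ term, via Lemma~\ref{lemma mob 2nd} (constants depending on $h$ and $\tau$);
\item the Laplacian term, through $|\Delta_h\phi|\le Ch^{-2}$;
\item the $(-\Delta_h)^{-1}$ terms and $\theta_0\breve\phi$, which are bounded by similar arguments.
\end{itemize}
The derivative is then $\le\tau\ln\delta+C(h,\tau)<0$ for $\delta$ small, which contradicts minimality. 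The upper bound $1-\delta$ is handled symmetrically. Therefore the minimizer lies in the interior, satisfies the Euler--Lagrange equation, and yields a solution with $\|\phi^{n+1}\|_\infty<1$.
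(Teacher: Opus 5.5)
Your proposal is correct and is essentially the paper's proof. It uses the same strictly convex functional, the same shrunken admissible set, the same two-point direction $\delta_{\vec\alpha_0}-\delta_{\vec\alpha_1}$, and the same mechanism in which $\tau\ln\delta\to-\infty$ dominates the $\delta$-independent bounded terms. Your $+\langle H^0_{1-\phi^n}(1-\phi),1\rangle_\Omega$ is the right sign; the minus in the paper's $\Gamma^n$ is a typo, as \eqref{eseq1_2nd} shows. The only cosmetic difference is that you bound $\Delta_h\phi^\star$ crudely by $Ch^{-2}$ rather than using its sign at the extrema.

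There are two small slips to fix:
\begin{itemize}
\item The first-order condition at a constrained minimizer along an inward direction is $\ge 0$, not $\le 0$.
\item At $\vec\alpha_1$ only $1+\phi^\star_{\vec\alpha_1}\ge 1-|\overline{\phi^n}|$ holds, while $1-\phi^\star_{\vec\alpha_1}$ can be as small as $\delta$. Instead of two-sided boundedness, use the one-sided bound $H^1_{1-\phi^n}(1-\phi^\star_{\vec\alpha_1})+\tau\ln(1-\phi^\star_{\vec\alpha_1})\le H^1_{1-\phi^n}(1-\overline{\phi^n})+\tau\ln(1-\overline{\phi^n})$. This follows, as in the paper, from monotonicity and $\phi^\star_{\vec\alpha_1}\ge\overline{\phi^n}$, and it is all the contradiction requires.
\end{itemize}
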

\begin{proof}
	If a valid numerical solution exists, it must satisfy the mass conservation identity, namely $\phi^{n+1}-\overline{\phi^{n}}\in\mathring{\mathcal{V}}_{\mathrm{per}}$. For the sake of simplicity, set $\gamma := \overline{\phi^{n}}$. The numerical solution of \eqref{2order} is a minimizer of the following strictly convex discrete energy functional
	\begin{equation}\label{convex functional_2nd}
		\begin{split}
			\Gamma^{n}(\phi)
			& := \frac{1}{2\tau}\|\phi-\phi^{n}\|_{\mathcal{L}_{\breve{\mathcal{M}}^{n+1/2}}^{-1}}^2
			+\left\langle{H_{1+\phi^{n}}^{0}(1+\phi),1}\right\rangle_{\Omega}
			-\left\langle{H_{1-\phi^{n}}^{0}(1-\phi),1}\right\rangle_{\Omega}
			+\frac{3{\varepsilon}^2}{8}\|\nabla_{h}\phi\|_2^2\\
			&\hspace{1cm}
			+\tau\big(\left\langle{1+\phi,\ln (1+\phi)}\right\rangle_{\Omega}
			+\left\langle{1-\phi,\ln (1-\phi)}\right\rangle_{\Omega}\big)
			+\frac{\sigma}{4}\|\phi-\bar{\phi}\|_{-1,h}^2
			+\left\langle{\phi,S^{n}}\right\rangle_{\Omega},
		\end{split}
	\end{equation}
	over the admissible set
	\begin{equation*}
	\mathcal{D}_{h} := \left\{
	\phi\in\mathcal{V}_{\mathrm{per}}|\, \|\phi\|_{\infty}\le 1,
	\, \left\langle{\phi-\gamma,1}\right\rangle_{\Omega}=0
	\right\}\subset\mathbb{R}^{N^3},
	\end{equation*}
	where $S^{n}$ absorbs historical time-level data:
	\begin{equation}\label{fn}
		S^{n}:=-\theta_{0}\breve{\phi}^{n+\frac{1}{2}}
		-\frac{\varepsilon^{2}}{4}\Delta_{h}\phi^{n-1}
		+\tau\left(-\ln (1+\phi^{n})+\ln (1-\phi^{n})\right)
		+\frac{\sigma}{2}(-\Delta_{h})^{-1}(\phi^{n}
		-\overline{\phi^{n}}).
	\end{equation}
	By introducing the zero-mean variable $\varphi=\phi-\gamma$, the functional \eqref{convex functional_2nd} is transformed into an equivalent form $\Xi^{n}(\varphi) := \Gamma^{n}(\varphi+\gamma)$, yields
	\begin{align*}
		\Xi^{n}(\varphi)
			&= \frac{1}{2\tau}\|\varphi+\gamma-\phi^{n}\|_{\mathcal{L}_{\breve{\mathcal{M}}^{n+1/2}}^{-1}}^2
			+\left\langle{H_{1+\phi^{n}}^{0}(1+\varphi+\gamma),1}\right\rangle_{\Omega}
			-\left\langle{H_{1-\phi^{n}}^{0}(1-\varphi-\gamma),1}\right\rangle_{\Omega}\\
			&\quad+\tau\big(
			\left\langle{1+\varphi+\gamma,\ln (1+\varphi+\gamma)}\right\rangle_{\Omega}
			+\left\langle{1-\varphi-\gamma,\ln (1-\varphi-\gamma)}\right\rangle_{\Omega}\big)\\
			&\quad+\frac{3{\varepsilon}^2}{8}\|\nabla_{h}\varphi\|_2^2
			+\frac{\sigma}{4}\|\varphi\|_{-1,h}^2
			+\left\langle{\varphi+\gamma,S^{n}}\right\rangle_{\Omega},   	
	\end{align*}
	defined on the corresponding space
	\[
	\mathring{\mathcal{D}}_{h} := \left\{\varphi\in\mathring{\mathcal{V}}_{\mathrm{per}}\,|\,
	                           -1-\gamma\le\varphi\le 1-\gamma\right\}\subset\mathbb{R}^{N^3}.
	\]
	The minimizers of $\Xi^{n}$ on $\mathring{\mathcal{D}}_{h}$ and $\Gamma^{n}$ on $\mathcal{D}_h$ are naturally in a one-to-one correspondence via $\phi=\varphi+\gamma$.
	
	To rigorously establish the existence of a minimizer, we introduce a regularized compact subset for a small parameter $\vartheta\in (0,1/2)$:
	\[
	\mathring{\mathcal{D}}_{h,\vartheta} 
	:= \left\{\varphi\in\mathring{\mathcal{V}}_{\mathrm{per}}\,|\, \vartheta-1-\gamma
	\le\varphi\le 1-\gamma-\vartheta\right\}\subset\mathbb{R}^{N^3}.
	\] 
	The finite dimensional boundedness, compactness, and convexity of $\mathring{\mathcal{D}}_{h,\vartheta}$ ensure that $\Xi^{n}$ possesses a global minimizer within this regularized domain. The key observation in the positivity-preserving analysis is that, for sufficiently small $\vartheta$, no minimizer lies on the boundary of $\mathring{\mathcal{D}}_{h,\vartheta}$. Here, the boundary is characterized by
	\begin{equation*}
		\partial\mathring{\mathcal{D}}_{h,\vartheta} 
		:= \left\{\psi\in\mathring{\mathcal{D}}_{h,\vartheta}\,|\,
		\|\psi + \gamma\|_{\infty} = 1 - \vartheta\right\}.
	\end{equation*}
	
	Suppose, for contradiction, that the minimization is achieved at a boundary profile $\varphi^{\star}\in\partial\mathring{\mathcal{D}}_{h,\vartheta}$. Then there exists a specific spatial grid location $\vec{a}=(a_1,a_2,a_3)$ such that $\abs{{\varphi}_{\vec{a}}^{\star}+\gamma}=1-\vartheta$. Without loss of generality, we assume ${\varphi}_{\vec{a}}^{\star}+\gamma=\vartheta-1$, implying that ${\varphi}^{\star}$ reaches its global minimum at $\vec{a}$. 
	Let $\vec{b}=(b_1,b_2,b_3)$ be another grid point at which ${\varphi}^{\star}$ attains its maximum. The zero-mean property $\overline{\varphi^{\star}}=0$ requires that ${\varphi}_{\vec{b}}^{\star}\ge 0$, which yields the following bound:
	\begin{equation}\label{alpha1beta0_2nd}
		1-\vartheta\ge{\varphi}_{\vec{b}}^{\star}+\gamma\ge\gamma.
	\end{equation}

   Select a specific direction $\psi\in\mathring{\mathcal{V}}_{\mathrm{per}}$ as
	\[
	\psi_{i,j,k}=\delta_{i,a_1}\delta_{j,a_2}\delta_{k,a_3}-\delta_{i,b_{1}}\delta_{j,b_2}\delta_{k,b_3},
	\]
	$\delta_{i,j}$ denotes the standard Kronecker delta function. Computing the directional derivative of $\Xi^{n}$ at $\varphi^{\star}$ along $\psi$ yields
	\begin{equation}\label{eseq1_2nd}
		\begin{split}
		&\frac{1}{h^3}d_s\Xi^{n}(\varphi^{\star}+s\psi)|_{s=0}\\
		=&\,\frac{1}{\tau}\big(
		\mathcal{L}_{\breve{\mathcal{M}}^{n+1/2}}^{-1}(\phi^{\star}-\phi^{n})_{\vec{a}}
		-\mathcal{L}_{\breve{\mathcal{M}}^{n+1/2}}^{-1}(\phi^{\star}-\phi^{n})_{\vec{b}}\big)
		+(S^{n}_{\vec{a}}-S^{n}_{\vec{b}})
		-\frac{3\varepsilon^{2}}{4}(\Delta_{h}\phi^{\star}_{\vec{a}}
		-\Delta_{h}\phi^{\star}_{\vec{b}})\\
		&+H_{1+\phi^{n}}^{1}(1+\phi^{\star}_{\vec{a}})
		-H_{1-\phi^{n}}^{1}(1-\phi^{\star}_{\vec{a}})
		-H_{1+\phi^{n}}^{1}(1+\phi^{\star}_{\vec{b}})
		+H_{1-\phi^{n}}^{1}(1-\phi^{\star}_{\vec{b}})\\
		&+\tau\Big(\ln(1+\phi^{\star}_{\vec{a}})
		-\ln(1-\phi^{\star}_{\vec{a}})
		-\ln(1+\phi^{\star}_{\vec{b}})
		+\ln(1-\phi^{\star}_{\vec{b}})\Big)\\
		&+\frac{\sigma}{2}(-\Delta_{h})^{-1}(\phi^{\star}-\gamma)_{\vec{a}}
		-\frac{\sigma}{2}(-\Delta_{h})^{-1}(\phi^{\star}-\gamma)_{\vec{b}}.
		\end{split}
	\end{equation}
	In \eqref{eseq1_2nd}, a rewritten form $\phi^{\star}:=\varphi^{\star}+\gamma$ has been used.
	By the fact that $\phi_{\vec{a}}^{\star}=-1+\vartheta$ and \eqref{alpha1beta0_2nd}, the logarithmic differences are bounded by
	\begin{equation}\label{eq1_2nd}
		\ln(1+\phi^{\star}_{\vec{a}})-\ln(1-\phi^{\star}_{\vec{a}})
		-\ln(1+\phi^{\star}_{\vec{b}})+\ln(1-\phi^{\star}_{\vec{b}})
		\le\ln\frac{\vartheta}{2-\vartheta}-\ln\frac{1+\gamma}{1-\gamma}.
	\end{equation}
	By Lemma \ref{G prop}, the derivative $H_{a}^1(x)$ is strictly monotonically increasing with respect to $x$ for any fixed parameter $a>0$. This guarantees the following array of inequalities:
	\begin{equation}
		\begin{aligned}
			&H_{1+\phi^{n}}^{1}(1+\phi^{\star}_{\vec{a}})
			=H_{1+\phi^{n}}^{1}(\vartheta)
			\le H_{1+\phi^{n}}^{1}(1+\phi^{n})
			=\ln (1+\phi^{n})+1,\\
			&H_{1-\phi^{n}}^{1}(1-\phi^{\star}_{\vec{a}})
			=H_{1-\phi^{n}}^{1}(2-\vartheta)\ge H_{1-\phi^{n}}^{1}(1),\\
			&H_{1+\phi^{n}}^{1}(1+\phi^{\star}_{\vec{b}})
			\ge	H_{1+\phi^{n}}^{1}(1+\gamma),\qquad
			H_{1-\phi^{n}}^{1}(1-\phi^{\star}_{\vec{b}})
			\le	H_{1-\phi^{n}}^{1}(1-\gamma).
		\end{aligned}
	\end{equation}
	Furthermore, the extreme value properties of $\phi^{\star}$ at $\vec{a}$ and $\vec{b}$ imply that the discrete Laplacian satisfies
	\begin{equation}\label{1.3}
		\Delta_{h}\phi^{\star}_{\vec{a}}\ge0,\qquad
		\Delta_{h}\phi^{\star}_{\vec{b}}\le 0.
	\end{equation}
	Owing to the uniform bound $\|\phi^{n}\|_{\infty}<1$ and the finiteness of grid, it is reasonable to assume that there exists a constant $\vartheta_{0}>0$, depending on $t_n$, such that
	\begin{equation*}
		-1+\vartheta_{0}\le\phi^{n}_{i,j,k}\le 1-\vartheta_{0},\quad
		\forall\, 1\le i, j, k\le N.
	\end{equation*}
	 Referring back to \eqref{fn}, we then derive
	\begin{equation*}
		\begin{split}
		\abs{S^{n}}
		&\le \theta_{0}\abs{\breve{\phi}^{n+\frac{1}{2}}}
		+\frac{\varepsilon^2}{4}\abs{\Delta_{h}\phi^{n-1}}
		+\tau\bigl(\abs{\ln (1+\phi^{n})}+\abs{\ln (1-\phi^{n})}\bigr)
		+\frac{\sigma}{2}\abs{(-\Delta_{h})^{-1}(\phi^{n}-\bar{\phi}^{n})}\\
		&\le 2\theta_{0}+\frac{3\varepsilon^2}{h^2}
		+2\tau\abs{\ln \vartheta_{0}}+\frac{\sigma}{2}\tilde{C_2}.
		\end{split}           
	\end{equation*}
	Therefore, it follows pointwise that
	\begin{equation}
		-4\theta_{0}-6\frac{\varepsilon^2}{h^2}-4\tau \abs{\ln \vartheta_0}-\tilde{C_2}\sigma
		\le S^{n}_{\vec{a}}-S^{n}_{\vec{b}}
		\le 4\theta_{0}+6\frac{\varepsilon^2}{h^2}+4\tau \abs{\ln \vartheta_0}+\tilde{C_2}\sigma.
	\end{equation}	
	For the last four terms appearing in \eqref{eseq1_2nd}, an application of Lemma \ref{lemma mob 2nd} reveals that
	\begin{align}
		-2\tilde{C_1}&\le	\mathcal{L}_{\breve{\mathcal{M}}^{n+1/2}}^{-1}(\phi^{\star}-\phi^{n})_{\vec{a}}-\mathcal{L}_{\breve{\mathcal{M}}^{n+1/2}}^{-1}(\phi^{\star}-\phi^{n})_{\vec{b}}\le 2\tilde{C_1},\label{eq4_2nd}\\
		-\tilde{C_{2}}\sigma&\le\frac{\sigma}{2}(-\Delta_{h})^{-1}(\phi^{\star}-\gamma)_{\vec{a}}-\frac{\sigma}{2}(-\Delta_{h})^{-1}(\phi^{\star}-\gamma)_{\vec{b}}\le \tilde{C_{2}}\sigma.\label{eq5_2nd}
	\end{align}
	Consequently, a substitution of \eqref{eq1_2nd} through \eqref{eq5_2nd} into \eqref{eseq1_2nd} yields the following bound on the directional derivative:
	\begin{equation*}
		\begin{aligned}
		&\frac{1}{h^3}d_s\Xi^{n}(\varphi^{\star}+s\psi)|_{s=0}
		\le \tau(\ln \frac{\vartheta}{2-\vartheta}-\ln \frac{1+\gamma}{1-\gamma})+J_0,\\
		&J_0:= \ln(1+\phi^{n})+1
		-H_{1-\phi^{n}}^{1}(1)-H_{1+\phi^{n}}^{1}(1+\gamma)
		+H_{1-\phi^{n}}^{1}(1-\gamma)+4\theta_{0}+2\tilde{C_2}\sigma\\
		&\qquad\,+2\tilde{C_1}\tau^{-1}+6\varepsilon^{2}h^{-2}
		+4\tau \,\abs{\ln \vartheta_{0}}.
	   \end{aligned}
	\end{equation*}
	It is worth noting that although $J_0$ exhibits singular behavior as $\tau,\, h\to 0$, it behaves as a finite constant for any fixed $\tau$ and $h$. This property guarantees that one can select an exceptionally small $\vartheta\in(0,1/2)$ such that
	\begin{equation*}
		d_s\Xi^{n}(\varphi^{\star}+s\psi)|_{s=0}<0.
	\end{equation*}
	This contradicts the earlier assumption that $\Xi^{n}$ attains a minimum at $\varphi^{\star}$, as the directional derivative is negative in a direction pointing into the interior of $\mathring{\mathcal{D}}_{h,\vartheta}$.
	
	An analogous analysis eliminates the possibility of the global minimum occurring at a boundary point characterized by $ \varphi_{\vec{a}}^{\star}+\gamma=1-\vartheta$. Consequently, the global minimizer of $\Xi^{n}$ over $\mathring{\mathcal{D}}_{h,\vartheta}$ is strictly contained within the interior region $(\mathring{\mathcal{D}}_{h,\vartheta})^{o}\subset(\mathring{\mathcal{D}}_{h})^{o}$. By mapping this interior optimizer back via $\phi=\varphi+\gamma$, we confirm the existence of a solution $\phi \in \mathcal{D}_h$ for \eqref{2order}. 
	
	Uniqueness is derived directly from the strict convexity of the discrete functional $\Gamma^{n}$ on the admissible set $\mathcal{D}_{h}$. This completes the proof.
	
\end{proof}

We now proceed to the energy stability analysis of the proposed scheme \eqref{2order}. The discrete energy $\mathcal{E}_{h}(\phi):\mathcal{V}_{\mathrm{per}}\to\mathbb{R}$ is defined as
\begin{equation*}
	\mathcal{E}_{h}(\phi)=\left\langle{1+\phi,\ln(1+\phi)}\right\rangle_{\Omega}+\left\langle{1-\phi,\ln(1-\phi)}\right\rangle_{\Omega}-\frac{\theta_{0}}{2}\|\phi\|_2^2+\frac{{\varepsilon}^2}{2}\|\nabla_{h}\phi\|_2^2+\frac{\sigma}{2}\|\phi-\bar{\phi}\|_{-1,h}^2.
\end{equation*}
\begin{theorem}\label{dis ener theo}
	The second order numerical scheme \eqref{2order} satisfies a discrete energy dissipation law:
	\begin{equation}\label{modi energy 2nd}
		\tilde{\mathcal{E}}_{h}(\phi^{n+1},\phi^{n})\le \tilde{\mathcal{E}}_{h}(\phi^{n},\phi^{n-1}),
	\end{equation}
	where the modified energy functional is defined as
	\begin{equation*}
		\tilde{\mathcal{E}}_{h}(\phi,\psi):=\mathcal{E}_{h}(\phi)+\frac{\theta_{0}}{4}\|\phi-\psi\|_2^2
		                      +\frac{\varepsilon^2}{8}\|\nabla_{h}(\phi-\psi)\|_2^2.
	\end{equation*}
	As a direct consequence,
	\begin{equation*}
		 \mathcal{E}_{h}(\phi^{m})
		\le\tilde{\mathcal{E}}_{h}(\phi^{m},\phi^{m-1})
		\le\cdots
		\le\tilde{\mathcal{E}}_{h}(\phi^{1},\phi^{0})
		\le\tilde{\mathcal{E}}_{h}(\phi^{0},\phi^{-1})
		=\mathcal{E}_{h}(\phi^{0})\le C_{3},\quad \forall m\in \mathbb{N}.
	\end{equation*}
	 $C_{3}>0$ is a constant independent of $h$.
\end{theorem}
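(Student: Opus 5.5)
The plan is to take the discrete inner product of the first equation of \eqref{2order} with $\tau\mu^{n+\frac{1}{2}}$ and of the second equation with $\phi^{n+1}-\phi^{n}$, and then subtract. By Lemma \ref{sum by part}, the left side gives
\[
\langle \phi^{n+1}-\phi^{n},\mu^{n+\frac12}\rangle_\Omega=-\tau[\breve{\mathcal{M}}^{n+\frac12}\nabla_h\mu^{n+\frac12},\nabla_h\mu^{n+\frac12}]_\Omega\le 0,
\]
using $\breve{\mathcal{M}}^{n+1/2}\ge\tau^3>0$. Theorem \ref{positive theo} guarantees $\|\phi^{k}\|_\infty<1$, so every logarithmic term is well defined. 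The remaining work is to bound $\langle \phi^{n+1}-\phi^{n},\mu^{n+\frac12}\rangle_\Omega$ from below by $\tilde{\mathcal{E}}_h(\phi^{n+1},\phi^n)-\tilde{\mathcal{E}}_h(\phi^n,\phi^{n-1})$, term by term.

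\textbf{Singular terms.} The modified CN quotients give exactly
\[
\langle G(1+\phi^{n+1})-G(1+\phi^n),1\rangle_\Omega+\langle G(1-\phi^{n+1})-G(1-\phi^n),1\rangle_\Omega,
\]
since the factor $\phi^{n+1}-\phi^n$ cancels pointwise. At grid points where $\phi^{n+1}=\phi^n$ the quotient is interpreted as the derivative, and the product is zero there as well. This is precisely the change in the logarithmic part of $\mathcal{E}_h$. The $\tau$-regularization produces
\[
\tau\langle \phi^{n+1}-\phi^n,\ (\ln(1+\phi^{n+1})-\ln(1-\phi^{n+1}))-(\ln(1+\phi^{n})-\ln(1-\phi^{n}))\rangle_\Omega\ge0,
\]
which is nonnegative because $x\mapsto\ln(1+x)-\ln(1-x)$ is increasing. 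This term can therefore be dropped.

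\textbf{Nonlocal term.} Mass conservation gives $\overline{\phi^{n+1}}=\overline{\phi^n}$. The term then equals $\frac{\sigma}{2}\langle \phi^{n+1}-\phi^n,(-\Delta_h)^{-1}((\phi^{n+1}-\bar\phi)+(\phi^n-\bar\phi))\rangle_\Omega$, which is the difference of squares $\frac{\sigma}{2}(\|\phi^{n+1}-\bar\phi\|_{-1,h}^2-\|\phi^n-\bar\phi\|_{-1,h}^2)$ by symmetry of $(-\Delta_h)^{-1}$. No extra term appears here.

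\textbf{Quadratic terms.} This is where the modified energy arises and is the main bookkeeping step. Write $a=\phi^{n+1}$, $b=\phi^n$, $c=\phi^{n-1}$.

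For the concave term, $-\theta_0\langle a-b,\frac32 b-\frac12 c\rangle_\Omega$ should be compared with $-\frac{\theta_0}{2}(\|a\|^2-\|b\|^2)+\frac{\theta_0}{4}(\|a-b\|^2-\|b-c\|^2)$. Writing $\frac32b-\frac12c=\frac{a+b}{2}-\frac{a-b}{2}+\frac{b-c}{2}$, one obtains
\[
-\theta_0\langle a-b,\tfrac32 b-\tfrac12 c\rangle_\Omega=-\tfrac{\theta_0}{2}(\|a\|^2-\|b\|^2)+\tfrac{\theta_0}{2}\|a-b\|^2-\tfrac{\theta_0}{2}\langle a-b,b-c\rangle_\Omega .
\]
Cauchy--Schwarz gives $-\frac{\theta_0}{2}\langle a-b,b-c\rangle_\Omega\ge-\frac{\theta_0}{4}(\|a-b\|^2+\|b-c\|^2)$, which yields the desired lower bound with $\frac{\theta_0}{4}\|a-b\|_2^2-\frac{\theta_0}{4}\|b-c\|_2^2$.

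For the surface term, $\varepsilon^2\langle \nabla_h(a-b),\nabla_h(\frac34a+\frac14c)\rangle$ is handled by writing $\frac34a+\frac14c=\frac{a+b}{2}+\frac14(a-b)-\frac14(b-c)$. The analogous computation gives
\[
\tfrac{\varepsilon^2}{2}(\|\nabla_ha\|^2-\|\nabla_hb\|^2)+\tfrac{\varepsilon^2}{4}\|\nabla_h(a-b)\|^2-\tfrac{\varepsilon^2}{4}[\nabla_h(a-b),\nabla_h(b-c)]_\Omega ,
\]
and the cross term is bounded below by $-\frac{\varepsilon^2}{8}(\|\nabla_h(a-b)\|^2+\|\nabla_h(b-c)\|^2)$.

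Summing all contributions gives $\tilde{\mathcal{E}}_h(\phi^{n+1},\phi^n)-\tilde{\mathcal{E}}_h(\phi^n,\phi^{n-1})\le -\tau[\breve{\mathcal{M}}\nabla_h\mu,\nabla_h\mu]_\Omega\le0$, which is \eqref{modi energy 2nd}.

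\textbf{Chain of inequalities.} Iterate \eqref{modi energy 2nd}. Then use the nonnegativity of the added quadratic terms to get $\mathcal{E}_h(\phi^m)\le\tilde{\mathcal{E}}_h(\phi^m,\phi^{m-1})$, and use $\phi^{-1}=\phi^0$ to get $\tilde{\mathcal{E}}_h(\phi^0,\phi^{-1})=\mathcal{E}_h(\phi^0)$. Finally, $\mathcal{E}_h(\phi^0)\le C_3$ independently of $h$ follows from taking $\phi^0$ as the grid restriction of smooth initial data with values in $(-1,1)$: the discrete norms and the discrete $H^{-1}$ norm are bounded by their continuous counterparts up to $O(h^2)$ consistency.
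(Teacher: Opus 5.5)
Your proposal is correct and follows essentially the same route as the paper. You test the scheme with $\mu^{n+\frac12}$, use the exact telescoping of the modified Crank--Nicolson logarithmic quotients, use monotonicity for the $\tau$-regularization, use the difference-of-squares identity for the nonlocal term, and apply the same Cauchy--Schwarz splitting of the $\theta_0$ and $\varepsilon^2$ cross terms that produces the $\frac{\theta_0}{4}$ and $\frac{\varepsilon^2}{8}$ corrections. Your extra remarks fill in details the paper leaves implicit: how to read the quotient at points where $\phi^{n+1}=\phi^n$, and why $\mathcal{E}_h(\phi^0)$ has an $h$-independent bound.
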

\begin{proof}
	Taking an inner product with \eqref{2order} by $\mu^{n+\frac{1}{2}}$ yields
	\begin{equation}\label{energy decay theory 2nd}
		  \left\langle{\phi^{n+1}-\phi^{n},\mu^{n+\frac{1}{2}}}\right\rangle_{\Omega}
			=-\tau\left[\breve{\mathcal{M}}^{n+\frac{1}{2}}\nabla_{h}\mu^{n+\frac{1}{2}},
			\nabla_{h}\mu^{n+\frac{1}{2}}\right]_{\Omega}
			\le -\tau^4\,\|\nabla_{h}\mu^{n+\frac{1}{2}}\|_2^2\le 0.
	\end{equation}
	On the other hand, the inner product of the discrete temporal derivative with the modified CN part precisely leads to the corresponding nonlinear energy difference,
	\begin{equation}\label{CN energy es 2nd}
			\left\langle{\phi^{n+1}-\phi^{n},
				\frac{G(1\pm\phi^{n+1})-G(1\pm\phi^{n})}{\phi^{n+1}-\phi^{n}}
			}\right\rangle_{\Omega}
			=\left\langle{1\pm\phi^{n+1},\ln (1\pm\phi^{n+1})}\right\rangle_{\Omega}
			-\left\langle{1\pm\phi^{n},\ln (1\pm\phi^{n})}\right\rangle_{\Omega}.
	\end{equation}
    For the expansive and surface diffusion terms, the following estimates could be derived:
    	\begin{align}
    		&\quad\left\langle{\phi^{n+1}-\phi^{n},
    			-\breve{\phi}^{n+\frac{1}{2}}}\right\rangle_{\Omega}\nonumber\\
    		&=\left\langle{\phi^{n+1}-\phi^{n},-\phi^{n}}\right\rangle_{\Omega}
    		-\frac{1}{2}\left\langle{\phi^{n+1}-\phi^{n},\phi^{n}-\phi^{n-1}}\right\rangle_{\Omega}\\
    		&\ge-\frac{1}{2}\left(\|\phi^{n+1}\|_2^2-\|\phi^{n}\|_2^2\right)
    		+\frac{1}{4}\left(\|\phi^{n+1}-\phi^{n}\|_2^2-\|\phi^{n}-\phi^{n-1}\|_2^2\right),\nonumber\\
    		&\quad\left\langle{\phi^{n+1}-\phi^{n},
    			-\Delta_{h}\hat{\phi}^{n+\frac{1}{2}}}\right\rangle_{\Omega}\nonumber\\
    		&=\left\langle{\nabla_{h}(\phi^{n+1}-\phi^{n}),
    			\frac{1}{2}\nabla_{h}\left(\phi^{n+1}+\phi^{n}\right)
    			+\frac{1}{4}\nabla_{h}\left(\phi^{n+1}-2\phi^{n}+\phi^{n-1}\right)
    		}\right\rangle_{\Omega}\\
    		&\ge\frac{1}{2}\left(\|\nabla_{h}\phi^{n+1}\|_2^2-\|\nabla_{h}\phi^{n}\|_2^2\right)
    		+\frac{1}{8}\left(\|\nabla_{h}(\phi^{n+1}-\phi^{n})\|_2^2-\|\nabla_{h}(\phi^{n}-\phi^{n-1})\|_2^2\right).\nonumber
    	\end{align}
    Analysis of the nonlinear artificial regularization term utilizes the convexity property of the logarithmic function,
    \begin{align}
    		\left\langle{\phi^{n+1}-\phi^{n},
    			\ln (1+\phi^{n+1})-\ln (1+\phi^{n})}\right\rangle_{\Omega}\ge 0,\\
    		\left\langle{\phi^{n+1}-\phi^{n},
    			-\ln (1-\phi^{n+1})+\ln (1-\phi^{n})}\right\rangle_{\Omega}\ge 0.
    \end{align}
    Noting that $\phi^{n+1}-\overline{\phi^{n+1}}\in\mathring{\mathcal{V}}_{\mathrm{per}}$ and that $\overline{\phi^{n+1}}=\overline{\phi^{n}}$ due to the mass conservation, we obtain
    \begin{equation*}
    	\begin{split}
    		&\left\langle{\phi^{n+1}-\phi^{n},(-\Delta_{h})^{-1}(\phi^{n+1}-\overline{\phi^{n+1}})}\right\rangle_{\Omega}\\
    		=&\left\langle{(\phi^{n+1}-\overline{\phi^{n+1}})-(\phi^{n}-\overline{\phi^{n}}),(-\Delta_{h})^{-1}(\phi^{n+1}-\overline{\phi^{n+1}})}\right\rangle_{\Omega}\\
    		=&\frac{1}{2}\left(\|\phi^{n+1}-\overline{\phi^{n+1}}\|_{-1,h}^{2}-\|\phi^{n}-\overline{\phi^{n}}\|_{-1,h}^{2}+\|(\phi^{n+1}-\overline{\phi^{n+1}})-(\phi^{n}-\overline{\phi^{n}})\|_{-1,h}^{2}\right).
    	\end{split}
    \end{equation*}
    An analogous argument for the $\phi^{n}$ gives,
        \begin{equation*}
    	\begin{split}
    		&\left\langle{\phi^{n+1}-\phi^{n},(-\Delta_{h})^{-1}(\phi^{n}-\overline{\phi^{n}})}\right\rangle_{\Omega}\\
    		=&\frac{1}{2}\left(\|\phi^{n+1}-\overline{\phi^{n+1}}\|_{-1,h}^{2}-\|\phi^{n}-\overline{\phi^{n}}\|_{-1,h}^{2}-\|(\phi^{n+1}-\overline{\phi^{n+1}})-(\phi^{n}-\overline{\phi^{n}})\|_{-1,h}^{2}\right).
    	\end{split}
    \end{equation*}
    We therefore obtain the following equality for the nonlocal term:
     \begin{equation}\label{nonlocal energy es 2nd}
    		\left\langle{\phi^{n+1}-\phi^{n},\sigma(-\Delta_{h})^{-1}\left(\phi^{n+\frac{1}{2}}-\overline{\phi^{n+\frac{1}{2}}}\right)}\right\rangle_{\Omega}
    		=\frac{\sigma}{2}\|\phi^{n+1}-\overline{\phi^{n+1}}\|_{-1,h}^{2}
    		-\frac{\sigma}{2}\|\phi^{n}-\overline{\phi^{n}}\|_{-1,h}^{2}.
    \end{equation}
    Finally, a substitution of \eqref{CN energy es 2nd} - \eqref{nonlocal energy es 2nd} into \eqref{energy decay theory 2nd} yields \eqref{modi energy 2nd}, so that the modified energy stability is proved.
\end{proof}

\section{Optimal rate convergence analysis}
Now we proceed into the rigorous convergence analysis of the proposed second order scheme. Let $\Phi$ be the exact solution of the NCH system \eqref{pdeeq}. With sufficiently regular initial data, it is reasonable to assume that $\Phi$ has regularity of class $\mathcal{R}$:
\begin{equation*}
	\Phi\in\mathcal{R} := H^4(0,T;C_{\mathrm{per}}(\Omega))\cap H^3(0,T;C_{\mathrm{per}}^{2}(\Omega))\cap L^{\infty}(0,T;C_{\mathrm{per}}^{8}(\Omega)).
\end{equation*}
Moreover, the following separation property is valid for the exact solution:
\begin{equation}\label{sep for exact}
	1+\Phi\ge\epsilon_{0},\quad 1-\Phi\ge\epsilon_{0}, \quad \text{for some}\ \epsilon_{0}>0 \; \text{at a point-wise level}.
\end{equation}
It is worth noting that directly restricting or interpolating the continuous exact solution onto the spatial grid points fails to preserve mass conservation, whereas projecting it into the Fourier spectral space perfectly preserves this nature. Driven by this essential structural consistency, we introduce $\Phi_{N}(\cdot,t) := \mathcal{P}_{N}\Phi(\cdot,t)$, which represents the standard Fourier projection of the exact solution $\Phi$ into the finite dimensional trigonometric polynomial subspace $\mathfrak{B}^{K}$ with $N=2K+1$. The standard projection estimate holds: if $\Phi\in L^{\infty}(0,T;H^{l}_{\mathrm{per}}(\Omega))$, $l\in\mathbb{N}$, then for any $0\le m\le l$,
\begin{equation}\label{standard estimate}
	\|\Phi_{N}-\Phi\|_{L^{\infty}(0,T;H^{m})}\le Ch^{l-m}\|\Phi\|_{L^{\infty}(0,T;H^{l})}.
\end{equation}
Although the Fourier projection operator does not intrinsically inherit the positivity of $1\pm\Phi_{N}$, an appropriate refinement of the spatial grid size $h$ allows us to enforce the separation property:
\begin{equation}
	1+\Phi_{N}\ge\frac{3}{4}\epsilon_{0},\quad
	1-\Phi_{N}\ge\frac{3}{4}\epsilon_{0}.
\end{equation}

Let $\Phi_{N}^{n}=\Phi_{N}(\cdot,t_n)$ with $t_n=n\tau$, and define $\phi_{N}^{n} := \mathcal{P}_{h}\Phi_{N}(\cdot,t_{n})$ as the corresponding collocated grid point values, such that $(\phi^{n}_{N})_{i,j,k}=\Phi_{N}(x_{i},y_{j},z_{k},t=t_{n})$. Mass conservation is preserved at the continuous level:
\begin{equation}\label{mc}
	\int_{\Omega}\Phi_{N}(\cdot,t_n)\,\mathrm{d}\mathbf{x}=\int_{\Omega}\Phi(\cdot,t_n)\,\mathrm{d}\mathbf{x}=\int_{\Omega}\Phi(\cdot,t_{n-1})\,\mathrm{d}\mathbf{x}=\int_{\Omega}\Phi_{N}(\cdot,t_{n-1})\,\mathrm{d}\mathbf{x},\quad\forall n\in\mathbb{N_{+}}.
\end{equation}
On the other hand, the numerical solution \eqref{2order} is also mass conservative at the discrete level $\overline{\phi^{n+1}}=\overline{\phi^{n}}$.
By virtue of the fact that $\Phi_{N}$ belongs to $\mathfrak{B}^{K}$, it always holds that
\[
\int_{\Omega}\Phi_{N}(\cdot,t_{n})\,\mathrm{d}\mathbf{x}=h^{3}\sum\limits_{i,j,k}\Phi_{N}(x_{i},y_{j},z_{k},t_{n})=h^{3}\sum\limits_{i,j,k}(\phi_{N}^{n})_{i,j,k}.
\] 
Therefore, the mass conservation property $\overline{\phi_{N}^{n}}=\overline{\phi_{N}^{n-1}}$ holds at the discrete level. By specifying the initial data as $\phi^{0}=\phi^{0}_{N}=\mathcal{P}_{h}\Phi_{N}(\cdot,t=0)$, we can define the numerical error grid function
\[
e^{n} := \phi_{N}^{n}-\phi^{n},\quad \forall  n\in \mathbb{N}.
\]
It follows that $\overline{e^{n}}=0$, so the discrete $\|\cdot\|_{-1,h}$ norm is well defined for $e^{n}$.

We are now in a position to state the main convergence theorem.

\begin{theorem}\label{error analysis 2order}
	Assume that the mobility satisfies $0<\mathcal{M}_{0}\le\mathcal{M}(x)\le\mathcal{M}_{1}<\infty$, $ \abs{\mathcal{M}'(x)}\le M$, $\forall x\in[-1,1]$, for some positive constants $\mathcal{M}_{0},\,\mathcal{M}_{1}$ and $M$. Given initial data $\Phi(\cdot,t=0)\in\mathcal{V}_{\mathrm{per}}^{6}(\Omega)$, and suppose the exact solution of the NCH equation \eqref{pdeeq} belongs to the regularity class $\mathcal{R}$. If $\tau$ and $h$ are sufficiently small and satisfy the linear refinement requirement $C_{1}h\le\tau\le C_{2}h$, then for all positive integers $n$ with $t_{n}=n\tau\le T$, we have
	\begin{equation}\label{err es 2nd}
		\|\nabla_{h} e^{n}\|_{2}+\Big(\frac{\mathcal{M}_{0}\varepsilon^2}{24}\tau\sum_{m=1}^{n}\|\nabla_{h}\Delta_{h}e^{m}\|_{2}^{2} \Big)^{1/2}\le C(\tau^{2}+h^{2}).
	\end{equation}   
	In \eqref{err es 2nd}, constant $C>0$ is independent of $n,\,\tau$ and $h$.
\end{theorem}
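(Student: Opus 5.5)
The plan follows the framework announced in the introduction: a higher-order consistency construction, a rough error estimate to obtain $\ell^\infty$ control, and then a refined energy estimate in the $H^1_h$ norm. The error is tested against $-\Delta_h e^{n+1}$-type quantities, and the variable mobility is handled through a priori bounds.

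\textbf{Step 1: higher-order consistency.} Substituting $\phi_N^n$ into \eqref{2order} leaves only an $O(\tau^2+h^2)$ truncation error. That is too weak for the rough estimate, because the inverse inequality $\|e\|_\infty\le Ch^{-1/2}\|e\|_{H^1_h}$ consumes powers of $h$. So I would construct a corrected profile
\[
\check\Phi=\Phi_N+\tau^2\Phi_{\tau,1}+\tau^3\Phi_{\tau,2}+h^2\Phi_{h,1}+h^4\Phi_{h,2}\ (\text{schematically}).
\]
Each correction field solves a linearized NCH equation around $\Phi$, whose mobility is $\mathcal{M}(\Phi)$ plus its derivative terms. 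The source terms come from the Taylor remainders of the modified CN quotients, the extrapolations $\breve\phi,\hat\phi,\breve{\mathcal{M}}$, the $\tau$-regularization term, and the centered-difference remainders. Choosing the fields successively makes the discrete truncation error of $\check\phi$ of size $O(\tau^4+h^4)$.

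The correction fields are smooth with bounded derivatives, and they have zero mean, so mass conservation is kept. Since $\check\phi$ is $O(\tau^2+h^2)$-close to $\Phi_N$, it inherits the separation property, say $1\pm\check\phi\ge\epsilon_0/2$. The $\tau^6$ term in \eqref{mob regu} changes $\breve{\mathcal M}$ only by $O(\tau^6)$ and is negligible. I then work with $\tilde e^n=\check\phi^n-\phi^n$ and recover the bound for $e^n$ by the triangle inequality at the end.

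\textbf{Step 2: induction with an a priori assumption.} Assume $\|\tilde e^k\|_{H^1_h}+\|\tilde e^k\|_\infty$-type bounds of order $\tau^{15/4}+h^{15/4}$ for $k\le n$. By the inverse inequality and $\tau\simeq h$, this gives $\|\tilde e^k\|_\infty\le C(\tau+h)\ll1$. Consequently $\phi^n$ and $\phi^{n-1}$ are separated from $\pm1$ by $\epsilon_0/4$. The extrapolated mobility $\breve{\mathcal M}^{n+1/2}$ then lies in $[\mathcal M_0/2,2\mathcal M_1]$, and its difference from the exact-profile mobility satisfies $|\breve{\mathcal M}_{check}-\breve{\mathcal M}|\le M(|\tilde e^n|+|\tilde e^{n-1}|)$ pointwise.

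The error equation then reads
\[
\frac{\tilde e^{n+1}-\tilde e^n}{\tau}=\nabla_h\cdot(\breve{\mathcal M}\nabla_h\tilde\mu)+\nabla_h\cdot\big((\check{\mathcal M}-\breve{\mathcal M})\nabla_h\check\mu\big)+\zeta^n .
\]
Here $\nabla_h\check\mu$ is bounded in $W^{1,\infty}_h$ by the regularity of the constructed profile.

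\textbf{Step 3: rough estimate at $t^{n+1}$.} The delicate point is that $\phi^{n+1}$ is not yet known to be separated, so the nonlinear log-difference quotients cannot yet be Lipschitz-controlled. To handle this, I take the inner product with $\tilde\mu^{n+1/2}$, or equivalently with $\tilde e^{n+1}$ in the $\mathcal{L}^{-1}$ sense. The monotonicity of the $H^1_a$ quotients (Lemma~\ref{G prop}) and of the $\tau$-regularization log terms then gives nonnegative contributions, and these need no separation at all. The remaining terms are bounded by the induction hypothesis. This yields a crude bound of the form $\|\tilde e^{n+1}\|_{H^1_h}\le C(\tau^{15/4}+h^{15/4})\tau^{-1/2}$ or similar.

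By the inverse inequality, $\|\tilde e^{n+1}\|_\infty\le Ch^{-1/2}\|\tilde e^{n+1}\|_{H^1_h}\le C(\tau+h)^{11/4}$, which is small. Hence $\phi^{n+1}$ is also separated, and all the log terms become Lipschitz with constants depending on $\epsilon_0$.

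\textbf{Step 4: refined estimate and closing.} I take the inner product of the error equation with $-\Delta_h\tilde\mu$-compatible test functions, namely with $-\Delta_h(\tilde e^{n+1}-\tilde e^n)$ paired through $\mathcal{L}^{-1}$, or more simply by applying $\nabla_h\cdot$ and testing with $\Delta_h\tilde e^{n+1/2}$-type combinations. The surface diffusion term gives $\frac{\mathcal M_0\varepsilon^2}{2}\|\nabla_h\Delta_h\hat{\tilde e}\|_2^2$ on the left.

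The remaining terms are handled as follows.
\begin{itemize}
\item The logarithmic, $\theta_0$ and nonlocal terms are bounded through Lipschitz continuity, Lemma~\ref{norm4}, and Young's inequality, absorbing them into $\frac{\mathcal M_0\varepsilon^2}{24}\|\nabla_h\Delta_h\tilde e\|^2$.
\item The mobility-variation term is bounded by $CM\|\tilde e^n,\tilde e^{n-1}\|_{H^1_h}$ times the dissipation.
\item The gradient of the mobility is bounded using $\|\nabla_h\phi^n\|_\infty\le C$, which follows from the $\ell^\infty$ bound and the refined bound at the previous step.
\end{itemize}
The $\hat\phi$ combination $\frac34,\frac14$ produces telescoping $\frac18$ terms, as in the proof of Theorem~\ref{dis ener theo}. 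Discrete Gronwall then gives $\|\nabla_h\tilde e^{n+1}\|_2^2+\frac{\mathcal M_0\varepsilon^2}{24}\tau\sum\|\nabla_h\Delta_h\tilde e^m\|_2^2\le C(\tau^4+h^4)^2$. This recovers the a priori assumption and closes the induction.

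\textbf{Main obstacle.} I expect the hardest step to be controlling the coupling term $\nabla_h\cdot(\breve{\mathcal M}\nabla_h\tilde\mu)$ when it is tested with a Laplacian-type function. This produces $\nabla_h\breve{\mathcal M}\cdot\nabla_h\tilde\mu$ terms that require a uniform $W^{1,\infty}_h$ bound on the numerical mobility, and that bound is exactly what the extra consistency order and the two-stage estimate are designed to provide.
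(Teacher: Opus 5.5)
Your plan is essentially the paper's proof. Both use a corrected profile with higher-order truncation error and an a priori $H^1_h$ bound at the two previous levels. Both then run a rough estimate, testing with $\tilde\mu^{n+1/2}$, which loses $\tau^{-1/2}$ but gives $\ell^\infty$ separation of $\phi^{n+1}$ through the inverse inequality. Both finish with a refined estimate tested against $-\Delta_h(\frac34\tilde e^{n+1}+\frac14\tilde e^{n-1})$ and Gronwall. The paper gets by with $O(\tau^3+h^4)$ consistency and the assumption $\|\nabla_h\tilde e^k\|_2\le\tau^{11/4}+h^{15/4}$.

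The one thing you misplace is the main obstacle. After summation by parts the mobility appears only as the weight in $[\breve{\mathcal M}^{n+1/2}\nabla_h\tilde\mu^{n+1/2},\nabla_h\Delta_h(\cdot)]$, so no $\nabla_h\breve{\mathcal M}$ term ever arises. The real work is the bound $\|\nabla_h\mathcal N_k^n\|_2^2\le C(\|\nabla_h\tilde e^{n+1}\|_2^2+\|\nabla_h\tilde e^n\|_2^2)$ of Lemma~\ref{nonlinear es in refine 2nd}. That bound needs control of the discrete gradients of the mean-value coefficients, not just the Lipschitz continuity you invoke. The paper gets this from $\|\phi^{n+1}-\phi^n\|_\infty\le(C^\star+1)\tau$ combined with $\tau\le C_2h$. $W^{1,\infty}$ bounds on $\phi^n$ and $\phi^{n+1}$ from the inverse inequality would also do.
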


\subsection{Higher-order consistency analysis}
By standard consistency, the projected solution $\Phi_{N}$ satisfies the discrete scheme \eqref{2order} with second order accuracy in both time and space. However, this classical leading local truncation error is far from sufficient to recover an $\ell^{\infty}$ bound for the numerical solution under a variable mobility. To remedy this, we carry out a higher-order consistency analysis. The core motivation is to retain a sufficiently order of accuracy so that inverse inequalities can be applied, thereby yielding the desired $\ell^{\infty}$ separation property. This property is crucial for controlling the nonlinear error terms and establishing a uniform positive lower bound for the variable mobility.

The statement of higher-order consistency result is given below, with a detailed proof provided in Appendix A.

\begin{prop}\label{prop higher order}
	Let $\Phi$ be the exact solution of the NCH system \eqref{pdeeq}, and let $\Phi_{N}$ denote its Fourier projection. Then there exist supplementary fields $\Phi_{\tau}$ and $\Phi_{h}$, which are continuous functions depending only on $\Phi$, such that the corrected profile
	\begin{equation*}
		\hat{\Phi}:=\Phi_{N}+\tau^2\mathcal{P}_{N}\Phi_{\tau}+h^2\mathcal{P}_{N}\Phi_{h},
	\end{equation*}
	satisfies the numerical scheme with a higher-order consistency:
	\begin{align}
		\frac{\hat{\Phi}^{n+1}-\hat{\Phi}^{n}}{\tau}
		&=\nabla_{h}\cdot\left[\left(\frac{3}{2}\breve{\mathcal{M}}(\hat{\Phi}^{n})-\frac{1}{2}\breve{\mathcal{M}}(\hat{\Phi}^{n-1})\right)\nabla_{h}\hat{\mu}^{n+\frac{1}{2}}\right]+\omega^{n+\frac{1}{2}},
		\label{hat Phi 1}\\
		\hat{\mu}^{n+\frac{1}{2}}
		&=\frac{G(1+\hat{\Phi}^{n+1})-G(1+\hat{\Phi}^{n})}{\hat{\Phi}^{n+1}-\hat{\Phi}^{n}}
		+\frac{G(1-\hat{\Phi}^{n+1})-G(1-\hat{\Phi}^{n})}{\hat{\Phi}^{n+1}-\hat{\Phi}^{n}}
		\label{hat Phi 2}\\
		&\quad-\theta_{0}\left(\frac{3}{2}\hat{\Phi}^{n}-\frac{1}{2}\hat{\Phi}^{n-1}\right)
		-\varepsilon^2\Delta_{h}\left(\frac{3}{4}\hat{\Phi}^{n+1}+\frac{1}{4}\hat{\Phi}^{n-1}\right)
		+\sigma(-\Delta_{h})^{-1}\left(\hat{\Phi}^{n+\frac{1}{2}}
		-\overline{\hat{\Phi}^{n+\frac{1}{2}}}\right)
		\nonumber\\
		&\quad+\tau \,
		\Big(\ln(1+\hat{\Phi}^{n+1})-\ln(1+\hat{\Phi}^{n})
		-\ln(1-\hat{\Phi}^{n+1})+\ln(1-\hat{\Phi}^{n})\Big),
		\nonumber
	\end{align}
	where $\breve{\mathcal{M}}(x)=A_h \mathcal{M}(x)$ and the local truncation error satisfies
	\begin{equation}
		\|\omega^{n+\frac{1}{2}}\|_{-1,h}\le C(\tau^3+h^4), 
	\end{equation}
	 with zero mean, $\overline{\omega^{n+\frac{1}{2}}}=0$, at the discrete level for any $n\in\mathbb{N}$.
\end{prop}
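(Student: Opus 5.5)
The plan is the standard two-stage asymptotic-expansion argument: build the temporal correction $\Phi_\tau$ first, then the spatial correction $\Phi_h$, each as the solution of a linearized NCH equation whose source is the leading truncation error of the previous stage.

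\textbf{Stage 1 (temporal correction).} Insert $\Phi_N$ (or $\Phi$ itself, with the projection error of order $h^{l}$ absorbed by \eqref{standard estimate}) into a semi-discrete-in-time version of \eqref{2order}. Then Taylor-expand around $t_{n+1/2}$. Using the regularity class $\mathcal{R}$, each temporal approximation has an expansion of the form exact value $+\tau^2 g^{(2)}(t_{n+1/2}) + O(\tau^3)$. The terms to expand are:
\begin{itemize}
\item the difference quotient $(\Phi^{n+1}-\Phi^n)/\tau$;
\item the modified CN quotients $\frac{G(1\pm\Phi^{n+1})-G(1\pm\Phi^{n})}{\Phi^{n+1}-\Phi^n}$, whose expansion is symmetric about the midpoint, so the $O(\tau)$ term cancels;
\item the extrapolations $\frac32\Phi^n-\frac12\Phi^{n-1}$ and $\frac32\mathcal{M}^n-\frac12\mathcal{M}^{n-1}$;
\item the combination $\frac34\Phi^{n+1}+\frac14\Phi^{n-1}$;
\item the average $\Phi^{n+1/2}$.
\end{itemize}
The artificial regularization term needs separate care. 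It equals $\tau\cdot(\tau\,\partial_t(\ln(1+\Phi)-\ln(1-\Phi))+O(\tau^3))$, so it is $O(\tau^2)$ with an explicit leading coefficient, and its next term is $O(\tau^4)$. The separation property \eqref{sep for exact} keeps all logarithmic derivatives bounded. The result is a residual $\tau^2 g^{(0)}+O(\tau^3)$, with $g^{(0)}$ a smooth function of $\Phi$ and its derivatives.

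Define $\Phi_\tau$ as the solution of the linearized equation
\[
\partial_t\Phi_\tau=\nabla\cdot\big(\mathcal{M}(\Phi)\nabla\mathcal{N}'(\Phi)\Phi_\tau\big)+\nabla\cdot\big(\mathcal{M}'(\Phi)\Phi_\tau\nabla\mu\big)-g^{(0)},\qquad \Phi_\tau(\cdot,0)=0,
\]
where $\mathcal{N}'(\Phi)\Phi_\tau=\big(\frac1{1+\Phi}+\frac1{1-\Phi}-\theta_0\big)\Phi_\tau-\varepsilon^2\Delta\Phi_\tau+\sigma(-\Delta)^{-1}(\Phi_\tau-\overline{\Phi_\tau})$. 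This linear parabolic problem has smooth coefficients (by separation and the regularity of $\Phi$), so $\Phi_\tau$ is smooth. Its mean is zero because the right-hand side is in divergence form; $g^{(0)}$ is also of divergence/mean-zero type, since it comes from the conservative scheme. Substituting $\Phi_N+\tau^2\mathcal{P}_N\Phi_\tau$ into the time-discrete scheme and expanding the nonlinear terms around $\Phi$ shows the $\tau^2$ residual is cancelled. The leftover is $O(\tau^3)$: it consists of $\tau^3$ Taylor remainders of $\Phi$, $\tau^2\cdot\tau^2$ discretization errors of $\Phi_\tau$, and $\tau^4$ nonlinear cross terms.

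\textbf{Stage 2 (spatial correction).} Apply the centered difference operators to $\Phi_1:=\Phi_N+\tau^2\mathcal{P}_N\Phi_\tau$. Because the stencils are symmetric and the grid is periodic, $\nabla_h\cdot(A_h\mathcal{M}\,\nabla_h\cdot)$, $\Delta_h$ and $(-\Delta_h)^{-1}$ have expansions containing only even powers of $h$. The spatial residual is therefore $h^2 f^{(0)}+O(h^4)$. This is where the $C^8$ spatial regularity is needed, to reach the $O(h^4)$ remainder after applying fourth-order operators. Define $\Phi_h$ as the solution of the same linearized equation with source $-f^{(0)}$. The substituted profile $\hat\Phi$ then has residual $\omega^{n+1/2}=O(\tau^3+h^4)$, with cross terms $\tau^2h^2=O(\tau^3+h^4)$ by Young's inequality. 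The supplementary fields are $O(\tau^2+h^2)$ small, so $\hat\Phi$ inherits separation (bounded away from $\pm1$) for small $\tau,h$. This keeps the logarithms and the mobility $\breve{\mathcal{M}}(\hat\Phi)\ge\mathcal{M}_0$ well defined, and the $\tau^6$ regularization in \eqref{mob regu} contributes only an $O(\tau^6)$ perturbation.

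\textbf{Zero mean and the $\|\cdot\|_{-1,h}$ bound.} Every term in \eqref{hat Phi 1} other than $\omega$ has zero discrete mean:
\begin{itemize}
\item the divergence term, by Lemma \ref{sum by part};
\item the left side, because $\hat\Phi$ lies in $\mathfrak{B}^K$ and $\Phi_N,\Phi_\tau,\Phi_h$ have conserved (respectively zero) mean, so the grid sums equal the integrals.
\end{itemize}
Hence $\overline{\omega^{n+1/2}}=0$. The $\|\cdot\|_{-1,h}$ bound then follows from $\|\omega\|_{-1,h}\le C\|\omega\|_2$ (discrete Poincaré) together with pointwise $O(\tau^3+h^4)$ bounds on the remainders.

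\textbf{Main obstacle.} The hardest step is the careful expansion of the modified CN logarithmic quotient and the extrapolated mobility. I must show that the mobility error enters only as $\tau^2(\ldots)\nabla\mu$, i.e. linearly in $\Phi_\tau$ through $\mathcal{M}'(\Phi)\Phi_\tau$, and that the remaining products are genuinely $O(\tau^3)$ rather than $O(\tau^2 h^{-1})$. I would handle this by expanding everything around $t_{n+1/2}$ and keeping track only of the coefficient of $\tau^2$, relying on the regularity class $\mathcal{R}$ for $\Phi$ and the linear parabolic regularity for $\Phi_\tau,\Phi_h$.
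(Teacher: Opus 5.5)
Your proposal is correct and follows essentially the same route as the paper's Appendix~A. You construct a temporal correction $\Phi_\tau$ solving the linearized NCH equation with source $-g^{(0)}$ (the paper's $-K^{(0)}$) and zero initial data, then a spatial correction $\Phi_h$ with source $-f^{(0)}$ (the paper's $-Q^{(0)}$) based on the even-in-$h$ expansion of the centered stencils, and you obtain $\overline{\omega^{n+\frac12}}=0$ from $\hat\Phi\in\mathfrak{B}^K$, mass conservation of the corrections, and summation by parts, exactly as the paper does. The remaining differences are cosmetic and do not affect the argument: you linearize about $\Phi$ rather than about $\Phi_N$ and $\Phi_N+\tau^2\mathcal{P}_N\Phi_\tau$, and you derive the $\|\cdot\|_{-1,h}$ bound from an $\ell^2$ bound.
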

\begin{remark}\label{mobility separation}
	The construction process reveals that the supplementary function $\Phi_{\tau}$ and $\Phi_{h}$ depend solely on the exact solution $\Phi$, and are consequently bounded. For sufficiently small temporal step size $\tau$ and spatial step size $h$, the modified function $\hat{\Phi}$ satisfies the separation property:
	\begin{equation}\label{Phi sep}
		\epsilon_{0}^{\star}-1\le\hat{\Phi}\le 1-\epsilon_{0}^{\star},\qquad
		\epsilon_{0}^{\star}>0,
	\end{equation}
	along with boundedness in the discrete $W_{h}^{1,\infty}$ norm:
	\begin{equation}\label{the regularity assumption}
		\|\hat{\Phi}^{k}\|_{\infty}\le C^{\star},\quad
		\|\nabla_{h}\hat{\Phi}^{k}\|_{\infty}\le C^{\star},\quad
		\|\hat{\Phi}^{k+1}-\hat{\Phi}^{k}\|_{\infty}\le C^{\star}\tau,\quad\forall\, k\ge 0.
	\end{equation} 
	Given that the mobility $\mathcal{M}(x)$ satisfies $0<\mathcal{M}_{0}\le\mathcal{M}(x)\le\mathcal{M}_{1}<\infty$, $\abs{\mathcal{M}'(x)}\le M$ for $x\in[-1,1]$, we have $\breve{\mathcal{M}}(\hat{\Phi}^{n+\frac{1}{2}})\ge\mathcal{M}_{0}>0$. When approximating the mobility at temporal nodes $t^{n}$ and $t^{n-1}$ using the explicit extrapolation with coefficients $\frac{3}{2}$ and $-\frac{1}{2}$ respectively, we obtain:
	\begin{equation}\label{extra mob Phi positive}
		\frac{3}{2}\breve{\mathcal{M}}(\hat{\Phi}^{n})
		-\frac{1}{2}\breve{\mathcal{M}}(\hat{\Phi}^{n-1})
		=\breve{\mathcal{M}}(\hat{\Phi}^{n+\frac{1}{2}})+O(\tau^2)
		\ge \frac{\mathcal{M}_{0}}{2}>0.
	\end{equation}
	This approximation remains strictly positive and serves as a numerical approximation at $t^{n+\frac{1}{2}}$. Consequently, in the consistency analysis, no regularization of the form \eqref{mob regu} is required.
\end{remark}

\subsection{Rough error estimate}
 To achieve a higher-order convergence rate, instead of evaluating the standard error function $e^{n}$ directly, we estimate the error between the numerical solution and the constructed profile in Proposition \ref{prop higher order}, and define the perturbed error function as
 \begin{equation*}
 	\tilde{e}^{n} := \mathcal{P}_{h}\hat{\Phi}^{n}-\phi^{n},\quad\forall n\in\mathbb{N}.
 \end{equation*}
 Noting that $\hat{\Phi}^{n}\in\mathfrak{B}^{K}$ and utilizing the mass conservation property \eqref{mass conser hat Phi 2nd}, one readily observes that the error function preserves the discrete zero-mean property, i.e., $\overline{\tilde{e}^{n}}=0$. 
 
Subtracting the fully discrete scheme \eqref{2order} from \eqref{hat Phi 1} - \eqref{hat Phi 2} yields the following error evolutionary equation
 \begin{equation}\label{new error fun 2nd}
 	\frac{\tilde{e}^{n+1}-\tilde{e}^{n}}{\tau}
 	=\nabla_{h}\cdot\left(\check{\tilde{\mathcal{M}}}^{n+\frac{1}{2}}\nabla_{h}\hat{\mu}^{n+\frac{1}{2}}
 	+\breve{\mathcal{M}}^{n+\frac{1}{2}}\nabla_{h}\tilde{\mu}^{n+\frac{1}{2}}\right)
 	+\omega^{n+\frac{1}{2}},
 \end{equation}
 with the following expansions
\begin{align}
	\check{\tilde{\mathcal{M}}}^{n+\frac{1}{2}}
	&=\left(\frac{3}{2}\breve{\mathcal{M}}(\hat{\Phi}^{n})
	-\frac{1}{2}\breve{\mathcal{M}}(\hat{\Phi}^{n-1})\right)
	-\breve{\mathcal{M}}^{n+\frac{1}{2}},
	\label{check tilde mob}\\
	\breve{\mathcal{M}}^{n+\frac{1}{2}}
	&=\frac{3}{2}\breve{\mathcal{M}}(\phi^{n})-\frac{1}{2}\breve{\mathcal{M}}(\phi^{n-1}),
	\label{check mob}\\
		\hat{\mu}^{n+\frac{1}{2}}
	&=\frac{G(1+\hat{\Phi}^{n+1})-G(1+\hat{\Phi}^{n})}{\hat{\Phi}^{n+1}-\hat{\Phi}^{n}}
	+\frac{G(1-\hat{\Phi}^{n+1})-G(1-\hat{\Phi}^{n})}{\hat{\Phi}^{n+1}-\hat{\Phi}^{n}}
	\nonumber\\
	&\quad-\theta_{0}\left(\frac{3}{2}\hat{\Phi}^{n}-\frac{1}{2}\hat{\Phi}^{n-1}\right)
	-\varepsilon^2\Delta_{h}\left(\frac{3}{4}\hat{\Phi}^{n+1}+\frac{1}{4}\hat{\Phi}^{n-1}\right)
	+\sigma(-\Delta_{h})^{-1}\left(\hat{\Phi}^{n+\frac{1}{2}}
	-\overline{\hat{\Phi}^{n+\frac{1}{2}}}\right)
	\nonumber\\
	&\quad+\tau \,
	\Big(\ln(1+\hat{\Phi}^{n+1})-\ln(1+\hat{\Phi}^{n})
	-\ln(1-\hat{\Phi}^{n+1})+\ln(1-\hat{\Phi}^{n})\Big),
	\nonumber\\
	\tilde{\mu}^{n+\frac{1}{2}}
	&=\hat{\mu}^{n+\frac{1}{2}}-\mu^{n+\frac{1}{2}} \nonumber\\
	&=H_{1+\hat{\Phi}^n}^1(1+\hat{\Phi}^{n+1})-H_{1+\phi^n}^1(1+\phi^{n+1})
	-H_{1-\hat{\Phi}^n}^1(1-\hat{\Phi}^{n+1})+H_{1-\phi^n}^1(1-\phi^{n+1})
	\nonumber\\
	&\quad -\theta_{0}\left(\frac{3}{2}\tilde{e}^{n}-\frac{1}{2}\tilde{e}^{n-1}\right)
	-\varepsilon^2\Delta_{h}\left(\frac{3}{4}\tilde{e}^{n+1}+\frac{1}{4}\tilde{e}^{n-1}\right)
	+\sigma(-\Delta_{h})^{-1}\tilde{e}^{n+\frac{1}{2}}
	\label{tilde mu}\\
	&\quad+\tau\left(\ln(1+\hat{\Phi}^{n+1})-\ln(1+\hat{\Phi}^{n})
	             -\ln(1-\hat{\Phi}^{n+1})+\ln(1-\hat{\Phi}^{n})
	             \right.\nonumber\\
	&\hspace{2cm}\left.
	-\ln(1+\phi^{n+1})+\ln(1+\phi^{n})+\ln(1-\phi^{n+1})-\ln(1-\phi^{n})\right),\nonumber
\end{align}
where
\begin{equation*}
	\|\omega^{n+\frac{1}{2}}\|_2\le C(\tau^3+h^4).
\end{equation*}

 Since $\hat{\mu}^{n+\frac{1}{2}}$ only depends on the constructed profile $\hat{\Phi}$, it is reasonable to assume that
 \begin{equation}\label{mu hat bound 2nd}
 	\|\hat{\mu}^{n+\frac{1}{2}}\|_{W_h^{1,\infty}}\le C^{\star}.
 \end{equation}
 Furthermore, we impose the following assumption at the previous two time steps:
 \begin{equation}\label{priori assum 2nd}
 	\|\nabla_{h}\tilde{e}^{k}\|_{2}\le\tau^{\frac{11}{4}}+h^{\frac{15}{4}},\quad k=n,\, n-1.
 \end{equation}
 Applying the Poincar{\'e} inequality ($ \|\tilde{e}^{k}\|_{2}\le C_{p}\|\nabla_{h}\tilde{e}^{k}\|_{2}$), we immediately obtain
 \begin{equation}
 	\|\tilde{e}^{k}\|_{H_{h}^1}\le C(\tau^{\frac{11}{4}}+h^{\frac{15}{4}}),\quad k=n,\, n-1.
 \end{equation}
 This a priori assumption will be recovered by the convergence analysis at the next time step. As a direct consequence, the inverse inequality yields an $\ell^{\infty}$ bound for the numerical error function
 \begin{equation}\label{tilde phik infty es}
 	\|\tilde{e}^{k}\|_{\infty}\le \frac{C\|\tilde{e}^{k}\|_{H_h^1}}{h^{\frac{1}{2}}}
 	\le C(\tau^{\frac{9}{4}}+h^{\frac{13}{4}})
 	\le \frac{\epsilon_{0}^{\star}}{2}\le 1,
 \end{equation}
 with $k=n,\, n-1$, and the linear refinement constraint $C_{1}h\le\tau\le C_{2}h$ has been used. In combination with the regularity assumption~\eqref{the regularity assumption}, this immediately implies
  \begin{equation}\label{phi infty es 2nd}
 	\|\phi^{k}\|_{\infty}
 	\le \|\hat{\Phi}^{k}\|_{\infty}+\|\tilde{e}^{k}\|_{\infty}
 	\le C^{\star}+1=:\tilde{C_{3}},\quad k=n,\, n-1.
 \end{equation}
 Moreover, incorporating the separation property of $\hat{\Phi}$ in~\eqref{Phi sep} leads to the analogous separation bound for the numerical solution at $k=n,\, n-1$:
 \begin{equation}\label{sep for phi n}
 	\frac{\epsilon_{0}^{\star}}{2}\le 1+\phi^{k}=1+\hat{\Phi}^{k}-\tilde{e}^k\le 2,\quad
 	\frac{\epsilon_{0}^{\star}}{2}\le 1-\phi^{k}=1-\hat{\Phi}^{k}+\tilde{e}^k\le 2.
 \end{equation}

 \begin{remark}\label{check mob positive}
 	Based on the separation property of $\hat{\Phi}$ in~\eqref{Phi sep} and the boundedness of the first derivative of mobility over $[-1,1]$, together with the estimate~\eqref{tilde phik infty es}, we have for $k = n,\, n-1$
 	\begin{equation*}
 		\breve{\mathcal{M}}(\phi^{k})
 		=\breve{\mathcal{M}}(\hat{\Phi}^{k})+\breve{\mathcal{M}}'(\hat{\Phi}^{k})(-\tilde{e}^{k})+O((-\tilde{e}^{k})^2)
 		=\breve{\mathcal{M}}(\hat{\Phi}^{k})
 		+O(\tau^{\frac{9}{4}}+h^{\frac{13}{4}}).
 	\end{equation*}
 	Therefore, when $\tau$ and $h$ are sufficiently small, estimate~\eqref{extra mob Phi positive} implies
 	\begin{equation*}
 		\begin{split}
 			\frac{3}{2}\breve{\mathcal{M}}(\phi^{n})-\frac{1}{2}\breve{\mathcal{M}}(\phi^{n-1})
 			&=\frac{3}{2}\breve{\mathcal{M}}(\hat{\Phi}^{n})-\frac{1}{2}\breve{\mathcal{M}}(\hat{\Phi}^{n-1})+O(\tau^{\frac{9}{4}}+h^{\frac{13}{4}})\\
 			&\ge \breve{\mathcal{M}}(\hat{\Phi}^{n+\frac{1}{2}})+O(\tau^2)+O(\tau^{\frac{9}{4}}+h^{\frac{13}{4}})
 			\ge \frac{\mathcal{M}_{0}}{2}>0.
 		\end{split}
 	\end{equation*}
 	This analysis shows that, if the a priori assumption~\eqref{priori assum 2nd} holds, the regularization of the mobility in the scheme~\eqref{mob regu} can be omitted as well. In this case, $\breve{\mathcal{M}}^{n+\frac{1}{2}}$ may be reformulated as in~\eqref{check mob}, with the corresponding modified form given in~\eqref{check tilde mob}.
 \end{remark}

Before proceeding into the rough error estimate, a preliminary bound for the nonlinear term is necessary. For the sake of brevity, we adopt the following notation:
\begin{align*}
	&\mathcal{N}_1^{n}:=H_{1+\hat{\Phi}^n}^1(1+\hat{\Phi}^{n+1})-H_{1+\phi^n}^1(1+\phi^{n+1}),\,
	&&\mathcal{N}_2^{n}:=-H_{1-\hat{\Phi}^n}^1(1-\hat{\Phi}^{n+1})+H_{1-\phi^n}^1(1-\phi^{n+1}),\\
	&\mathcal{N}_3^{n}:=\ln(1+\hat{\Phi}^{n+1})-\ln(1+\phi^{n+1}),\,
	&&\mathcal{N}_4^{n}:=-\ln(1+\hat{\Phi}^{n})+\ln(1+\phi^{n}),\\
	&\mathcal{N}_5^{n}:=-\ln(1-\hat{\Phi}^{n+1})+\ln(1-\phi^{n+1}),\,
	&&\mathcal{N}_6^{n}:=\ln(1-\hat{\Phi}^{n})-\ln(1-\phi^{n}).
\end{align*}
For $\mathcal{N}_1^{n}$, the decomposition is valid
\begin{equation}\label{N1n split}
	\begin{aligned}
		\mathcal{N}_1^{n}&= \mathcal{N}_{11}^{n}+\mathcal{N}_{12}^{n},\\
		\mathcal{N}_{11}^{n}&:=
		H_{1+\phi^n}^1(1+\hat{\Phi}^{n+1})-H_{1+\phi^n}^1(1+\phi^{n+1}),\\
		\mathcal{N}_{12}^{n}&:=
		H_{1+\hat{\Phi}^{n+1}}^1(1+\hat{\Phi}^{n})-H_{1+\hat{\Phi}^{n+1}}^1(1+\phi^{n}).
	\end{aligned}
\end{equation}
And similar decomposition can be performed to $\mathcal{N}_2^{n}$. The preliminary estimate is stated in Lemma~\ref{rough bound for nolinear}; detailed proof will be provided in Appendix B.
\begin{lemma}\label{rough bound for nolinear}
	Suppose that the modified function $\hat{\Phi}$ satisfies the separation property~\eqref{Phi sep} and the regularity assumption~\eqref{the regularity assumption}, and that the numerical solutions at the two previous time levels, $\phi^{n}$ and $\phi^{n-1}$, also possess the separation property~\eqref{sep for phi n}. Then the nonlinear inner-product terms admit estimates
		\begin{align}
		&\langle\tilde{e}^{n+1},\mathcal{N}_1^{n}\rangle
		+\tau\,
		\langle\tilde{e}^{n+1},\mathcal{N}_3^{n}+\mathcal{N}_4^{n}\rangle
		\ge \tilde{C_{5}}\|\tilde{e}^{n+1}\|_{2}^{2}
		-\tilde{C_{4}}\|\tilde{e}^{n}\|_{2}^{2},\label{5.3.1}\\
		&\langle\tilde{e}^{n+1},\mathcal{N}_2^{n}\rangle
		+\tau\,
		\langle\tilde{e}^{n+1},	\mathcal{N}_5^{n}+\mathcal{N}_6^{n}\rangle
		\ge \tilde{C_{5}}\|\tilde{e}^{n+1}\|_{2}^{2}
		-\tilde{C_{4}}\|\tilde{e}^{n}\|_{2}^{2},   \label{4.1.2}  
	\end{align}
	where $\tilde{C}_{4}$ and $\tilde{C}_{5}$ are positive constants depending only on $\epsilon_{0}^{\star}$ and $C^{\star}$, but independent of the time step $\tau$ and the spatial mesh size~$h$.
\end{lemma}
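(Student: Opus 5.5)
The plan is to prove \eqref{5.3.1} term by term, using the splitting \eqref{N1n split}. Estimate \eqref{4.1.2} then follows by the same argument with $1+\cdot$ replaced by $1-\cdot$. The main difficulty is that, at this stage, $\phi^{n+1}$ is only known to satisfy $\|\phi^{n+1}\|_\infty<1$ (Theorem \ref{positive theo}); no separation bound is available for it. Every term involving $\phi^{n+1}$ must therefore be handled using only monotonicity, with no upper bound on a derivative. Terms at level $n$ may use the separation \eqref{sep for phi n}.

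\emph{Step 1: the splitting is exact.} From the definition, $H_a^1(x)=\frac{G(x)-G(a)}{x-a}$ is symmetric in $(a,x)$. Hence $H_{1+\phi^n}^1(1+\hat\Phi^{n+1})=H_{1+\hat\Phi^{n+1}}^1(1+\phi^n)$. Adding and subtracting this quantity in $\mathcal{N}_1^n$ gives exactly $\mathcal{N}_1^n=\mathcal{N}_{11}^n+\mathcal{N}_{12}^n$.

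\emph{Step 2: $\mathcal{N}_{11}^n$ is the coercive term.} Apply the mean value theorem in $x$, with $a=1+\phi^n$ fixed. This gives $\mathcal{N}_{11}^n=H_{1+\phi^n}^2(\xi)\,\tilde e^{n+1}$ pointwise, where $\xi$ lies between $1+\hat\Phi^{n+1}$ and $1+\phi^{n+1}$. By Lemma \ref{G prop}(3), $H_{a}^2(\xi)=\frac{1}{2\tilde\zeta}$ with $\tilde\zeta$ between $a$ and $\xi$. All of $1+\phi^n$, $1+\phi^{n+1}$ and $1+\hat\Phi^{n+1}$ lie in $(0,2]$, using Theorem \ref{positive theo} and \eqref{Phi sep}. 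Hence $\tilde\zeta\le 2$, and
\[
\langle \tilde e^{n+1},\mathcal{N}_{11}^n\rangle\ge \tfrac14\|\tilde e^{n+1}\|_2^2 .
\]
This bound needs only an upper bound on $\tilde\zeta$, so no separation of $\phi^{n+1}$ is required. This step is the crux of the lemma.

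\emph{Step 3: $\mathcal{N}_{12}^n$ is a perturbation.} Again by the mean value theorem, $\mathcal{N}_{12}^n=H_{1+\hat\Phi^{n+1}}^2(\eta)\,\tilde e^n$, where $\eta$ lies between $1+\hat\Phi^n$ and $1+\phi^n$. Both of these are at least $\epsilon_0^\star/2$ by \eqref{Phi sep} and \eqref{sep for phi n}, and $1+\hat\Phi^{n+1}\ge\epsilon_0^\star$. Therefore $0\le H^2\le 1/\epsilon_0^\star$, so $|\mathcal{N}_{12}^n|\le \frac{1}{\epsilon_0^\star}|\tilde e^n|$. Cauchy--Schwarz and Young's inequality then give
\[
\langle \tilde e^{n+1},\mathcal{N}_{12}^n\rangle\ge -\tfrac1{16}\|\tilde e^{n+1}\|_2^2-\tfrac{4}{(\epsilon_0^\star)^2}\|\tilde e^n\|_2^2 .
\]

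\emph{Step 4: the regularization terms.} Since $\ln$ is increasing, $\mathcal{N}_3^n$ has the same sign as $\tilde e^{n+1}$ pointwise, so $\tau\langle\tilde e^{n+1},\mathcal{N}_3^n\rangle\ge0$. Again no separation of $\phi^{n+1}$ is needed. For $\mathcal{N}_4^n$, both arguments of the logarithm are at least $\epsilon_0^\star/2$, so $|\mathcal{N}_4^n|\le \frac{2}{\epsilon_0^\star}|\tilde e^n|$. With $\tau\le1$ this gives $\tau\langle\tilde e^{n+1},\mathcal{N}_4^n\rangle\ge -\frac1{16}\|\tilde e^{n+1}\|_2^2-\frac{16}{(\epsilon_0^\star)^2}\|\tilde e^n\|_2^2$.

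Summing Steps 2--4 yields \eqref{5.3.1} with $\tilde C_5=\frac18$ and $\tilde C_4=20/(\epsilon_0^\star)^2$. Both constants are independent of $\tau$ and $h$.
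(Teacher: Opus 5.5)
Your proposal is correct and follows essentially the same route as the paper's proof in Appendix B. Both use the splitting $\mathcal{N}_1^n=\mathcal{N}_{11}^n+\mathcal{N}_{12}^n$ and the mean value representation $H_a^2=\frac{1}{2\tilde\zeta}$. Both get coercivity of $\mathcal{N}_{11}^n$ from an upper bound on $\tilde\zeta$ that needs only $\|\phi^{n+1}\|_\infty<1$. Both then use separation at level $n$ for $\mathcal{N}_{12}^n$ and $\mathcal{N}_4^n$, monotonicity of $\ln$ for $\mathcal{N}_3^n$, and Young's inequality with $\tau\le 1$.

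The differences are cosmetic. You justify that the splitting is exact via the symmetry $H_a^1(x)=H_x^1(a)$, which the paper leaves implicit. You also bound $\tilde\zeta\le 2$ directly rather than by $2C^\star+3$, which gives the sharper $\tilde C_5=\frac18$ in place of the paper's $\frac{1}{8C^\star+12}$.
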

 
Lemma \ref{rough error lemma} states the rough error estimate result.
 \begin{lemma}\label{rough error lemma}
 	Suppose that the regularity requirement assumption \eqref{mu hat bound 2nd} and the a priori bound \eqref{priori assum 2nd} hold. If the time step $\tau$ and the spatial mesh size $h$ are sufficiently small and satisfy the linear refinement constraint $C_{1}h\le\tau\le C_{2}h$, then the following rough error estimate is valid for the numerical error evolution system \eqref{new error fun 2nd}:
 	\begin{equation}\label{rough error es 2nd}	\|\tilde{e}^{n+1}\|_{2}+\|\nabla_{h}\tilde{e}^{n+1}\|_{2}\le C(\tau^{\frac{9}{4}}+h^{\frac{13}{4}}),
 	\end{equation}
 	where $C>0$ is a constant independent of $\tau$ and $h$.
 \end{lemma}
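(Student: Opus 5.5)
We take the discrete inner product of the error equation \eqref{new error fun 2nd} with $2\tau\tilde{e}^{n+1}$ and apply the summation-by-parts formulas of Lemma~\ref{sum by part}. The time-difference term produces
\[
\|\tilde{e}^{n+1}\|_2^2-\|\tilde{e}^n\|_2^2+\|\tilde{e}^{n+1}-\tilde{e}^n\|_2^2 .
\]
The diffusive part splits into two pieces:
\[
-2\tau\big[\check{\tilde{\mathcal M}}^{n+\frac12}\nabla_h\hat\mu^{n+\frac12},\nabla_h\tilde{e}^{n+1}\big]_\Omega ,
\qquad
-2\tau\big[\breve{\mathcal M}^{n+\frac12}\nabla_h\tilde\mu^{n+\frac12},\nabla_h\tilde{e}^{n+1}\big]_\Omega .
\]

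\emph{First piece.} Under the a priori assumption \eqref{priori assum 2nd}, Remark~\ref{check mob positive} gives $\breve{\mathcal M}^{n+\frac12}\ge \mathcal M_0/2$. Since $|\mathcal M'|\le M$, the mobility difference \eqref{check tilde mob} satisfies
\[
|\check{\tilde{\mathcal M}}^{n+\frac12}|\le CM\big(|\tilde{e}^n|+|\tilde{e}^{n-1}|\big)
\]
pointwise, after face averaging. With \eqref{mu hat bound 2nd}, this piece is then bounded by $C\tau(\|\tilde{e}^n\|_2+\|\tilde{e}^{n-1}\|_2)\|\nabla_h\tilde{e}^{n+1}\|_2$.

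\emph{Second piece.} We insert the expansion \eqref{tilde mu} of $\tilde\mu^{n+\frac12}$. The dominant term is the surface diffusion part $-\tfrac34\varepsilon^2\Delta_h\tilde{e}^{n+1}$. A further summation by parts turns it into
\[
\tfrac32\tau\varepsilon^2\big[\breve{\mathcal M}^{n+\frac12}\nabla_h\Delta_h\tilde{e}^{n+1},\nabla_h\tilde{e}^{n+1}\big]_\Omega .
\]
This sign-indefinite product is the reason a genuinely \emph{rough} estimate is needed. Testing with $\tilde{e}^{n+1}$ only yields positivity for the constant-coefficient Laplacian, not for the variable-mobility operator.

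\emph{Main obstacle: closing the estimate in $H^1$.} We see two ways to handle this term, and prefer the second.
\begin{itemize}
\item[(a)] Work crudely, using the inverse inequality $\|\nabla_h\Delta_h v\|_2\le Ch^{-2}\|\nabla_h v\|_2$ together with the smallness of $\tau$.
\item[(b)] Test instead with $-2\tau\Delta_h\tilde{e}^{n+1}$, so that the left side controls $\|\nabla_h\tilde{e}^{n+1}\|_2^2$ directly, and write $\breve{\mathcal M}^{n+\frac12}=\bar{\mathcal M}+(\breve{\mathcal M}^{n+\frac12}-\bar{\mathcal M})$ with $\bar{\mathcal M}$ a constant.
\end{itemize}
In version (b), the constant part gives the coercive term $\tfrac32\bar{\mathcal M}\tau\varepsilon^2\|\nabla_h\Delta_h\tilde{e}^{n+1}\|_2^2$. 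The variable part is controlled by $\|\nabla_h\breve{\mathcal M}^{n+\frac12}\|_\infty$ through a product rule. This quantity is not uniformly bounded, because $\nabla_h\phi^n$ is only controlled via $\|\nabla_h\tilde{e}^n\|_\infty\le Ch^{-3/2}\|\nabla_h\tilde{e}^n\|_2$. However, \eqref{priori assum 2nd} gives $h^{-3/2}(\tau^{11/4}+h^{15/4})=O(h^{5/4})$, which is small, so $\|\nabla_h\breve{\mathcal M}^{n+\frac12}\|_\infty\le C$. The variable-mobility part is then absorbed by Cauchy--Schwarz into the coercive term, with remainders of the form $C\tau\|\nabla_h\tilde{e}^{n+1}\|_2^2$.

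\emph{Remaining terms.}
\begin{itemize}
\item The $\tilde{e}^{n-1}$ part of the surface diffusion term and the concave $\theta_0$ term are handled by Cauchy--Schwarz in the same way.
\item The nonlocal term is harmless.
\item The nonlinear log terms $\mathcal N_i^n$ have bounded derivatives by the separation bounds \eqref{sep for phi n} and Lemma~\ref{rough bound for nolinear}. A separation bound for $\phi^{n+1}$ is not yet available, but monotonicity of $H^1_a$ and of $\ln$ makes the $\phi^{n+1}$-dependent parts sign-favourable; the remaining parts are Lipschitz in $\tilde{e}^n$. We therefore only need an upper bound of $\tau\|\nabla_h\mathcal N\|$, via the discrete chain rule with bounded $\|\nabla_h\hat\Phi\|_\infty$.
\item The truncation term contributes $2\tau\langle\omega^{n+\frac12},\cdot\rangle\le C\tau(\tau^3+h^4)\|\cdot\|$, using $\|\omega^{n+\frac12}\|_2\le C(\tau^3+h^4)$.
\end{itemize}

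\emph{Conclusion.} Collecting everything, we expect an inequality of the form
\[
(1-C\tau)\|\nabla_h\tilde{e}^{n+1}\|_2^2\le \|\nabla_h\tilde{e}^n\|_2^2+C\tau\big(\|\tilde{e}^n\|_{H^1_h}^2+\|\tilde{e}^{n-1}\|_{H^1_h}^2\big)+C\tau(\tau^3+h^4)^2 .
\]
With \eqref{priori assum 2nd} the right side is $O(\tau^{11/2}+h^{15/2})$, which is much better than the rough bound requires. Any loss from the crude inverse inequalities used above, costing at most a factor of $h^{-1}$ after taking square roots under $C_1h\le\tau\le C_2h$, still leaves
\[
\|\nabla_h\tilde{e}^{n+1}\|_2\le C(\tau^{9/4}+h^{13/4}).
\]
Poincaré's inequality then adds the bound on $\|\tilde{e}^{n+1}\|_2$, giving \eqref{rough error es 2nd}.
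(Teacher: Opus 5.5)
Your preferred version (b) does not close, and it fails exactly where the rough estimate is needed. With the test function $-2\tau\Delta_h\tilde e^{n+1}$, the logarithmic part of $\tilde\mu^{n+\frac12}$ enters as $2\tau\bigl[\breve{\mathcal M}^{n+\frac12}\nabla_h\bigl(\mathcal N_1^n+\mathcal N_2^n+\tau\sum_{k=3}^{6}\mathcal N_k^n\bigr),\nabla_h\Delta_h\tilde e^{n+1}\bigr]_\Omega$. Monotonicity of $H^1_a$ and of $\ln$ gives a sign for the $\ell^2$ pairing $\langle\tilde e^{n+1},\mathcal N^n\rangle$, which is what Lemma~\ref{rough bound for nolinear} exploits. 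It gives no sign for this weighted third-order pairing, so you need upper bounds on $\|\nabla_h\mathcal N_k^n\|_2$. For $\mathcal N_{11}^n$, $\mathcal N_3^n$, $\mathcal N_5^n$ these bounds require a $(\tau,h)$-uniform lower bound on $1\pm\phi^{n+1}$ and $\ell^\infty$ control of $\phi^{n+1}-\phi^n$. That is why Lemma~\ref{nonlinear es in refine 2nd} assumes \eqref{sep for phi n1} and \eqref{es for discrete temporal derivative}. Theorem~\ref{positive theo} gives only $|\phi^{n+1}|<1$ with an unquantified margin, so the prefactor $\tau$ does not rescue you. Those hypotheses are consequences of the lemma you are proving, so the argument is circular. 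Version (a) has the same defect, since the nonlinear terms then appear as $[\breve{\mathcal M}^{n+\frac12}\nabla_h\mathcal N^n,\nabla_h\tilde e^{n+1}]_\Omega$; moreover $\tau h^{-2}\sim h^{-1}$ is not small.

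There is also a second, repairable, flaw in (b). The $-\tfrac14\varepsilon^2\Delta_h\tilde e^{n-1}$ part of $\tilde\mu^{n+\frac12}$ produces $[\breve{\mathcal M}^{n+\frac12}\nabla_h\Delta_h\tilde e^{n-1},\nabla_h\Delta_h\tilde e^{n+1}]_\Omega$. Cauchy--Schwarz then leaves $C\tau\|\nabla_h\Delta_h\tilde e^{n-1}\|_2^2$, which \eqref{priori assum 2nd} does not control. The inverse inequality gives only $C\tau h^{-4}(\tau^{11/4}+h^{15/4})^2=O(h^{5/2})$, i.e.\ $\|\nabla_h\tilde e^{n+1}\|_2=O(h^{5/4})$. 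The affordable loss in the squared norms is a factor $h^{-1}$ (from $\tau^{11/2}$ to $\tau^{9/2}$), whereas this step costs $\tau h^{-4}\sim h^{-3}$. Testing with $-\Delta_h(\tfrac34\tilde e^{n+1}+\tfrac14\tilde e^{n-1})$ would make that term a perfect square, but does nothing for the nonlinear terms. Incidentally, the leading surface term in (b) is already coercive because $\breve{\mathcal M}^{n+\frac12}\ge\mathcal M_0/2$; no splitting and no bound on $\nabla_h\breve{\mathcal M}^{n+\frac12}$ is needed.

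The missing idea is to test \eqref{new error fun 2nd} with $\tilde\mu^{n+\frac12}$ itself, as the paper does. This has three effects:
\begin{itemize}
\item The variable mobility appears only in $[\breve{\mathcal M}^{n+\frac12}\nabla_h\tilde\mu^{n+\frac12},\nabla_h\tilde\mu^{n+\frac12}]_\Omega\ge\tfrac{\mathcal M_0}{2}\|\nabla_h\tilde\mu^{n+\frac12}\|_2^2$. Via Cauchy--Schwarz and $\|\cdot\|_{-1,h}$ bounds, this absorbs the mobility-error, truncation and $\tilde e^n$ contributions.
\item The logarithmic terms appear as $\langle\tilde e^{n+1},\mathcal N^n\rangle$. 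There Lemma~\ref{rough bound for nolinear} gives $\tilde C_5\|\tilde e^{n+1}\|_2^2-\tilde C_4\|\tilde e^n\|_2^2$ using only $0<1\pm\phi^{n+1}<2$.
\item The surface term gives $\tfrac58\varepsilon^2\|\nabla_h\tilde e^{n+1}\|_2^2-\tfrac18\varepsilon^2\|\nabla_h\tilde e^{n-1}\|_2^2$, so $\tilde e^{n-1}$ enters only through $\nabla_h\tilde e^{n-1}$.
\end{itemize}
The price is the non-telescoping term $\langle\tilde e^n,\tilde\mu^{n+\frac12}\rangle\le\tfrac{2}{\mathcal M_0\tau}\|\tilde e^n\|_{-1,h}^2+\tfrac{\mathcal M_0\tau}{8}\|\nabla_h\tilde\mu^{n+\frac12}\|_2^2$. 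Its factor $\tau^{-1}$ turns $\tau^{11/2}+h^{15/2}$ into $\tau^{9/2}+h^{13/2}$ under $C_1h\le\tau\le C_2h$, which is exactly where the exponents $\tfrac94$ and $\tfrac{13}{4}$ come from. Only after this step has produced \eqref{sep for phi n1} and \eqref{es for discrete temporal derivative} can an $H^1$-type test like yours be carried out; that is the paper's refined estimate.
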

 \begin{proof}
 	A discrete inner product with \eqref{new error fun 2nd} by $\tilde{\mu}^{n+\frac{1}{2}}$ results in
 	\begin{equation}\label{inner mu 2nd}
 		\begin{split}
 			&\frac{1}{\tau}\langle\tilde{e}^{n+1},\tilde{\mu}^{n+\frac{1}{2}}\rangle
 			+\left[\breve{\mathcal{M}}^{n+\frac{1}{2}}\nabla_{h}\tilde{\mu}^{n+\frac{1}{2}},\nabla_{h}\tilde{\mu}^{n+\frac{1}{2}}\right]\\
 			&\qquad
 			=\frac{1}{\tau}\langle\tilde{e}^{n},\tilde{\mu}^{n+\frac{1}{2}}\rangle
 			-\left[\check{\tilde{\mathcal{M}}}^{n+\frac{1}{2}}\nabla_{h}\hat{\mu}^{n+\frac{1}{2}},
 			\nabla_{h}\tilde{\mu}^{n+\frac{1}{2}}\right]
 			+\langle{\omega^{n+\frac{1}{2}},\tilde{\mu}^{n+\frac{1}{2}}}\rangle.
 		\end{split}
 	\end{equation}
 	Since the mobility $\breve{\mathcal{M}}^{n+\frac{1}{2}}$ is bounded below, as discussed in Remark \ref{check mob positive}, it is clear that 
 	\begin{equation*}
 		\left[\breve{\mathcal{M}}^{n+\frac{1}{2}}\nabla_{h}\tilde{\mu}^{n+\frac{1}{2}},\nabla_{h}\tilde{\mu}^{n+\frac{1}{2}}\right]
 		\ge\frac{\mathcal{M}_{0}}{2}\|\nabla_{h}\tilde{\mu}^{n+\frac{1}{2}}\|_{2}^{2}.
 	\end{equation*}
 	Leveraging the mean-zero property of the truncation error $\tau$, we obtain the following estimate
 	\begin{equation*}
 		\langle\omega^{n+\frac{1}{2}},\tilde{\mu}^{n+\frac{1}{2}}\rangle
 		\le\|\omega^{n+\frac{1}{2}}\|_{-1,h}\cdot\|\nabla_{h}\tilde{\mu}^{n+\frac{1}{2}}\|_{2}
 		\le\frac{2}{\mathcal{M}_{0}}\|\omega^{n+\frac{1}{2}}\|_{-1,h}^{2}
 		+\frac{\mathcal{M}_{0}}{8}\|\nabla_{h}\tilde{\mu}^{n+\frac{1}{2}}\|_{2}^{2}.
 	\end{equation*}
 	An analogous method provides
 		\begin{equation*}
 		\langle\tilde{e}^{n},\tilde{\mu}^{n+\frac{1}{2}}\rangle
 		\le\|\tilde{e}^{n}\|_{-1,h}\cdot\|\nabla_{h}\tilde{\mu}^{n+\frac{1}{2}}\|_{2}
 		\le\frac{2}{\mathcal{M}_{0}\tau}\|\tilde{e}^{n}\|_{-1,h}^{2}
 		+\frac{\mathcal{M}_{0}\tau}{8}\|\nabla_{h}\tilde{\mu}^{n+\frac{1}{2}}\|_{2}^{2}.
 	\end{equation*}
 	For the second term in the second line of equation~\eqref{inner mu 2nd}, the regularity assumption~\eqref{mu hat bound 2nd} gives
 	\begin{equation*}
 		\begin{split}
 			-\left[\check{\tilde{\mathcal{M}}}^{n+\frac{1}{2}}\nabla_{h}\hat{\mu}^{n+\frac{1}{2}},
 			\nabla_{h}\tilde{\mu}^{n+\frac{1}{2}}\right]
 			&\le C^{\star}\|\check{\tilde{\mathcal{M}}}^{n+\frac{1}{2}}\|_{2}
 			\cdot\|\nabla_{h}\tilde{\mu}^{n+\frac{1}{2}}\|_{2}\\
 			&\le \frac{2(C^{\star})^2}{\mathcal{M}_{0}}\|\check{\tilde{\mathcal{M}}}^{n+\frac{1}{2}}\|_{2}^2+\frac{\mathcal{M}_{0}}{8}\|\nabla_{h}\tilde{\mu}^{n+\frac{1}{2}}\|_{2}^{2}.
 		\end{split}
 	\end{equation*}
 	Substituting the above inequality into~\eqref{inner mu 2nd} yields
 	\begin{equation}\label{half es in rough}
 		\begin{split}
 			&\quad\langle\tilde{e}^{n+1},\tilde{\mu}^{n+\frac{1}{2}}\rangle
 			+\frac{\mathcal{M}_{0}\tau}{8}\|\nabla_{h}\tilde{\mu}^{n+\frac{1}{2}}\|_{2}^{2}\\
 			&\le \frac{2}{\mathcal{M}_{0}\tau}\|\tilde{e}^{n}\|_{-1,h}^2
 			+\frac{2(C^{\star})^2\tau}{\mathcal{M}_{0}}\|\check{\tilde{\mathcal{M}}}^{n+\frac{1}{2}}\|_{2}^2
 			+\frac{2\tau}{\mathcal{M}_{0}}\|\omega^{n+\frac{1}{2}}\|_{-1,h}^{2}.
 		\end{split}
 	\end{equation}

 On the other hand, for the expansive and surface diffusion terms arising in the expansion of $\langle\tilde{e}^{n+1},\tilde{\mu}^{n+\frac{1}{2}}\rangle$, the following estimates hold:
   \begin{equation}\label{2rough1}
   	\begin{split}
   		\theta_{0}\langle\tilde{e}^{n +1},\frac{3}{2}\tilde{e}^{n}-\frac{1}{2}\tilde{e}^{n-1}\rangle
   		&\le\frac{\theta_{0}^{2}\varepsilon^{-2}}{2}\|3\tilde{e}^{n}-\tilde{e}^{n -1}\|_{-1,h}^{2}+\frac{\varepsilon^2}{8}\|\nabla_{h}\tilde{e}^{n +1}\|_{2}^{2}\\
   		&\le\frac{\theta_{0}^{2}\varepsilon^{-2}}{2}
   		\left(12\|\tilde{e}^{n}\|_{-1,h}^2+4\|\tilde{e}^{n-1}\|_{-1,h}^2\right)
   		+\frac{\varepsilon^2}{8}\|\nabla_{h}\tilde{e}^{n+1}\|_{2}^{2},
   	\end{split}
   \end{equation}
   \begin{equation}
   	\begin{split}
   		-\langle\tilde{e}^{n+1},\Delta_{h}(\frac{3}{4}\tilde{e}^{n+1}+\frac{1}{4}\tilde{e}^{n-1})\rangle
   		&=\frac{3}{4}\|\nabla_{h}\tilde{e}^{n+1}\|_{2}^{2}
   		+\frac{1}{4}\langle\nabla_{h}\tilde{e}^{n+1},\nabla_{h}\tilde{e}^{n-1}\rangle\\
   		&\ge\frac{5}{8}\|\nabla_{h}\tilde{e}^{n+1}\|_{2}^{2}
   		-\frac{1}{8}\|\nabla_{h}\tilde{e}^{n-1}\|_{2}^{2}.
   	\end{split}
   \end{equation}
   A natural estimate of the nonlocal term follows directly from the fundamental inequality,
 \begin{equation}\label{2rough3}
 	\sigma\langle\tilde{e}^{n+1},(-\Delta_{h})^{-1}\tilde{e}^{n+\frac{1}{2}}\rangle
 	\ge\frac{\sigma}{4}\|\tilde{e}^{n+1}\|_{-1,h}^{2}
 	-\frac{\sigma}{4}\|\tilde{e}^{n}\|_{-1,h}^{2}.
 \end{equation}
 Substituting equations~\eqref{2rough1} - \eqref{2rough3} into~\eqref{half es in rough}, and combining them with the preliminary bound estimate for the nonlinear term given in Lemma~\ref{rough bound for nolinear}, we obtain
 \begin{equation}\label{above all 2nd}
 	\begin{split}
 		& 2\tilde{C_{5}}\|\tilde{e}^{n+1}\|_{2}^{2}
 		+\frac{\varepsilon^{2}}{2}\|\nabla_{h}\tilde{e}^{n+1}\|_{2}^{2}\\
 		\le & \left(\frac{2}{\mathcal{M}_{0}\tau}+\frac{\sigma}{4}
 		+6\theta_{0}^{2}\varepsilon^{-2}\right)\|\tilde{e}^{n}\|_{-1,h}^2
 		+2\theta_{0}^{2}\varepsilon^{-2}\|\tilde{e}^{n-1}\|_{-1,h}^2
 		+2\tilde{C_{4}}\|\tilde{e}^{n}\|_{2}^{2}\\
 		& +\frac{\varepsilon^2}{8}\|\nabla_{h}\tilde{e}^{n-1}\|_{2}^{2}
 		+\frac{2(C^{\star})^2\tau}{\mathcal{M}_{0}}\|\check{\tilde{\mathcal{M}}}^{n+\frac{1}{2}}\|_{2}^2
 		+\frac{2\tau}{\mathcal{M}_{0}}\|\omega^{n+\frac{1}{2}}\|_{-1,h}^{2}.
 	\end{split}
 \end{equation}
 
 When the temporal and spatial step sizes are sufficiently small and satisfy the linear refinement $C_{1}h\le\tau\le C_{2}h$, a combined application of the a priori assumption~\eqref{priori assum 2nd}, the discrete Poincar{\'e} inequality, and the norm relation $\|\cdot\|_{-1,h}\le C\|\cdot\|_{2}$ to the terms on the right-hand side of inequality~\eqref{above all 2nd} yields
 \begin{equation}\label{2rough es1}
 	\begin{aligned}
 		&\left(\frac{2}{\mathcal{M}_{0}\tau}+\frac{\sigma}{4}
 		+6\theta_{0}^{2}\varepsilon^{-2}\right)\|\tilde{e}^{n}\|_{-1,h}^2
 		\le \frac{CC_{p}}{\mathcal{M}_{0}\tau}\|\nabla_{h}\tilde{e}^{n}\|_{2}^{2}
 		\le C(\tau^{\frac{9}{2}}+h^{\frac{13}{2}}),\\
 		&2\theta_{0}^{2}\varepsilon^{-2}\|\tilde{e}^{n-1}\|_{-1,h}^2
 		\le 2CC_{p}\theta_{0}^{2}\varepsilon^{-2}\|\nabla_{h}\tilde{e}^{n-1}\|_{2}^2
 		\le C(\tau^{\frac{11}{2}}+h^{\frac{15}{2}}),\\
 		&2\tilde{C_{4}}\|\tilde{e}^{n}\|_{2}^{2}
 		\le 2C_{p}\tilde{C_{4}}\|\nabla_{h}\tilde{e}^{n}\|_{2}^{2}
 		\le C(\tau^{\frac{11}{2}}+h^{\frac{15}{2}}),\\
 		&\frac{2\tau}{\mathcal{M}_{0}}\|\omega^{n+\frac{1}{2}}\|_{-1,h}^{2}
 		\le C(\tau^{7}+\tau h^{8}),\qquad
 		\frac{\varepsilon^{2}}{8}\|\nabla_{h}\tilde{e}^{n-1}\|_{2}^{2}
 		\le C(\tau^{\frac{11}{2}}+h^{\frac{15}{2}}).
 	\end{aligned}
 \end{equation}
 Here, $C_{p}$ denotes the constant in the Poincar{\'e} inequality. Applying the mean value theorem to the mobility error function, we have
 \begin{equation*}
 	\begin{split}
 		\|\check{\tilde{\mathcal{M}}}^{n+\frac{1}{2}}\|_{2}^2
 		&\le 3\|\breve{\mathcal{M}}(\hat{\Phi}^{n})-\breve{\mathcal{M}}(\phi^{n})\|_{2}^{2}
 		+\|\breve{\mathcal{M}}(\hat{\Phi}^{n-1})-\breve{\mathcal{M}}(\phi^{n-1})\|_{2}^{2}	\\
 		&\le 3M^2\|\tilde{e}^{n}\|_{2}^{2}+M^2\|\tilde{e}^{n-1}\|_{2}^{2} 
 		\le C(\tau^{\frac{11}{2}}+h^{\frac{15}{2}}).
 	\end{split}
 \end{equation*}
 As a result,
 \begin{equation}\label{2rough es2}
 	\frac{2(C^{\star})^2\tau}{\mathcal{M}_{0}}\|\check{\tilde{\mathcal{M}}}^{n+\frac{1}{2}}\|_{2}^2
 	\le C(\tau^{\frac{13}{2}}+h^{\frac{17}{2}}).
 \end{equation}	
 Substituting \eqref{2rough es1} and \eqref{2rough es2} into \eqref{above all 2nd} yields
 \begin{equation*}
 	\|\tilde{e}^{n+1}\|_{2}^{2}+\|\nabla_{h}\tilde{e}^{n+1}\|_{2}^{2}
 	\le C(\tau^{\frac{9}{2}}+h^{\frac{13}{2}}),
 \end{equation*}
 which implies
 \begin{equation}\label{tilde phi es 2nd}
 	\|\tilde{e}^{n+1}\|_{2}+\|\nabla_{h}\tilde{e}^{n+1}\|_{2}
 	\le C(\tau^{\frac{9}{4}}+h^{\frac{13}{4}}).
 \end{equation}
 The positive constant $C$ in \eqref{tilde phi es 2nd} is independent of the time step size $\tau$ and the spatial mesh size $h$. Therefore, proof of the rough error estimate is complete.
 \end{proof}
 
 The rough error estimate \eqref{rough error es 2nd} reveals that, at the $(n+1)$-th time level, the numerical error function satisfies the following discrete norm estimates:
 \begin{align}
 	&\|\tilde{e}^{n+1}\|_{\infty}
 	\le\frac{C\|\tilde{e}^{n+1}\|_{H_{h}^{1}}}{h^{\frac{1}{2}}}
 	\le \hat{C}(\tau^{\frac{7}{4}}+h^{\frac{11}{4}})
 	\le\frac{\epsilon_{0}^{\star}}{2},\label{err infty es 2nd}\\
 	&\|\tilde{e}^{n+1}\|_{4}
 	\le C\|\tilde{e}^{n+1}\|_{H_{h}^{1}}
 	\le C(\tau^{\frac{9}{4}}+h^{\frac{13}{4}}),\label{es for norm4 2nd}
 \end{align}
 where the 3D inverse inequality and the discrete Sobolev inequality given in Lemma~\ref{norm4} have been used.
 Its combination with the error estimate \eqref{err infty es 2nd} and the separation property of the corrected function~\eqref{Phi sep}, leads to a similar property for the numerical solution at time step $t^{n+1}$,
 \begin{equation}\label{sep for phi n1}
 	\frac{\epsilon_{0}^{\star}}{2}\le 1+\phi^{n+1}< 2,\qquad \frac{\epsilon_{0}^{\star}}{2}\le 1-\phi^{n+1}< 2.
 \end{equation}
 Moreover, by carefully utilizing \eqref{the regularity assumption} and \eqref{tilde phik infty es}, we can further establish a refined estimate involving the discrete time derivative:
 \begin{equation}\label{es for discrete temporal derivative}
 	\begin{aligned}
 		&\|\tilde{e}^{n+1}\|_{\infty}
 		\le \hat{C}(\tau^{\frac{7}{4}}+h^{\frac{11}{4}})\le\tau,\qquad
 		\|\tilde{e}^{n}\|_{\infty}
 		\le \hat{C}(\tau^{\frac{9}{4}}+h^{\frac{13}{4}})\le\tau,\\
 		&\|\tilde{e}^{n+1}-\tilde{e}^{n}\|_{\infty}
 		\le \|\tilde{e}^{n+1}\|_{\infty}+\|\tilde{e}^{n}\|_{\infty}
 		\le C(\tau^{\frac{7}{4}}+h^{\frac{11}{4}})\le\tau,\\
 		&\|\phi^{n+1}-\phi^{n}\|_{\infty}
 		\le \|\hat{\Phi}^{n+1}-\hat{\Phi}^{n}\|_{\infty}
 		+\|\tilde{e}^{n+1}-\tilde{e}^{n}\|_{\infty}
 		\le (C^{\star}+1)\tau.
 	\end{aligned}
 \end{equation}

 \subsection{A refined error estimate}
 Before embarking on the refined error analysis, preliminary estimates for the nonlinear part are likewise required. For simplicity of presentation, the complete proof is presented in Appendix C. 
 \begin{lemma}\label{nonlinear es in refine 2nd}
 	Assume that the time and spatial step sizes are sufficiently small and satisfy the linear constraint condition $C_{1}h\le\tau\le C_{2}h$. 
 	Under the following assumptions: the regularity condition~\eqref{the regularity assumption}; the $\ell^{\infty}$ a priori estimates~\eqref{phi infty es 2nd} and the separation property~\eqref{sep for phi n} for the numerical solution at the first two time steps; the separation property~\eqref{sep for phi n1} for the numerical solution at the next time step; and the discrete temporal derivative estimate~\eqref{es for discrete temporal derivative}, the nonlinear terms satisfy
 	\begin{equation}
 		\begin{aligned}
 			&\|\nabla_{h}\mathcal{N}_{q}^{n}\|_{2}^{2}
 			\le C^{(1)}\left(\|\nabla_{h}\tilde{e}^{n+1}\|_{2}^{2}+\|\nabla_{h}\tilde{e}^{n}\|_{2}^{2}\right),\quad q=1,\, 2,\\
 			&\|\nabla_{h}\mathcal{N}_{m}^{n}\|_{2}^{2}
 			\le C^{(2)}\|\nabla_{h}\tilde{e}^{n+1}\|_{2}^{2},\quad m=3,\, 5,\\
 			&\|\nabla_{h}\mathcal{N}_{l}^{n}\|_{2}^{2}
 			\le C^{(2)}\|\nabla_{h}\tilde{e}^{n}\|_{2}^{2},\quad l=4,\, 6.
 		\end{aligned}
 	\end{equation}
 	Positive constants $C^{(1)}$ and $C^{(2)}$ depend only on the separation parameter $\epsilon_{0}^{\star}$, the regularity parameter $C^{\star}$, the linear constraint constant $C_2$, as well as the Sobolev constants~$C$.
 \end{lemma}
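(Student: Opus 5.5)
The plan is to estimate each $\nabla_h\mathcal{N}$ by writing the nonlinear differences as smooth functions of the grid values and using a discrete chain/product rule for $D_x$ (and analogously $D_y,D_z$). For a grid function $F(u)$ with $F$ smooth on the separated range, $D_x F(u)_{i+1/2}=F'(\xi)D_xu_{i+1/2}$, and differences of products split as $D_x(fg)=A_xf\,D_xg+A_xg\,D_xf$. All arguments live in $[\epsilon_0^\star/2,2]$ by \eqref{Phi sep}, \eqref{sep for phi n} and \eqref{sep for phi n1}. Hence every derivative of $\ln$, $G$ and $H_a^1$ that appears is bounded by a constant depending on $\epsilon_0^\star$.

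\textbf{Logarithmic terms $\mathcal{N}_3,\dots,\mathcal{N}_6$.} Write
\[
\mathcal{N}_3^n=\int_0^1 \frac{\tilde e^{n+1}}{1+\phi^{n+1}+s\tilde e^{n+1}}\,ds=:g\,\tilde e^{n+1}.
\]
Then $D_x\mathcal{N}_3^n=A_xg\,D_x\tilde e^{n+1}+A_x\tilde e^{n+1}\,D_xg$. The first piece is bounded by $C\|\nabla_h\tilde e^{n+1}\|_2$.

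For the second piece, note that $D_xg$ involves $D_x\phi^{n+1}=D_x\hat\Phi^{n+1}-D_x\tilde e^{n+1}$ and $D_x\tilde e^{n+1}$. The term with $D_x\hat\Phi^{n+1}$ is bounded via \eqref{the regularity assumption}, giving $\|A_x\tilde e^{n+1}\|_2\le C\|\nabla_h\tilde e^{n+1}\|_2$ by Poincaré. The terms quadratic in the error, of the form $\tilde e^{n+1}D_x\tilde e^{n+1}$, are handled by $\|\tilde e^{n+1}\|_\infty\le\tau\le1$ from \eqref{es for discrete temporal derivative}. The same argument with $n$ in place of $n+1$ treats $\mathcal{N}_4,\mathcal{N}_6$, using the $\ell^\infty$ bound at level $n$. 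This yields the $C^{(2)}$ bounds.

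\textbf{Split of $\mathcal{N}_1$.} Use \eqref{N1n split}.

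For $\mathcal{N}_{11}^n$, write it as $\int_0^1 H^2_{1+\phi^n}(\cdot)\,ds\;\tilde e^{n+1}$. By Lemma~\ref{G prop}, $H^2_a(x)=1/(2\tilde\zeta)$ is bounded. Its discrete gradient additionally needs derivatives of $H^2_a(x)$ in both $a$ and $x$; these are smooth and bounded on the separated region. They bring in $D_x\phi^n$ and $D_x\phi^{n+1}$, which are handled exactly as above.

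For $\mathcal{N}_{12}^n$, the symmetry $H^1_a(b)=H^1_b(a)$ gives $\mathcal{N}_{12}^n=\int_0^1 H^2_{1+\hat\Phi^{n+1}}(\cdot)\,ds\;\tilde e^{n}$. This contributes the $\|\nabla_h\tilde e^n\|_2^2$ part. $\mathcal{N}_2$ is treated identically.

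\textbf{Main obstacle.} The delicate point is the cross term $A_x\tilde e\cdot D_x\tilde e\cdot(\text{bounded})$. In 3D we cannot bound $\|\tilde e\,\nabla_h\tilde e\|_2$ using only $\ell^4$ norms, because that would require $\|\nabla_h\tilde e\|_4$, which is not under control. I would therefore rely on the rough $\ell^\infty$ bound $\|\tilde e^{k}\|_\infty\le\tau$, $k=n,n+1$, from \eqref{err infty es 2nd}–\eqref{es for discrete temporal derivative}; this is where $C_1h\le\tau\le C_2h$ and $C_2$ enter the constants.

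A second subtlety is in $\mathcal{N}_{11}$: the arguments of $H^2$ also depend on $\phi^{n+1}-\phi^n$. Those gradients are controlled without loss because $\|\phi^{n+1}-\phi^n\|_\infty\le(C^\star+1)\tau$, and because $H^2_a(x)$ is smooth near the diagonal $x=a$ with bounded derivatives there. The mean-value representation keeps each derivative bounded uniformly in $\tau$.
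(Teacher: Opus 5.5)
Your proof is correct, but it differs from the paper's at the one delicate step: differentiating the nonlinear coefficient.

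The paper writes $\mathcal{N}_{11}^{n}=\tilde e^{n+1}/(2\xi^{(n+1)})$ with a pointwise mean-value point $\xi^{(n+1)}$. That point is not a smooth function of the grid data, so no chain rule is available for $D_x(1/\xi^{(n+1)})$. Instead, the paper uses $\|\tilde e^{k}\|_\infty\le\tau$ and $\|\phi^{n+1}-\phi^{n}\|_\infty\le (C^\star+1)\tau$ to place $\xi^{(n+1)}$ within $O(\tau)$ of the smooth reference $\frac12\big((1+\hat\Phi^{n+1})+(1+\hat\Phi^{n})\big)$. It then absorbs the $1/h$ of the difference quotient through $\tau\le C_2h$, which gives $h^{-1}\abs{1/\xi_{i+1,j,k}^{(n+1)}-1/\xi_{i,j,k}^{(n+1)}}\le C$. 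The terms $\mathcal{N}_3,\dots,\mathcal{N}_6$ are only cited.

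Your integral (divided-difference) form, e.g.\ $H^2_a(x)=\int_0^1 t\,(a+t(x-a))^{-1}\,dt$, makes every coefficient a genuine $C^1$ function of $(\hat\Phi^k,\phi^k)$ on the convex separated box. So the discrete product rule and a multivariate mean value theorem apply directly, and the resulting terms of the form $A_x\tilde e\,D_x\tilde e$ are controlled by an $\ell^\infty$ bound on $\tilde e$.

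What your route buys:
\begin{itemize}
\item It is cleaner and self-contained, and it handles $\mathcal{N}_3$--$\mathcal{N}_6$ explicitly.
\item Inside the lemma it needs only $\|\tilde e^{k}\|_\infty\le C$ for $k=n,n+1$. Since $H^2_a(x)$ is smooth uniformly on the whole separated region, not just near the diagonal, neither the time-increment bound nor $\tau\le C_2h$ is used there. Those matter only upstream, in producing the $\ell^\infty$ bound.
\end{itemize}
Your remarks assigning them a role (the ``second subtlety'' and the claim that $C_2$ enters the constants) are therefore superfluous rather than wrong.

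The paper's route keeps $D_x\tilde e$ out of the coefficient by trading it for $\tau/h\le C_2$, which is why $C_2$ appears in its constants.
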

 \begin{lemma}\label{lemma nonlocal}
 	For any $ k\ge 0$, define $\tilde{\psi}^{k}=(-\Delta_{h})^{-1}\tilde{e}^{k}$.  Then there exists a constant $C>0$, independent of $h$, such that
 	\begin{equation*}
 		\|\nabla_{h}\tilde{\psi}^{k}\|_{2}\le C \,\|\tilde{e}^{k}\|_{2}.
 	\end{equation*}
 	\begin{proof}
 		It is a direct consequence of the standard estimate $\|u\|_{-1,h}\le C \|u\|_{2}$, for any $u$ satisfies $\bar{u}=0$.
 	\end{proof}
 \end{lemma}
 
 \vspace{4mm}
 Now we perform the refined error estimate. Taking a discrete inner product with \eqref{new error fun 2nd} by $-\Delta_{h}(\frac{3}{4}\tilde{e}^{n+1}+\frac{1}{4}\tilde{e}^{n-1})$ leads to
\begin{equation}\label{refine inner}
	\begin{split}
		&-\frac{1}{\tau}\langle\tilde{e}^{n+1}-\tilde{e}^{n},\Delta_{h}(\frac{3}{4}\tilde{e}^{n+1}+\frac{1}{4}\tilde{e}^{n-1})\rangle
		-\left[\nabla_{h}\tilde{\mu}^{n+\frac{1}{2}},\breve{\mathcal{M}}^{n+\frac{1}{2}}\nabla_{h}\Delta_{h}(\frac{3}{4}\tilde{e}^{n+1}+\frac{1}{4}\tilde{e}^{n-1})\right]\\
		&\qquad=\left[\nabla_{h}\hat{\mu}^{n+\frac{1}{2}},\check{\tilde{\mathcal{M}}}^{n+\frac{1}{2}}\nabla_{h}\Delta_{h}(\frac{3}{4}\tilde{e}^{n+1}+\frac{1}{4}\tilde{e}^{n-1})\right]
		-\langle\omega^{n+\frac{1}{2}},\Delta_{h}(\frac{3}{4}\tilde{e}^{n+1}+\frac{1}{4}\tilde{e}^{n-1})\rangle,
	\end{split}
\end{equation}
where summation-by-parts formulas have been recalled. Regarding the temporal discretization part in~\eqref{refine inner}, 
\begin{equation}\label{2refine time discrete}
	\begin{split}
		&\quad-\langle\tilde{e}^{n+1}-\tilde{e}^{n},\Delta_{h}(\frac{3}{4}\tilde{e}^{n+1}+\frac{1}{4}\tilde{e}^{n-1})\rangle\\
		&=\frac{1}{2}\left[\nabla_{h}(\tilde{e}^{n+1}-\tilde{e}^{n}),\nabla_{h}(\tilde{e}^{n+1}+\tilde{e}^{n})\right]
		+\frac{1}{4}\left[\nabla_{h}(\tilde{e}^{n+1}-\tilde{e}^{n}),\nabla_{h}(\tilde{e}^{n+1}-2\tilde{e}^{n}+\tilde{e}^{n-1})\right]\\
		&\ge \frac{1}{2}\left(\|\nabla_{h}\tilde{e}^{n+1}\|_{2}^{2}
		-\|\nabla_{h}\tilde{e}^{n}\|_{2}^{2}\right)
		+\frac{1}{8}\left(\|\nabla_{h}(\tilde{e}^{n+1}-\tilde{e}^{n})\|_{2}^{2}
		-\|\nabla_{h}(\tilde{e}^{n-1}-\tilde{e}^{n})\|_{2}^{2}
		\right).
	\end{split}
\end{equation}
The bounded mobility derivative, mean value theorem and regularity condition~\eqref{mu hat bound 2nd} collectively give
\begin{equation}
	\begin{split}
		&\quad\left[\nabla_{h}\hat{\mu}^{n+\frac{1}{2}},\check{\tilde{\mathcal{M}}}^{n+\frac{1}{2}}\nabla_{h}\Delta_{h}(\frac{3}{4}\tilde{e}^{n+1}+\frac{1}{4}\tilde{e}^{n-1})\right]\\
		&\le C^{\star}\left(\frac{3}{2}M\|\tilde{e}^{n}\|_{2}
		+\frac{1}{2}M\|\tilde{e}^{n-1}\|_{2}\right)\cdot
		\|\nabla_{h}\Delta_{h}(\frac{3}{4}\tilde{e}^{n+1}+\frac{1}{4}\tilde{e}^{n-1})\|_{2}\\
		&\le \frac{3}{\mathcal{M}_{0}\varepsilon^{2}}(C^{\star})^2M^2
		\left(3\|\tilde{e}^{n}\|_{2}^{2}+\|\tilde{e}^{n-1}\|_{2}^{2}\right)
		+\frac{\mathcal{M}_{0}\varepsilon^{2}}{12} \|\nabla_{h}\Delta_{h}(\frac{3}{4}\tilde{e}^{n+1}+\frac{1}{4}\tilde{e}^{n-1})\|_{2}^{2}.
	\end{split}
\end{equation}
Estimate of the error term $\omega^{n+\frac{1}{2}}$ can be obtained by utilizing the norm relation $\|\cdot\|_{-1,h}\le C\|\cdot\|_{2}$ and Young's inequality:
\begin{equation}
	\begin{split}
		-\langle\omega^{n+\frac{1}{2}},\Delta_{h}(\frac{3}{4}\tilde{e}^{n+1}\!+\!\frac{1}{4}\tilde{e}^{n-1})\rangle
		&\le\|\omega^{n+\frac{1}{2}}\|_{-1,h}\cdot
		\|\nabla_{h}\Delta_{h}(\frac{3}{4}\tilde{e}^{n+1}\!+\!\frac{1}{4}\tilde{e}^{n-1})\|_{2}\\
		&\le \frac{3C^{2}}{\mathcal{M}_{0}\varepsilon^{2}}\|\omega^{n+\frac{1}{2}}\|_{2}^{2}
		\!+\!\frac{\mathcal{M}_{0}\varepsilon^{2}}{12}\|\nabla_{h}\Delta_{h}(\frac{3}{4}\tilde{e}^{n+1}\!+\!\frac{1}{4}\tilde{e}^{n-1})\|_{2}^{2}.
	\end{split}
\end{equation}
We now analyze the inner products involving the terms of $\tilde{\mu}^{n+\frac{1}{2}}$ in~\eqref{refine inner}. First, for the nonlocal term, Lemma~\ref{lemma nonlocal} together with the triangle inequality directly provides an upper bound for the extrapolated discrete mobility,
\begin{equation*}
\|\breve{\mathcal{M}}^{n+\frac{1}{2}}\|_{\infty}
\le \frac{3}{2}\|\breve{\mathcal{M}}(\phi^{n})\|_{\infty}
+\frac{1}{2}\|\breve{\mathcal{M}}(\phi^{n-1})\|_{\infty}
=2\mathcal{M}_{1}.
\end{equation*}
As a result,
\begin{equation}\label{2refine sigma}
	\begin{split}
		&\quad-\sigma\left[\nabla_{h}(-\Delta_{h})^{-1}\tilde{e}^{n+\frac{1}{2}},
		\breve{\mathcal{M}}^{n+\frac{1}{2}}\nabla_{h}\Delta_{h}(\frac{3}{4}\tilde{e}^{n+1}+\frac{1}{4}\tilde{e}^{n-1})\right]\\
		&\ge -\sigma\mathcal{M}_{1}
		\|\nabla_{h}\tilde{\psi}^{n+1}+\nabla_{h}\tilde{\psi}^{n}\|_{2}\cdot
		\|\nabla_{h}\Delta_{h}(\frac{3}{4}\tilde{e}^{n+1}+\frac{1}{4}\tilde{e}^{n-1})\|_{2}\\
		&\ge -\frac{3\sigma^2\mathcal{M}_{1}^{2}}{\mathcal{M}_{0}\varepsilon^{2}}
		\left(2C\|\tilde{e}^{n+1}\|_{2}^{2}+2C\|\tilde{e}^{n}\|_{2}^{2}\right)
		-\frac{\mathcal{M}_{0}\varepsilon^2}{12}\|\nabla_{h}\Delta_{h}(\frac{3}{4}\tilde{e}^{n+1}+\frac{1}{4}\tilde{e}^{n-1})\|_{2}^{2},
	\end{split}
\end{equation}
where $\tilde{\psi}^{k}=(-\Delta_{h})^{-1}\tilde{\phi^{k}},\, k=n,\, n+1$. Similarly, 
\begin{equation}\label{2refine theta}
	\begin{split}
		&\quad\theta_{0}\left[\nabla_{h}(\frac{3}{2}\tilde{e}^{n}-\frac{1}{2}\tilde{e}^{n-1}),\breve{\mathcal{M}}^{n+\frac{1}{2}}\nabla_{h}\Delta_{h}(\frac{3}{4}\tilde{e}^{n+1}+\frac{1}{4}\tilde{e}^{n-1})\right]\\
		&\ge -2\mathcal{M}_{1}\theta_{0}
		\|\nabla_{h}(\frac{3}{2}\tilde{e}^{n}-\frac{1}{2}\tilde{e}^{n-1})\|_{2}
		\|\nabla_{h}\Delta_{h}(\frac{3}{4}\tilde{e}^{n+1}\!+\!\frac{1}{4}\tilde{e}^{n-1})\|_{2}\\
		&\ge -\frac{12\mathcal{M}_{1}^{2}\theta_{0}^{2}}{\mathcal{M}_{0}\varepsilon^{2}}
		\left(3\|\nabla_{h}\tilde{e}^{n}\|_{2}^{2}+\|\nabla_{h}\tilde{e}^{n-1}\|_{2}^{2}\right)
		\!-\!\frac{\mathcal{M}_{0}\varepsilon^{2}}{12}\|\nabla_{h}\Delta_{h}(\frac{3}{4}\tilde{e}^{n+1}\!+\!\frac{1}{4}\tilde{e}^{n-1})\|_{2}^{2}.
	\end{split}
\end{equation}
The surface diffusion term is controlled by the uniform positive lower bound of the extrapolated discrete mobility (see Remark~\ref{check mob positive}), 
\begin{equation}\label{2refine epsilon}
	\begin{split}
		&\quad\varepsilon^{2}\left[\nabla_{h}\Delta_{h}(\frac{3}{4}\tilde{e}^{n+1}+\frac{1}{4}\tilde{e}^{n-1}),\breve{\mathcal{M}}^{n+\frac{1}{2}}\nabla_{h}\Delta_{h}(\frac{3}{4}\tilde{e}^{n+1}+\frac{1}{4}\tilde{e}^{n-1})\right]\\
		&\ge\frac{\mathcal{M}_{0}\varepsilon^2}{2}\|\nabla_{h}\Delta_{h}(\frac{3}{4}\tilde{e}^{n+1}+\frac{1}{4}\tilde{e}^{n-1})\|_{2}^{2}.
	\end{split}
\end{equation}
Estimates~\eqref{2refine sigma} - \eqref{2refine epsilon} complete the analysis of the linear parts in the expansion of $\tilde{\mu}^{n+\frac{1}{2}}$. For the nonlinear parts, we set $NLE^{n}:=\sum_{k=1}^{6}\mathcal{N}_{k}^{n}$ and, under the time step restriction $\tau \leq 1$,
\begin{equation*}
	\begin{split}
		&\quad-\left[\nabla_{h}\Big(\mathcal{N}_{1}^{n}+\mathcal{N}_{2}^{n}+\tau\sum_{k=3}^{6}\mathcal{N}_{k}^{n}\Big),
		\breve{\mathcal{M}}^{n+\frac{1}{2}}\nabla_{h}\Delta_{h}(\frac{3}{4}\tilde{e}^{n+1}+\frac{1}{4}\tilde{e}^{n-1})\right]\\
		&\ge-\langle\nabla_{h}NLE^{n},\breve{\mathcal{M}}^{n+\frac{1}{2}}\nabla_{h}\Delta_{h}(\frac{3}{4}\tilde{e}^{n+1}+\frac{1}{4}\tilde{e}^{n-1})\rangle\\
		&\ge -2\mathcal{M}_{1}\|\nabla_{h}NLE^{n}\|_{2}\cdot
		\|\nabla_{h}\Delta_{h}(\frac{3}{4}\tilde{e}^{n+1}+\frac{1}{4}\tilde{e}^{n-1})\|_{2}.
	\end{split}
\end{equation*}
Combining the nonlinear estimates from Lemma~\ref{nonlinear es in refine 2nd} with Young's inequality yields
\begin{equation*}
	\begin{split}
		&\quad\|\nabla_{h}NLE^{n}\|_{2}\cdot\|\nabla_{h}\Delta_{h}(\frac{3}{4}\tilde{e}^{n+1}\!+\!\frac{1}{4}\tilde{e}^{n-1})\|_{2}\\
		&\le\sum_{k=1}^{6}\left(\|\nabla_{h}\mathcal{N}_{k}^{n}\|_{2}\cdot
		\|\nabla_{h}\Delta_{h}(\frac{3}{4}\tilde{e}^{n+1}\!+\!\frac{1}{4}\tilde{e}^{n-1})\|_{2}\right)\\
		&\le\frac{36\mathcal{M}_{1}}{\mathcal{M}_{0}\varepsilon^{2}}
		\sum_{k=1}^{6}\|\nabla_{h}\mathcal{N}_{k}^{n}\|_{2}^{2}
		+\frac{6\mathcal{M}_{0}\varepsilon^{2}}{144\mathcal{M}_{1}}
		\|\nabla_{h}\Delta_{h}(\frac{3}{4}\tilde{e}^{n+1}\!+\!\frac{1}{4}\tilde{e}^{n-1})\|_{2}^{2}\\
		&=\frac{36\mathcal{M}_{1}}{\mathcal{M}_{0}\varepsilon^{2}}
		2(C^{(1)}\!+\! C^{(2)})\left(\|\nabla_{h}\tilde{e}^{n+1}\|_{2}^{2}\!+\!\|\nabla_{h}\tilde{e}^{n}\|_{2}^{2}\right)
		\!+\!\frac{\mathcal{M}_{0}\varepsilon^{2}}{24\mathcal{M}_{1}}
		\|\nabla_{h}\Delta_{h}(\frac{3}{4}\tilde{e}^{n+1}\!+\!\frac{1}{4}\tilde{e}^{n-1})\|_{2}^{2}.
	\end{split}
\end{equation*}
It therefore follows that
\begin{equation}\label{2refine nonlinear}
	\begin{split}
		&\quad-\left[\nabla_{h}\Big(\mathcal{N}_{1}^{n}+\mathcal{N}_{2}^{n}+\tau\sum_{k=3}^{6}\mathcal{N}_{k}^{n}\Big),
		\breve{\mathcal{M}}^{n+\frac{1}{2}}\nabla_{h}\Delta_{h}(\frac{3}{4}\tilde{e}^{n+1}+\frac{1}{4}\tilde{e}^{n-1})\right]\\
		&\ge-4\mathcal{M}_{1}(C^{(1)}+C^{(2)})\frac{36\mathcal{M}_{1}}{\mathcal{M}_{0}\varepsilon^{2}}\left(\|\nabla_{h}\tilde{e}^{n+1}\|_{2}^{2}+\|\nabla_{h}\tilde{e}^{n}\|_{2}^{2}\right)\\
		&\quad-\frac{\mathcal{M}_{0}\varepsilon^{2}}{12}
		\|\nabla_{h}\Delta_{h}(\frac{3}{4}\tilde{e}^{n+1}+\frac{1}{4}\tilde{e}^{n-1})\|_{2}^{2}.
	\end{split}
\end{equation}
Notice that
\begin{equation*}
	\begin{split}
		&\quad\|\nabla_{h}\Delta_{h}(\frac{3}{4}\tilde{e}^{n+1}+\frac{1}{4}\tilde{e}^{n-1})\|_{2}^{2}\\
		&\ge \frac{9}{16}\|\nabla_{h}\Delta_{h}\tilde{e}^{n+1}\|_{2}^{2}
		+\frac{1}{16}\|\nabla_{h}\Delta_{h}\tilde{e}^{n-1}\|_{2}^{2}
		-\frac{3}{16}(\|\nabla_{h}\Delta_{h}\tilde{e}^{n+1}\|_{2}^{2}
		+\|\nabla_{h}\Delta_{h}\tilde{e}^{n-1}\|_{2}^{2})\\
		&=\frac{3}{8}\|\nabla_{h}\Delta_{h}\tilde{e}^{n+1}\|_{2}^{2}
		-\frac{1}{8}\|\nabla_{h}\Delta_{h}\tilde{e}^{n-1}\|_{2}^{2}.
	\end{split}
\end{equation*}
Substituting~\eqref{2refine time discrete} - \eqref{2refine nonlinear} back into~\eqref{refine inner} and applying the discrete Sobolev and Poincar{\'e} inequalities, we finally obtain
\begin{equation*}
	\begin{split}
		&\quad\frac{1}{2\tau}\left(\|\nabla_{h}\tilde{e}^{n+1}\|_{2}^{2}-\|\nabla_{h}\tilde{e}^{n}\|_{2}^{2}\right)
		+\frac{1}{8\tau}\left(\|\nabla_{h}(\tilde{e}^{n+1}-\tilde{e}^{n})\|_{2}^{2}-\|\nabla_{h}(\tilde{e}^{n}-\tilde{e}^{n-1})\|_{2}^{2}\right)\\
		&\qquad+\frac{\mathcal{M}_{0}\varepsilon^{2}}{12}\left(\frac{3}{8}\|\nabla_{h}\Delta_{h}\tilde{e}^{n+1}\|_{2}^{2}
		-\frac{1}{8}\|\nabla_{h}\Delta_{h}\tilde{e}^{n-1}\|_{2}^{2}\right)\\
		&\le A_1\|\nabla_{h}\tilde{e}^{n+1}\|_{2}^{2}
		+A_2\|\nabla_{h}\tilde{e}^{n}\|_{2}^{2}
		+A_3\|\nabla_{h}\tilde{e}^{n-1}\|_{2}^{2}
		+\frac{3C^{2}}{\mathcal{M}_{0}\varepsilon^{2}}\|\omega^{n+\frac{1}{2}}\|_{2}^{2},
	\end{split}
\end{equation*}
where the constants take the form
\begin{equation*}
	\begin{aligned}
		A_1 &
		=\frac{6\mathcal{M}_{1}^{2}}{\mathcal{M}_{0}\varepsilon^{2}}\left[24(C^{(1)}\!+\! C^{(2)})+\sigma^2CC_{p}\right],\quad
		A_3 
		=\frac{3}{\mathcal{M}_{0}\varepsilon^{2}}\left[(C^{\star})^2M^2C_{p}+4\mathcal{M}_{1}^{2}\theta_{0}^{2}\right],\\
		A_2 &=\frac{3}{\mathcal{M}_{0}\varepsilon^{2}}
		\left[3(C^{\star})^2M^2C_{p}
		+\mathcal{M}_{1}^{2}\left(48(C^{(1)}+ C^{(2)})+12\theta_{0}^{2}+2\sigma^2C \, C_{p}\right)\right].
	\end{aligned}
\end{equation*}
For sufficiently small time step size $\tau$ and spatial mesh size $h$, the discrete Gronwall inequality yields the following higher-order convergence estimate:
\begin{equation}\label{final 2 2nd}
	\|\nabla_{h} \tilde{e}^{n+1}\|_{2}+\Big(\frac{\mathcal{M}_{0}\varepsilon^2}{24}\tau\sum_{k=1}^{n+1}\|\nabla_{h}\Delta_{h}\tilde{e}^{k}\|_{2}^{2} \Big)^{1/2}\le C(\tau^{3}+h^{4}).
\end{equation} 

Furthermore, estimate~\eqref{final 2 2nd} implies that the numerical error function satisfies
\begin{equation*}
	\|\nabla_{h}\tilde{e}^{n+1}\|_{2}
	\le C(\tau^3+h^4)
	\le\tau^{\frac{11}{4}}+h^{\frac{15}{4}}.
\end{equation*}
Consequently, for the time step $t_{n+1}$, the a priori assumption~\eqref{priori assum 2nd} remains valid, ensuring the effectiveness of the inductive argument. The convergence result stated in Theorem~\ref{error analysis 2order} then follows directly from the higher-order convergence estimate~\eqref{final 2 2nd}, the boundedness of the constructed fields $\Phi_{\tau}$ and $\Phi_{h}$, and the projection estimate~\eqref{standard estimate}. This completes the proof of Theorem~\ref{error analysis 2order}.

\section{Numerical experiments}
An efficient nonlinear Full Approximation Storage (FAS) multigrid solver is deployed to solve the proposed second order numerical scheme~\eqref{2order}. For algorithmic implementations of similar solvers, we refer the readers to \cite{chen19b}. Furthermore, while our theoretical framework is fully valid for both two- and three-dimensional situations, the subsequent numerical experiments are restricted to the two-dimensional setting to balance computational focus and clarity.

\subsection{Convergence test}
We first conduct a convergence test. The computational domain is set to $\Omega=(0,2)^2$, with physical parameters $\varepsilon=0.2,\,\theta_{0}=3.0$, and $\sigma=1\times10^{-3}$. The smooth initial condition is given by
\begin{equation}\label{conver initial}
	\phi(x,y,0)=0.25\cos 2\pi x\cos 2\pi y+0.6\cos\pi x\cos 3\pi y.
\end{equation}
Referring to the conclusions in \cite{cahn1994overview}, the variable mobility is selected as 
\begin{equation}\label{test mob}
	\mathcal{M}(\phi)=0.9 (1-\phi)(1+\phi)+0.1.
\end{equation}
Furthermore, the refinement path is set as $\tau=0.01h$, so that the second order accuracy in both time and space could be confirmed. To determine the convergence rate, the error function is computed between approximate solutions obtained by successively finer mesh sizes. Results of the numerical convergence test at the final time $T=0.1$, presented in Table~\ref{table ct}, confirm the second order accuracy in space and time.
	\begin{table}[ht]
	\centering
	\caption{Numerical convergence test with initial data \eqref{conver initial}}
	\label{table ct}
		\begin{tabular}[c]{cccccc}
			\toprule
			Grid size & $16^{2}-32^{2}$ & $32^{2}-64^{2}$ & $64^{2}-128^{2}$ & $128^{2}-256^{2}$ & $256^{2}-512^{2}$ \\
			\midrule
			$L^{2}$ error & $9.6786\mathrm{E-}05$ & $2.3800\mathrm{E-}05$ & $5.9160\mathrm{E-}06$ & $1.4914\mathrm{E-}06$ & $3.7573\mathrm{E-}07$\\
			$L^{2}$ rate &  & $2.0238$ & $2.0083$ & $1.9880$ & $1.9890$ \\
			$L^{\infty}$ error & $2.2333\mathrm{E-}04$ & $5.6895\mathrm{E-}05$ & $1.4197\mathrm{E-}05$ & $3.5822\mathrm{E-}06$ & $9.0276\mathrm{E-}07$ \\
			$L^{\infty}$ rate &  & $1.9728$ & $2.0027$ & $1.9867$ & $1.9884$ \\
			\bottomrule
		\end{tabular}
\end{table}

\subsection{Spinodal decomposition}
The logarithmic nonlinearity and the nonlocal term tend to generate numerous small structures. In this subsection, we present numerical simulations to illustrate the coarsening dynamics and to examine the positivity preservation, energy decay, and mass conservation simultaneously.

The computational domain is $\Omega=(0,2\pi)^{2}$, with parameters $\varepsilon=0.06$, $\theta_{0}=3.0$, $h=1/256$, and $\tau=1.0\times 10^{-3}$. We initiate the two-step numerical scheme by taking $\phi^{-1}=\phi^0$ with the data $\phi^0$ specified as
\begin{equation}\label{initial}
	\phi_{i,j}^{0}=\overline{\phi^{0}}+0.01\, r_{i,j},
\end{equation}
where $r_{i,j}$ denotes a uniformly distributed random variable in $[-1,1]$ with zero mean. The variable mobility is the same as in~\eqref{test mob}. The time snapshots of the
evolution for $\sigma = 5$, $\overline{\phi^{0}} =0$ are presented in Figure~\ref{randomfigure_2varmobCH_1phirandom256sigma5initial0}. 
\begin{figure}[h!]
	\centering
	\begin{minipage}{\textwidth}
		\centering
		\begin{subfigure}[b]{0.28\textwidth}
			\includegraphics[width=\linewidth]{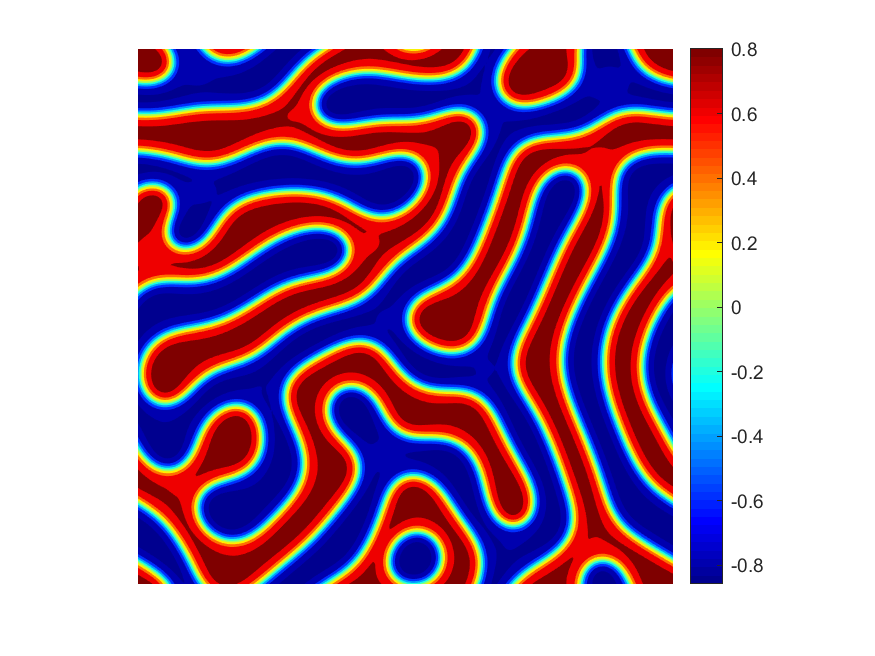}
			\caption{$t=0.25$}
		\end{subfigure}
		\hspace{-1cm}
		\begin{subfigure}[b]{0.28\textwidth}
			\includegraphics[width=\linewidth]{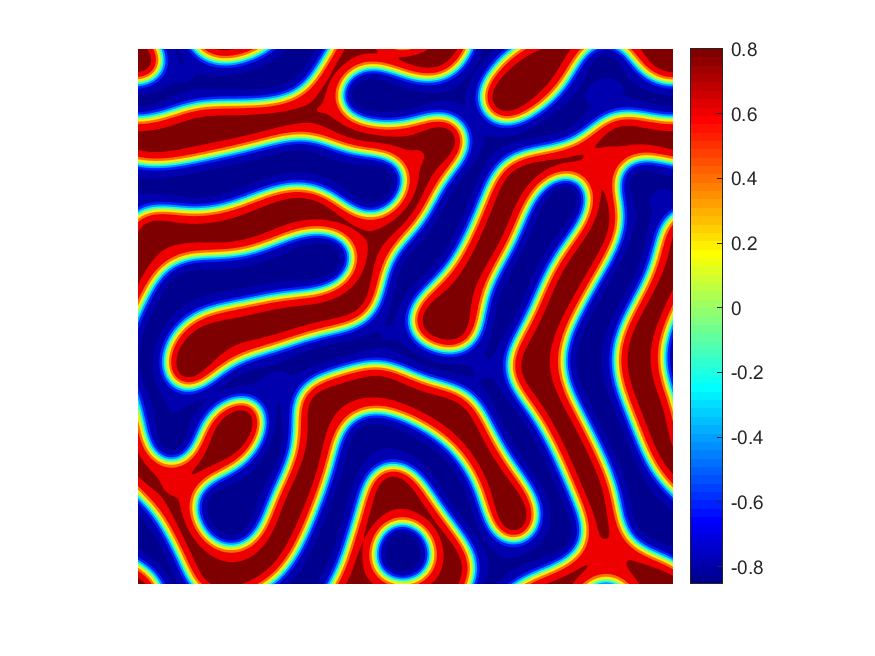}
			\caption{$t=1$}
		\end{subfigure}
		\hspace{-1cm}
		\begin{subfigure}[b]{0.28\textwidth}
			\includegraphics[width=\linewidth]{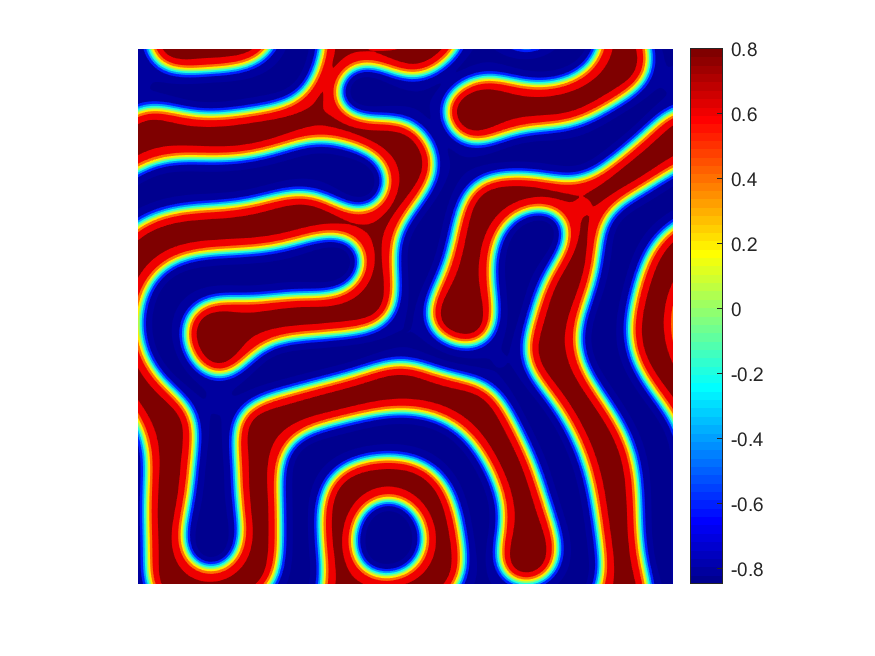}
			\caption{$t=100$}
		\end{subfigure}
		\hspace{-1cm}
		\begin{subfigure}[b]{0.28\textwidth}
			\includegraphics[width=\linewidth]{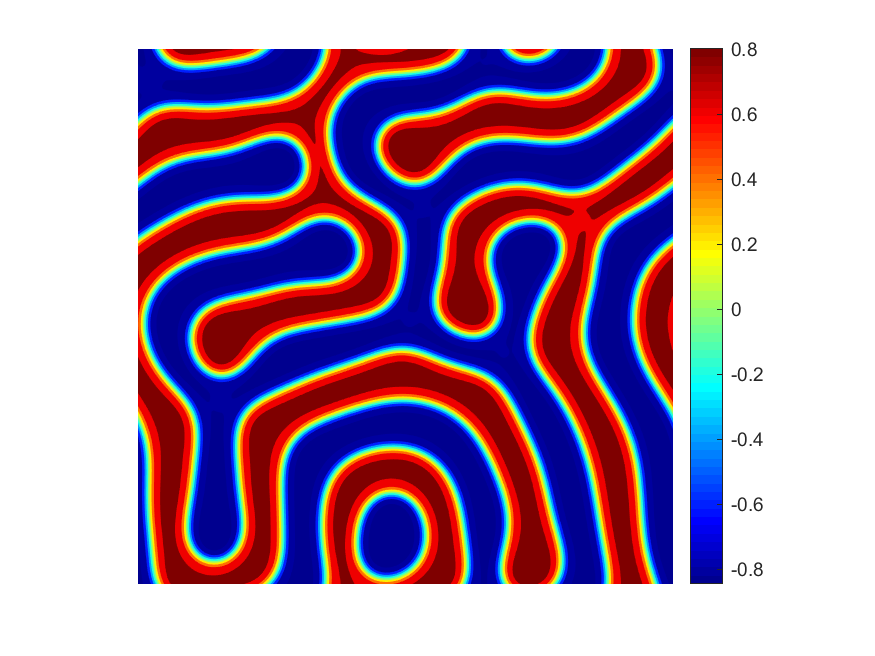}
			\caption{$t=1200$}
		\end{subfigure}
	\end{minipage}
	\caption{Evolution of the phase variable $\phi$ at selected time instants. The simulation uses the mobility and initial condition specified in~\eqref{test mob} and~\eqref{initial}, with parameters $\sigma = 5$ and $\overline{\phi^{0}} =0$.}
	\label{randomfigure_2varmobCH_1phirandom256sigma5initial0}
\end{figure}
Due to the presence of the nonlocal term, the system exhibits numerous microstructures right from the start until to the energy stabilizes.

To demonstrate the phase separation property of the NCH system, we present the minimum and maximum values of the phase variable at the specified time instants in Table~\ref{maxmin_sigma5initial0}. Although these extrema are in close proximity to the theoretical singular limit values of $-1$ and $1$, a clear separation from these limits is observed in the numerical results.

\begin{table}[h!]
	\centering
	\caption{The extrema of the phase variable $\phi$ during spinodal decomposition at the indicated time instants, with $\overline{\phi^{0}} =0$, $\sigma=5$.}
	\label{maxmin_sigma5initial0}
	\begin{tabular}[c]{ccc}
		\toprule
		Time instants  & the minimum values  & the maximum values \\
		\midrule
		0.25 & -0.8580659 & 0.8510982 \\
		0.50 & -0.8560963 & 0.8500969 \\
		1    & -0.8539558 & 0.8479956 \\
		5    & -0.8470644 & 0.8416601 \\
		10   & -0.8467009 & 0.8418581 \\
		100  & -0.8470313 & 0.8422207 \\
		600  & -0.8441764 & 0.8410790 \\
		\bottomrule
	\end{tabular}
\end{table}

Figure~\ref{mass_energy} presents a log-log plot of energy versus time, where the discrete energy is defined as in Theorem~\ref{dis ener theo}, along with the mass error relative to the initial state. The results clearly confirm that scheme~\eqref{2order} is both energy stable and mass conservative, consistent with our theoretical analysis.
\begin{figure}[h!]
	\centering
	\begin{minipage}{\textwidth}
		\centering
		\begin{subfigure}[b]{0.45\textwidth}
			\includegraphics[width=\linewidth]{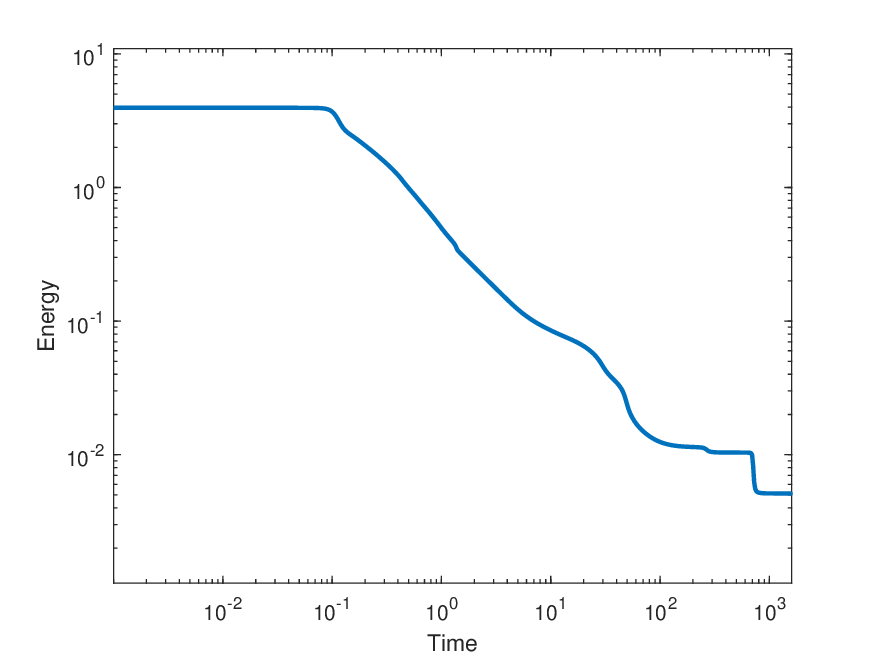}
			\caption{Energy decay}
		\end{subfigure}
		\hfill
		\begin{subfigure}[b]{0.45\textwidth}
			\includegraphics[width=\linewidth]{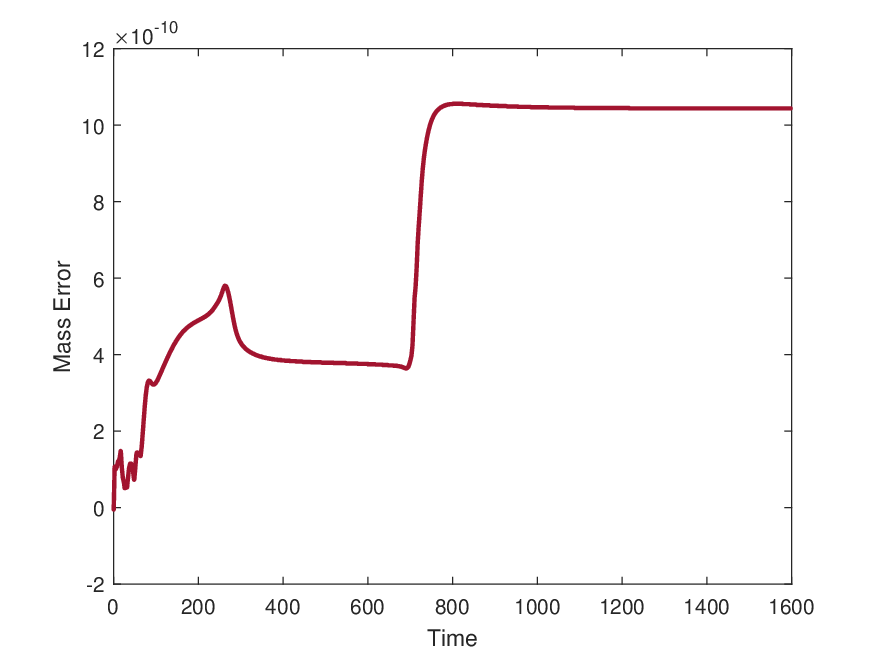}
			\caption{Mass error}
		\end{subfigure}
	\end{minipage}
	\caption{Energy decay and mass error for $\sigma = 5$, $\overline{\phi^{0}} =0$.}
	\label{mass_energy}
\end{figure}

Same experiments are performed for the case $\sigma=10$, $\overline{\phi^{0}} = 0.3$. Due to the different initial conditions, specifically the ratio between the two components, together with the enhanced nonlocal interactions, the coarsening dynamics exhibit notably distinct structures (see Figure~\ref{randomfigure_2varmobCH_1phirandom256sigma10initial03}). Similarly, the minimum and maximum values of the phase variable over the corresponding time sequence are presented in Table~\ref{maxmin_sigma10initial03}. As before, a clear separation is observed between the numerical solution and the singular limit values. Furthermore, Figure~\ref{mass_energy1} demonstrates that the scheme continues to preserve the expected energy decay and mass conservation properties.

\begin{figure}[h!]
	\centering
	\begin{minipage}{\textwidth}
		\centering
		\begin{subfigure}[b]{0.28\textwidth}
			\includegraphics[width=\linewidth]{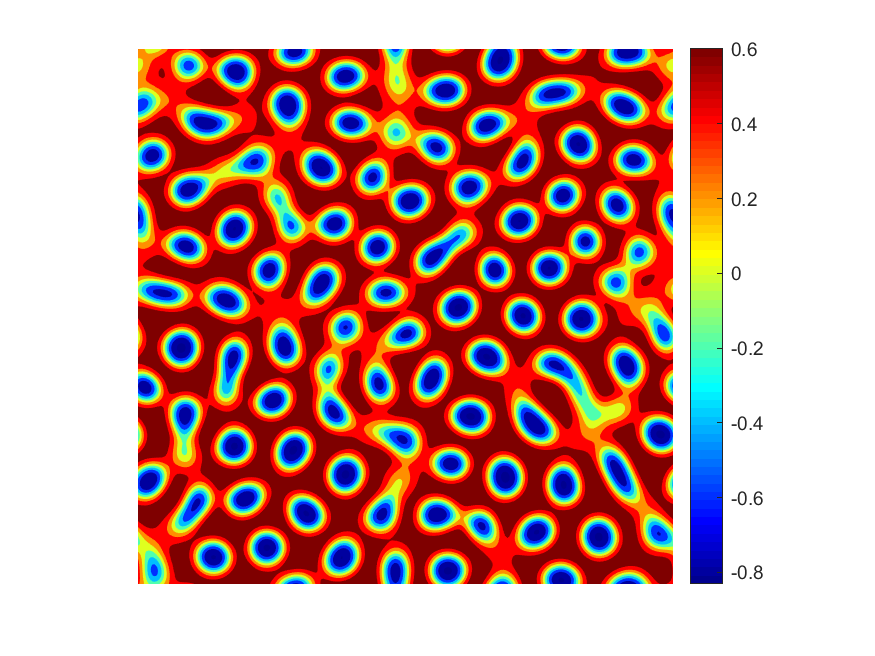}
			\caption{$t=0.25$}
		\end{subfigure}
		\hspace{-1cm}
		\begin{subfigure}[b]{0.28\textwidth}
			\includegraphics[width=\linewidth]{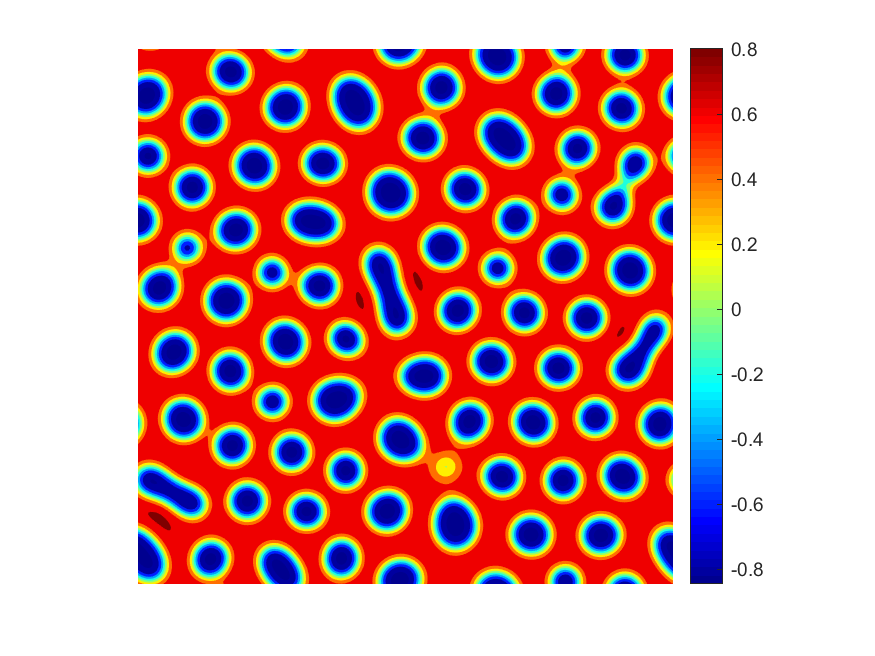}
			\caption{$t=1$}
		\end{subfigure}
		\hspace{-1cm}
		\begin{subfigure}[b]{0.28\textwidth}
			\includegraphics[width=\linewidth]{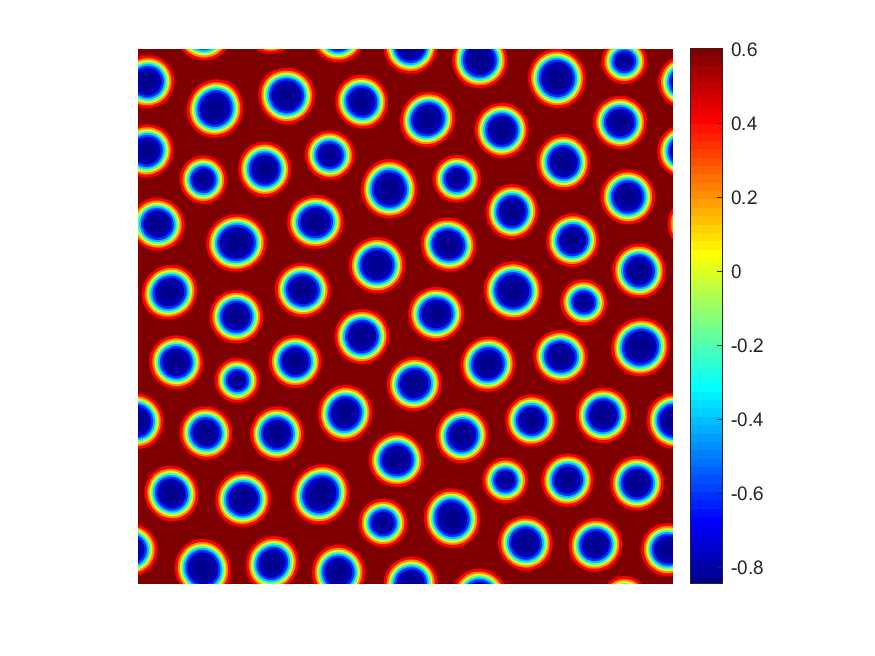}
			\caption{$t=100$}
		\end{subfigure}
		\hspace{-1cm}
		\begin{subfigure}[b]{0.28\textwidth}
			\includegraphics[width=\linewidth]{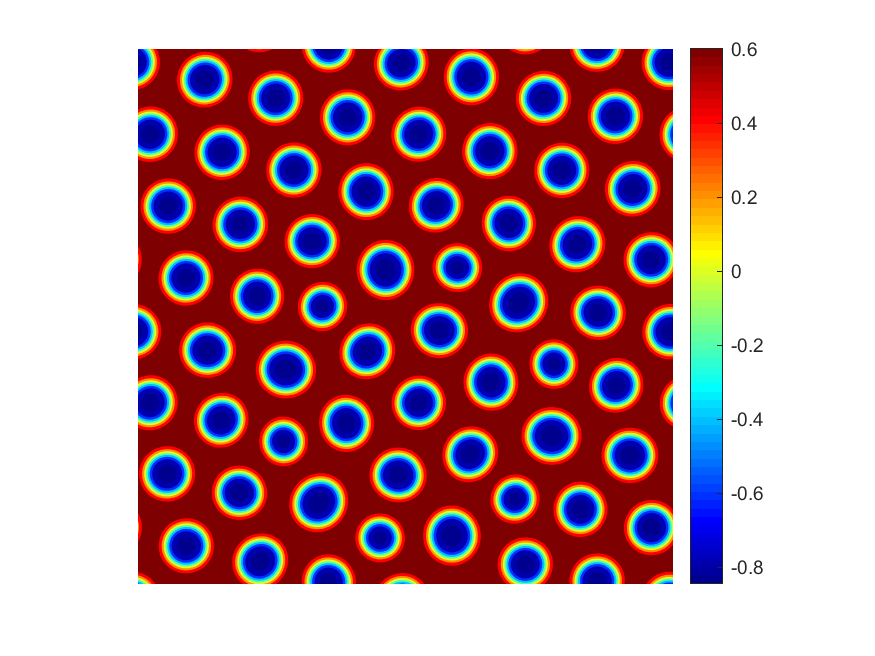}
			\caption{$t=600$}
		\end{subfigure}
	\end{minipage}
	
	\caption{Evolution of the phase variable $\phi$ at selected time instants. The simulation uses the mobility and initial condition specified in~\eqref{test mob} and~\eqref{initial}, with parameters $\sigma = 10$ and $\overline{\phi^{0}} =0.3$.}
	\label{randomfigure_2varmobCH_1phirandom256sigma10initial03}
\end{figure}

\begin{table}[h!]
	\centering
	\caption{The extrema of the phase variable $\phi$ during spinodal decomposition at the indicated time instants, with $\overline{\phi^{0}} =0.3$, $\sigma=10$.}
	\label{maxmin_sigma10initial03}
	\begin{tabular}[c]{ccc}
		\toprule
		Time instants  & the minimum values  & the maximum values \\
		\midrule
		0.25 & -0.8310225 & 0.7858210 \\
		0.50 & -0.8454229 & 0.8016763 \\
		1    & -0.8454691 & 0.8105549 \\
		5    & -0.8453975 & 0.7871832 \\
		10   & -0.8452209 & 0.7865080 \\
		100  & -0.8454444 & 0.7871405 \\
		600  & -0.8451800 & 0.7844330 \\
		\bottomrule
	\end{tabular}
\end{table}

\begin{figure}[h!]
	\centering
	\begin{minipage}{\textwidth}
		\centering
		\begin{subfigure}[b]{0.48\textwidth}
			\includegraphics[width=\linewidth]{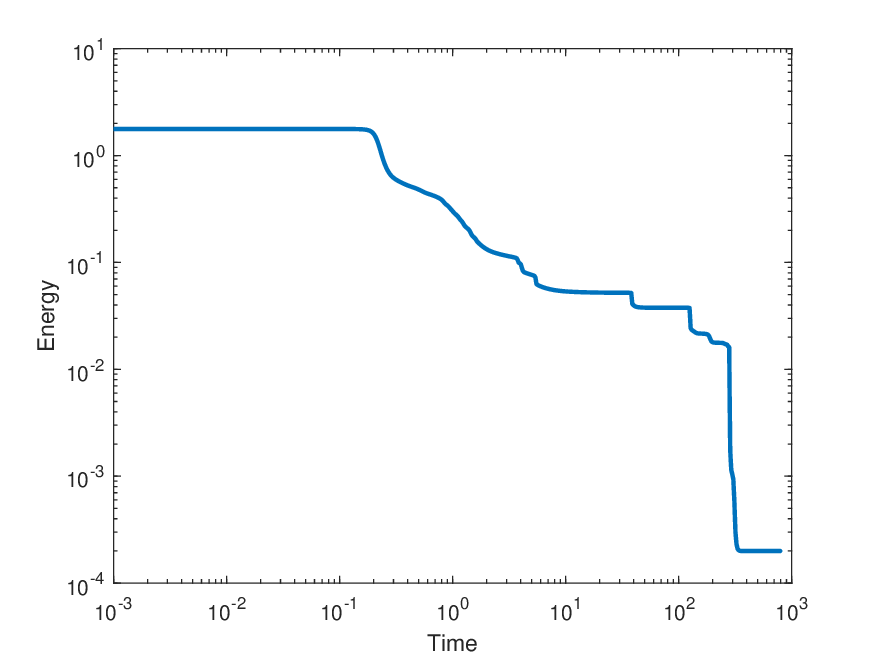}
			\caption{Energy decay}
		\end{subfigure}
		\hfill
		\begin{subfigure}[b]{0.48\textwidth}
			\includegraphics[width=\linewidth]{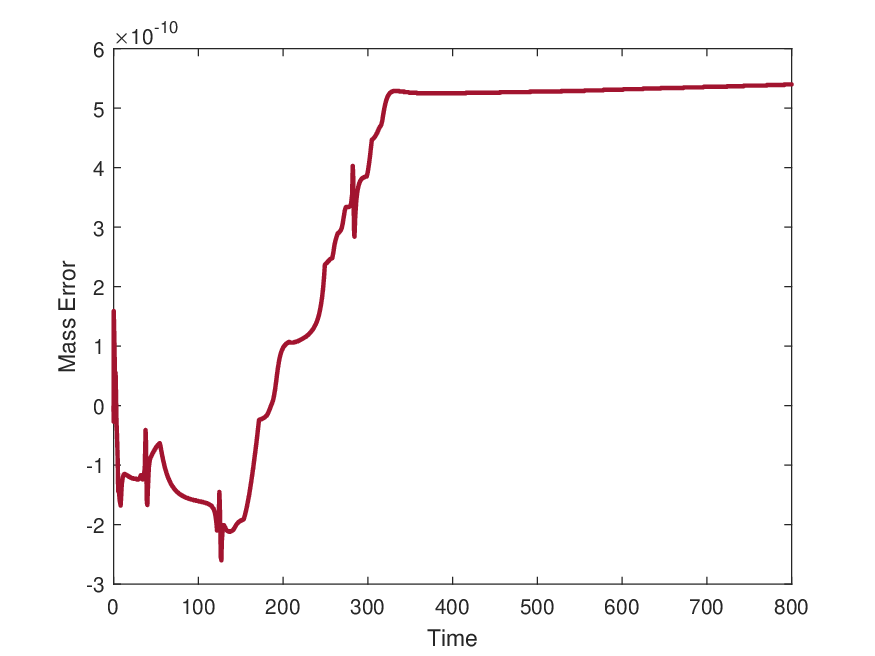}
			\caption{Mass error}
		\end{subfigure}
	\end{minipage}
	\caption{Energy decay and mass error for $\sigma = 10$, $\overline{\phi^{0}} =0.3$.}
	\label{mass_energy1}
\end{figure}

\section{Conclusions}
In this paper, we develop and analyze a second order accurate finite difference scheme in both time and space for the nonlocal Cahn-Hilliard (NCH) system with variable mobility and the singular Flory-Huggins logarithmic potential. The scheme combines a modified Crank-Nicolson discretization with a nonlinear artificial regularization, and treats the mobility explicitly to ensure strict ellipticity and reduce computational cost. Our rigorous analysis shows that the scheme enjoys all essential properties: unique solvability, positivity preservation, and discrete energy stability. The singular feature of the logarithmic function plays an important role in the proof of the positivity-preserving property pointwisely. To overcome the analytical difficulties arising from the variable mobility and the singular potential, we establish a convergence framework based on higher-order consistency analysis combined with rough and refined error estimates. Numerical experiments confirm the theoretical results and demonstrate the efficiency of our scheme.

\section*{Acknowledgements}
The research of Z.R.~Zhang was partially supported by the NSFC No.12471363 and No.12231003.

\bibliographystyle{amsplain}
\bibliography{reference}

\appendix
\renewcommand{\thesection}{Appendix \Alph{section}:}  
\renewcommand{\theequation}{\Alph{section}.\arabic{equation}}

\section{Proof of Proposition \ref{prop higher order}}
We begin with the local truncation error analysis for the temporal discretization by employing a Taylor expansion in time, in conjunction with the projection estimate \eqref{standard estimate}:
\begin{equation}
	\begin{aligned}
		\frac{\Phi_{N}^{n+1}-\Phi_{N}^{n}}{\tau}
		&=\nabla\cdot\left(\mathcal{M}_N^{n+\frac{1}{2}}\nabla\mu_N^{n+\frac{1}{2}}\right)
		+\tau^2(K^{(0)})^{n+\frac{1}{2}}+O(\tau^3)+O(h^{m_0}),
		\label{consis1 2nd}\\
		\mu_N^{n+\frac{1}{2}}
		&=\frac{G(1+\Phi_{N}^{n+1})-G(1+\Phi_{N}^{n})}{\Phi_{N}^{n+1}-\Phi_{N}^{n}}
		+\frac{G(1-\Phi_{N}^{n+1})-G(1-\Phi_{N}^{n})}{\Phi_{N}^{n+1}-\Phi_{N}^{n}}\\
		&\quad-\theta_{0}\check{\Phi}_{N}^{n+\frac{1}{2}}
		-\varepsilon^2\Delta\hat{\Phi}_{N}^{n+\frac{1}{2}}
		+\sigma(-\Delta)^{-1}\left(\Phi_{N}^{n+\frac{1}{2}}
		-\overline{\Phi_{N}^{n+\frac{1}{2}}}\right)\\
		&\quad+\tau \,
		\Big(\ln(1+\Phi_{N}^{n+1})-\ln(1+\Phi_{N}^{n})
		-\ln(1-\Phi_{N}^{n+1})+\ln(1-\Phi_{N}^{n})\Big),
	\end{aligned}
\end{equation}
where
\begin{equation}\label{check Phi N}
	\begin{aligned}
		&\mathcal{M}_N^{n+\frac{1}{2}}=\frac{3}{2}\mathcal{M}(\Phi_{N}^{n})-\frac{1}{2}\mathcal{M}(\Phi_{N}^{n-1}),\quad
		&&\check{\Phi}_{N}^{n+\frac{1}{2}}=\frac{3}{2}\Phi_{N}^{n}-\frac{1}{2}\Phi_{N}^{n-1},\\
		&\hat{\Phi}_{N}^{n+\frac{1}{2}}=\frac{3}{4}\Phi_{N}^{n+1}+\frac{1}{4}\Phi_{N}^{n-1},\;
		&&\Phi_{N}^{n+\frac{1}{2}}=\frac{1}{2}\Phi_{N}^{n+1}+\frac{1}{2}\Phi_{N}^{n}.
	\end{aligned}
\end{equation}
Here, the projection accuracy order satisfies $m_{0}\ge 4$, and the function $K^{(0)}$, which depends only on the higher-order derivatives of $\Phi_{N}$, is sufficiently smooth in the sense that its derivatives are bounded. Moreover, by the mass conservation identity 
\begin{equation*}
\int_{\Omega}(\Phi_{N}^{n+1}-\Phi_{N}^{n})\,\mathrm{d}\mathbf{x}=0,\quad
\int_{\Omega}\nabla\cdot\left(\mathcal{M}_N^{n+\frac{1}{2}}\nabla\mu_N^{n+\frac{1}{2}}\right)\,\mathrm{d}\mathbf{x}=0,
\end{equation*}
we deduce that $K^{(0)}$ has a mean-zero property,
\begin{equation}\label{g0 2nd}
	\int_{\Omega}(K^{(0)})^{n+\frac{1}{2}} \,\mathrm{d}\mathbf{x}=0.
\end{equation}
The time-correction function $\Phi_{\tau}$ is defined as the solution to
\begin{align}
	&\partial_{t}\Phi_{\tau}
	=\nabla\cdot\left(\mathcal{M}(\Phi_{N})\nabla\mathcal{V}_{\tau}
	+\mathcal{M}'(\Phi_{N})\Phi_{\tau}\nabla\mathcal{W}_{N}\right)-K^{(0)},
	\label{time correction function 1 2nd}\\
	&\mathcal{V}_{\tau}
	=\frac{\Phi_{\tau}}{1+\Phi_{N}}+\frac{\Phi_{\tau}}{1-\Phi_{N}}
	-\theta_{0}\Phi_{\tau}-\varepsilon^{2}\Delta\Phi_{\tau}
	+\sigma(-\Delta)^{-1}\Phi_{\tau},\\
	&\mathcal{W}_{N}
	=\ln(1+\Phi_{N})-\ln(1-\Phi_{N})
	-\theta_{0}\Phi_{N}-\varepsilon^{2}\Delta\Phi_{N}
	+\sigma(-\Delta)^{-1}(\Phi_{N}-\overline{\Phi_{N}}),
	\label{time correction function 3 2nd}
\end{align}
with periodic boundary conditions. The existence of a solution to this linear PDE system is straightforward. Since $\Phi_{\tau}$ depends only on the projection solution $\Phi_{N}$, which possesses sufficient smoothness, all derivatives of $\Phi_{\tau}$ are bounded. Trivial initial data $\Phi_{\tau}(\cdot,t=0)\equiv 0$ could be imposed to \eqref{time correction function 1 2nd} - \eqref{time correction function 3 2nd}. In combination with the identity \eqref{g0 2nd}, we get 
\[
\int_{\Omega}\partial_{t}\Phi_{\tau}\,\mathrm{d}\mathbf{x}=\int_{\Omega}\nabla\cdot\left(\mathcal{M}(\Phi_{N})\nabla\mathcal{V}_{\tau}+\mathcal{M}'(\Phi_{N})\Phi_{\tau}\nabla\mathcal{W}_{N}\right)\,\mathrm{d}\mathbf{x}-\int_{\Omega}K^{(0)}\,\mathrm{d}\mathbf{x}=0,
\]
so that the mass conservative property is valid for $\Phi_{\tau}$:
\begin{equation}\label{massconser_phi_deltat 2nd}
	\int_{\Omega}\Phi_{\tau}(\cdot,t)\,\mathrm{d}\mathbf{x}=\int_{\Omega}\Phi_{\tau}(\cdot,t=0)\,\mathrm{d}\mathbf{x}=0,\quad\forall\, t>0.
\end{equation}
An application of the semi-implicit discretization (as given by \eqref{consis1 2nd}) to \eqref{time correction function 1 2nd} - \eqref{time correction function 3 2nd} implies
\begin{align}
	\frac{\Phi_{\tau}^{n+1}-\Phi_{\tau}^{n}}{\tau}
	&=\nabla\cdot\left[\mathcal{M}_{N}^{n+\frac{1}{2}}\nabla\mathcal{V}_{\tau}^{n+\frac{1}{2}}
	+\left(\frac{3}{2}\mathcal{M}'(\Phi_{N}^{n})\Phi_{\tau}^{n}
	-\frac{1}{2}\mathcal{M}'(\Phi_{N}^{n-1})\Phi_{\tau}^{n-1}\right)\nabla\mathcal{W}_{N}^{n+\frac{1}{2}}\right]
	\nonumber\\
	&\quad-(K^{(0)})^{n+\frac{1}{2}}+\tau^2h_1^{n+\frac{1}{2}}+O(\tau^3),
	\label{time corre func 1 2nd}\\
	\mathcal{V}_{\tau}^{n+\frac{1}{2}}
	&=\frac{\Phi_{\tau}^{n+\frac{1}{2}}}{1+\Phi_{N}^{n+\frac{1}{2}}}
	+\frac{\Phi_{\tau}^{n+\frac{1}{2}}}{1-\Phi_{N}^{n+\frac{1}{2}}}
	-\theta_{0}\check{\Phi}_{\tau}^{n+\frac{1}{2}}
	-\varepsilon^{2}\Delta\hat{\Phi}_{\tau}^{n+\frac{1}{2}}
	+\sigma(-\Delta)^{-1}\Phi_{\tau}^{n+\frac{1}{2}},\\
	\mathcal{W}_{N}^{n+\frac{1}{2}}
	&=\frac{G(1+\Phi_{N}^{n+1})-G(1+\Phi_{N}^{n})}{\Phi_{N}^{n+1}-\Phi_{N}^{n}}
	+\frac{G(1-\Phi_{N}^{n+1})-G(1-\Phi_{N}^{n})}{\Phi_{N}^{n+1}-\Phi_{N}^{n}}
	-\theta_{0}\check{\Phi}_{N}^{n+\frac{1}{2}}
	\label{time corre func 3 2nd}\\
	&\quad -\varepsilon^2\Delta\hat{\Phi}_{N}^{n+\frac{1}{2}}
	+\sigma(-\Delta)^{-1}\left(\Phi_{N}^{n+\frac{1}{2}}
	-\overline{\Phi_{N}^{n+\frac{1}{2}}}\right),
	\nonumber
\end{align}
with
\begin{equation*}
	\check{\Phi}_{\tau}^{n+\frac{1}{2}}
	=\frac{3}{2}\Phi_{\tau}^{n}-\frac{1}{2}\Phi_{\tau}^{n-1},\quad
	\hat{\Phi}_{\tau}^{n+\frac{1}{2}}
	=\frac{3}{4}\Phi_{\tau}^{n+1}+\frac{1}{4}\Phi_{\tau}^{n-1},\quad
	\Phi_{\tau}^{n+\frac{1}{2}}
	=\frac{1}{2}\Phi_{\tau}^{n+1}+\frac{1}{2}\Phi_{\tau}^{n}.
\end{equation*}
$\mathcal{M}_{N}^{n+\frac{1}{2}},\,\check{\Phi}_{N}^{n+\frac{1}{2}},\,\hat{\Phi}_{N}^{n+\frac{1}{2}},\,\Phi_{N}^{n+\frac{1}{2}}$ have been defined in \eqref{check Phi N}.
Therefore, a combination of \eqref{consis1 2nd} and the Fourier projection of \eqref{time corre func 1 2nd} - \eqref{time corre func 3 2nd} leads to the desired third-order temporal consistency estimate for $ \hat{\Phi}_{1}=\Phi_{N}+\tau^2\mathcal{P}_{N}\Phi_{\tau}$:
\begin{equation*}
	\begin{aligned}
		\frac{\hat{\Phi}_{1}^{n+1}-\hat{\Phi}_{1}^{n}}{\tau}
		&=\nabla\cdot\left[\left(\frac{3}{2}\mathcal{M}(\hat{\Phi}_{1}^{n})-\frac{1}{2}\mathcal{M}(\hat{\Phi}_{1}^{n-1})\right)\nabla\hat{\mu}_{1}^{n+\frac{1}{2}}\right]
		+\tau^3(K^{(1)})^{n+\frac{1}{2}}+O(\tau^4)+O(h^{m_0}),\\
		\hat{\mu}_{1}^{n+\frac{1}{2}}
		&=\frac{G(1+\hat{\Phi}_{1}^{n+1})-G(1+\hat{\Phi}_{1}^{n})}{\hat{\Phi}_{1}^{n+1}-\hat{\Phi}_{1}^{n}}
		+\frac{G(1-\hat{\Phi}_{1}^{n+1})-G(1-\hat{\Phi}_{1}^{n})}{\hat{\Phi}_{1}^{n+1}-\hat{\Phi}_{1}^{n}}\\
		&\quad-\theta_{0}\left(\frac{3}{2}\hat{\Phi}_{1}^{n}-\frac{1}{2}\hat{\Phi}_{1}^{n-1}\right)
		-\varepsilon^2\Delta\left(\frac{3}{4}\hat{\Phi}_{1}^{n+1}+\frac{1}{4}\hat{\Phi}_{1}^{n-1},\right)
		+\sigma(-\Delta)^{-1}\left(\hat{\Phi}_{1}^{n+\frac{1}{2}}
		-\overline{\hat{\Phi}_{1}^{n+\frac{1}{2}}}\right)\\
		&\quad+\tau \,
		\Big(\ln(1+\hat{\Phi}_{1}^{n+1})-\ln(1+\hat{\Phi}_{1}^{n})
		-\ln(1-\hat{\Phi}_{1}^{n+1})+\ln(1-\hat{\Phi}_{1}^{n})\Big).
	\end{aligned}
\end{equation*}
In the derivation, the following linearized expansions have been applied:
\begin{equation*}
	\begin{aligned}
		&H_{1+\hat{\Phi}_1^n}^1(1+\hat{\Phi}_1^{n+1})
		=H_{1+\Phi_{N}^n}^1(1+\Phi_{N}^{n+1})+\frac{\tau^2}{2}\left(\frac{\Phi_{\tau}^{n+1}}{1+\Phi_{N}^{n+1}}+\frac{\Phi_{\tau}^{n}}{1+\Phi_{N}^{n}}\right)+O(\tau^{3}),\\
		&\frac{1}{2}\left(\frac{\Phi_{\tau}^{n+1}}{1+\Phi_{N}^{n+1}}+\frac{\Phi_{\tau}^{n}}{1+\Phi_{N}^{n}}\right)
		=\frac{\Phi_{\tau}^{n+\frac{1}{2}}}{1+\Phi_{N}^{n+\frac{1}{2}}}+O(\tau^{2}),\\
		&\ln(1\pm\hat{\Phi}_1)=\ln(1\pm\Phi_{N}\pm\tau^2\mathcal{P}_{N}\Phi_{\tau})
		=\ln(1\pm\Phi_{N})\pm\frac{\tau^2\mathcal{P}_{N}\Phi_{\tau}}{1\pm\Phi_{N}}+O(\tau^{4}),
	\end{aligned}
\end{equation*}
where the property of $H_a^1(x)$ stated in Lemma~\ref{G prop} has been recalled. Moreover, owing to the mass-conservation property of $\Phi_{\tau}$ established in \eqref{massconser_phi_deltat 2nd}, the subsequent identities hold naturally:
\begin{equation*}
	\int_{\Omega}\mathcal{P}_{N}\Phi_{\tau}(\cdot,t)\,\mathrm{d}\mathbf{x}=0,\qquad
	\int_{\Omega}\hat{\Phi}_{1}(\cdot,t)\,\mathrm{d}\mathbf{x}=\int_{\Omega}\Phi_{N}(\cdot,t)\,\mathrm{d}\mathbf{x},\qquad\forall\, t>0.
\end{equation*}
As a result, we see that
\begin{equation}\label{mass conser hat Phi1 2nd}
	\begin{split}
		\frac{1}{\abs{\Omega}}\int_{\Omega}\hat{\Phi}_{1}^{n+1}\,\mathrm{d}\mathbf{x} 
		&= \frac{1}{\abs{\Omega}}\int_{\Omega}\hat{\Phi}_{1}^{n}\,\mathrm{d}\mathbf{x} 
		= \cdots 
		= \frac{1}{\abs{\Omega}}\int_{\Omega}\hat{\Phi}_{1}^{0}\,\mathrm{d}\mathbf{x} \\
		&= \frac{1}{\abs{\Omega}}\int_{\Omega}\Phi_{N}(\cdot,t=0)\,\mathrm{d}\mathbf{x} 
		= \overline{\phi^{0}}=\overline{\phi^{n}}=\overline{\phi^{n+1}}, \quad \forall\, n \in \mathbb{N}.
	\end{split}
\end{equation}
In terms of spatial discretization, we introduce the spatial correction term $\Phi_{h}$ to enhance the accuracy order. To be specific, a straightforward Taylor expansion yields
\begin{align}
	\frac{\hat{\Phi}_{1}^{n+1}-\hat{\Phi}_{1}^{n}}{\tau}
	&=\nabla_{h}\cdot\left[\left(\frac{3}{2}\breve{\mathcal{M}}(\hat{\Phi}_{1}^{n})-\frac{1}{2}\breve{\mathcal{M}}(\hat{\Phi}_{1}^{n-1})\right)\nabla_{h}\hat{\mu}_{1}^{n+\frac{1}{2}}\right]
	+h^2(Q^{(0)})^{n+\frac{1}{2}}+O(\tau^3+h^4),
	\label{hat Phi1 1}\\
	\hat{\mu}_{1}^{n+\frac{1}{2}}
	&=\frac{G(1+\hat{\Phi}_{1}^{n+1})-G(1+\hat{\Phi}_{1}^{n})}{\hat{\Phi}_{1}^{n+1}-\hat{\Phi}_{1}^{n}}
	+\frac{G(1-\hat{\Phi}_{1}^{n+1})-G(1-\hat{\Phi}_{1}^{n})}{\hat{\Phi}_{1}^{n+1}-\hat{\Phi}_{1}^{n}}
	\label{hat Phi1 2}\\
	&\quad-\theta_{0}\left(\frac{3}{2}\hat{\Phi}_{1}^{n}-\frac{1}{2}\hat{\Phi}_{1}^{n-1}\right)
	-\varepsilon^2\Delta_{h}\left(\frac{3}{4}\hat{\Phi}_{1}^{n+1}+\frac{1}{4}\hat{\Phi}_{1}^{n-1}\right)
	+\sigma(-\Delta_{h})^{-1}\left(\hat{\Phi}_{1}^{n+\frac{1}{2}}
	-\overline{\hat{\Phi}_{1}^{n+\frac{1}{2}}}\right)
	\nonumber\\
	&\quad+\tau \,
	\Big(\ln(1+\hat{\Phi}_{1}^{n+1})-\ln(1+\hat{\Phi}_{1}^{n})
	-\ln(1-\hat{\Phi}_{1}^{n+1})+\ln(1-\hat{\Phi}_{1}^{n})\Big),
	\nonumber
\end{align}
where $\breve{\mathcal{M}}(x)=A_h \mathcal{M}(x)$.
Similarly, the spatially discrete function $Q^{(0)}$ is smooth enough in the sense that its discrete derivatives are bounded. Because of the symmetry of the centered finite difference approximation, the $O(h^3)$ truncation error term vanishes. $\Phi_{h}$ is obtained by solving the following linear PDE system:
\begin{align}
	&\partial_{t}\Phi_{h}
	=\nabla\cdot\left(\mathcal{M}(\hat{\Phi}_{1})\nabla\mathcal{V}_{h}
	+\mathcal{M}'(\hat{\Phi}_{1})\Phi_{h}\nabla\mathcal{W}_{1}\right)-Q^{(0)},
	\label{Phih 1}\\
	&\mathcal{V}_{h}
	=\frac{\Phi_{h}}{1+\hat{\Phi}_{1}}+\frac{\Phi_{h}}{1-\hat{\Phi}_{1}}
	-\theta_{0}\Phi_{h}-\varepsilon^{2}\Delta\Phi_{h}
	+\sigma(-\Delta)^{-1}\Phi_{h},\\
	&\mathcal{W}_{1}
	=\ln(1+\hat{\Phi}_{1})-\ln(1-\hat{\Phi}_{1})
	-\theta_{0}\hat{\Phi}_{1}-\varepsilon^{2}\Delta\hat{\Phi}_{1}
	+\sigma(-\Delta)^{-1}(\hat{\Phi}_{1}-\overline{\hat{\Phi}}_{1}),
	\label{Phih 3}
\end{align}
Again, trivial initial data $\Phi_{h}(\cdot,t=0)\equiv 0$ could be imposed. 

An application of a full discretization to \eqref{Phih 1} – \eqref{Phih 3} leads to
\begin{align}
	\frac{\Phi_{h}^{n+1}\!-\!\Phi_{h}^{n}}{\tau}
	&=\nabla_{h}\!\cdot\!\left[\left(\frac{3}{2}\breve{\mathcal{M}}(\hat{\Phi}_1^n)
	\!-\!\frac{1}{2}\breve{\mathcal{M}}(\hat{\Phi}_1^{n \!-\!1})\right)\nabla_{h}\mathcal{V}_{h}^{n \!+\!\frac{1}{2}}
	\!+\!\left(\frac{3}{2}\breve{\mathcal{M}}'(\hat{\Phi}_{1}^{n})\Phi_{h}^{n}
	\!-\!\frac{1}{2}\breve{\mathcal{M}}'(\hat{\Phi}_{1}^{n \!-\!1})\Phi_{h}^{n \!-\!1}\right)\nabla_{h}\mathcal{W}_{1}^{n \!+\!\frac{1}{2}}\right]
	\nonumber\\
	&\quad-(Q^{(0)})^{n+\frac{1}{2}}+O(\tau^2+h^2),
	\label{dis Phih 1}\\
	\mathcal{V}_{h}^{n+\frac{1}{2}}
	&=\frac{\Phi_{h}^{n+\frac{1}{2}}}{1+\hat{\Phi}_{1}^{n+\frac{1}{2}}}
	+\frac{\Phi_{h}^{n+\frac{1}{2}}}{1-\hat{\Phi}_{1}^{n+\frac{1}{2}}}
	-\theta_{0}\check{\Phi}_{h}^{n+\frac{1}{2}}
	-\varepsilon^{2}\Delta_{h}\hat{\Phi}_{h}^{n+\frac{1}{2}}
	+\sigma(-\Delta_{h})^{-1}\Phi_{h}^{n+\frac{1}{2}},\\
	\mathcal{W}_{1}^{n+\frac{1}{2}}
	&=\frac{G(1+\hat{\Phi}_{1}^{n+1})-G(1+\hat{\Phi}_{1}^{n})}{\hat{\Phi}_{1}^{n+1}-\hat{\Phi}_{1}^{n}}
	+\frac{G(1-\hat{\Phi}_{1}^{n+1})-G(1-\hat{\Phi}_{1}^{n})}{\hat{\Phi}_{1}^{n+1}-\hat{\Phi}_{1}^{n}}
	-\theta_{0}\left(\frac{3}{2}\hat{\Phi}_{1}^{n}-\frac{1}{2}\hat{\Phi}_{1}^{n-1}\right)	\nonumber\\
	&\quad -\varepsilon^2\Delta_{h}\left(\frac{3}{4}\hat{\Phi}_{1}^{n+1}+\frac{1}{4}\hat{\Phi}_{1}^{n-1}\right)
	+\sigma(-\Delta_{h})^{-1}\left(\hat{\Phi}_{1}^{n+\frac{1}{2}}
	-\overline{\hat{\Phi}_{1}^{n+\frac{1}{2}}}\right),
	\label{dis Phih 3}
\end{align}
where
\begin{equation*}
	\check{\Phi}_{h}^{n+\frac{1}{2}}
	=\frac{3}{2}\Phi_{h}^{n}-\frac{1}{2}\Phi_{h}^{n-1},\;
	\hat{\Phi}_{h}^{n+\frac{1}{2}}
	=\frac{3}{4}\Phi_{h}^{n+1}+\frac{1}{4}\Phi_{h}^{n-1},\;
	\Phi_{h}^{n+\frac{1}{2}}
	=\frac{1}{2}\Phi_{h}^{n+1}+\frac{1}{2}\Phi_{h}^{n},\;
	\hat{\Phi}_{1}^{n+\frac{1}{2}}
	=\frac{1}{2}\hat{\Phi}_{1}^{n+1}+\frac{1}{2}\hat{\Phi}_{1}^{n}.
\end{equation*}
Finally, combining \eqref{hat Phi1 1} - \eqref{hat Phi1 2} with \eqref{dis Phih 1} - \eqref{dis Phih 3} yields the higher-order truncation error for $\hat{\Phi}$, as given by \eqref{hat Phi 1} - \eqref{hat Phi 2}. Of course, linear expansions are extensively utilized. Based on the fact that $\hat{\Phi}^{n}\in\mathfrak{B}^{K}$ and the mass conservation property of $\hat{\Phi}^{n}$ at the continuous level, the identities
\begin{equation}\label{mass conser hat Phi 2nd}
	\begin{split}
		\frac{1}{\abs{\Omega}}\int_{\Omega}\hat{\Phi}^{n+1}\,\mathrm{d}\mathbf{x} 
		= \frac{1}{\abs{\Omega}}\int_{\Omega}\hat{\Phi}^{n}\,\mathrm{d}\mathbf{x} 
		&= \cdots 
		= \frac{1}{\abs{\Omega}}\int_{\Omega}\hat{\Phi}^{0}\,\mathrm{d}\mathbf{x}
		= \frac{1}{\abs{\Omega}}\int_{\Omega}\hat{\Phi}_1(\cdot,t=0)\,\mathrm{d}\mathbf{x} \\
		&
		= \frac{1}{\abs{\Omega}}\int_{\Omega}\hat{\Phi}_{N}(\cdot,t=0)\,\mathrm{d}\mathbf{x} 
		= \overline{\phi^{0}}=\overline{\phi^{n}}=\overline{\phi^{n+1}}, \quad \forall n \in \mathbb{N},
	\end{split}
\end{equation}
follow from arguments analogous to those used in \eqref{mass conser hat Phi1 2nd}. Therefore, the local truncation error satisfies $\overline{\omega^{n+\frac{1}{2}}}=0$, at the discrete level for any $n\in\mathbb{N}$.

\section{Proof of Lemma \ref{rough bound for nolinear}}
	For the term $\mathcal{N}_3^{n}$ associated with the artificial regularization, a Taylor expansion yields
	\begin{align}
		&\mathcal{N}_{3}^{n} 
		=\frac{1}{\zeta^{(n+1)}}\tilde{e}^{n+1},\qquad
		\zeta^{(n+1)}\, \text{between}\, 1+\hat{\Phi}^{n+1}\,\text{and}\, 1+\phi^{n+1},\nonumber\\
		&\langle\tilde{e}^{n+1},\mathcal{N}_{3}^{n}\rangle
		=\langle\tilde{e}^{n+1},\frac{1}{\zeta^{(n+1)}}\tilde{e}^{n+1}\rangle\ge 0,
		\label{N3}
	\end{align}
	where the positivity of $1+\hat{\Phi}$ and $1+\phi^{n+1}$ has been used.
	For the other term, $\mathcal{N}_4^{n}$, the following estimate holds:
		\begin{align}
		&\mathcal{N}_4^{n} 
		=-\frac{1}{\zeta^{(n)}}\tilde{e}^{n},\qquad
		\zeta^{(n)}\, \text{between}\, 1+\hat{\Phi}^{n}\,\text{and}\, 1+\phi^{n},\nonumber\\
		&\abs{\frac{1}{\zeta^{(n)}}}
		\le\max\left\{\frac{1}{1+\hat{\Phi}^{n}},\,\frac{1}{1+\phi^{n}}\right\}
		\le \frac{2}{\epsilon_{0}^{\star}},\qquad
		(\,\text{by~\eqref{Phi sep} and~\eqref{sep for phi n}}\,),\nonumber\\
		& \langle\tilde{e}^{n+1},\mathcal{N}_{4}^{n}\rangle_{\Omega}
		\ge -\frac{2}{\epsilon_{0}^{\star}}\abs{\langle\tilde{e}^{n+1},\tilde{\phi^{n}}\rangle_{\Omega}}
		\ge -\frac{2h^3}{\epsilon_{0}^{\star}}\sum_{i,j,k}\abs{\tilde{e}_{i,j,k}^{n+1}}\cdot\abs{\tilde{e}_{i,j,k}^{n}}.\label{N4}
	\end{align}
	Next, we consider the two components in the decomposition \eqref{N1n split} of $\mathcal{N}_{1}^{n}$ separately. We begin with the term $\mathcal{N}_{12}^{n}$. By the mean value theorem and properties in Lemma~\ref{G prop}, 
	\begin{equation*}
		\begin{split}
			\mathcal{N}_{12}^{n}
			&=(H_{1+\hat{\Phi}^{n+1}}^{1})'(\eta^{(n)})\cdot\tilde{e}^{n},\qquad
			\eta^{(n)}\, \text{between}\, 1+\hat{\Phi}^{n}\,\text{and}\, 1+\phi^{n},\\
			&=\frac{1}{2\xi^{(n)}}\cdot\tilde{e}^{n},\qquad
			\xi^{(n)}\, \text{between}\, 1+\hat{\Phi}^{n+1}\,\text{and}\, \eta^{(n)}.
		\end{split}
	\end{equation*}
	The above relations indicate that $\xi^{(n)}$ is between $1+\hat{\Phi}^{n+1}$, $1+\hat{\Phi}^{n}$, and $1+\phi^{n}$, and satisfies the boundedness estimate
	\begin{equation*}
		\abs{\frac{1}{\xi^{(n)}}}
		\le\max\left\{\frac{1}{1+\hat{\Phi}^{n+1}},\,\frac{1}{1+\hat{\Phi}^{n}},\,\frac{1}{1+\phi^{n}}\right\}
		\le \frac{2}{\epsilon_{0}^{\star}} = \frac{4}{\epsilon_{0}},
	\end{equation*}
	where the separation properties \eqref{Phi sep} and \eqref{sep for phi n} have been recalled. Consequently,
	\begin{equation*}
		\abs{\mathcal{N}_{12}^{n}}
		=\abs{(H_{1+\hat{\Phi}^{n+1}}^{1})'(\eta^{(n)})}\cdot\abs{\tilde{e}^{n}}
		=\abs{\frac{1}{2\xi^{(n)}}}\cdot\abs{\tilde{e}^{n}}
		\le\frac{1}{\epsilon_{0}^{\star}}\abs{\tilde{e}^{n}}.
	\end{equation*}
    Based on the above absolute value estimate, we obtain
    \begin{equation}\label{inner N12}
    	\langle\tilde{e}^{n+1},\mathcal{N}_{12}^{n}\rangle
    	\ge -\frac{1}{\epsilon_{0}^{\star}}\abs{\langle\tilde{e}^{n+1},\tilde{e}^{n}\rangle}
    	\ge-\frac{h^3}{\epsilon_{0}^{\star}}\sum_{i,j,k}\abs{\tilde{e}_{i,j,k}^{n+1}}\cdot\abs{\tilde{e}_{i,j,k}^{n}}.
    \end{equation}
    For the term $\mathcal{N}_{11}^{n}$, a similar nonlinear analysis yields
  \begin{equation}\label{N11 1}
	\begin{aligned}
		\mathcal{N}_{11}^{n} 
		&=(H_{1+\phi^{n}}^{1})'(\eta^{(n+1)})\cdot\tilde{e}^{n+1},\,\,
		\eta^{(n+1)}\, \text{between}\, 1+\hat{\Phi}^{n+1}\,\text{and}\, 1+\phi^{n+1},\\
		&=\frac{1}{2\xi^{(n+1)}}\cdot\tilde{e}^{n+1},\quad
		\xi^{(n+1)}\, \text{between}\, 1+\phi^{n}\,\text{and}\, \eta^{(n+1)}.
	\end{aligned}
\end{equation}
From \eqref{N11 1}, it follows that $\xi^{(n+1)}$ lies between $1+\phi^{n}$, $1+\phi^{n+1}$, and $1+\hat{\Phi}^{n+1}$, so that
\begin{equation*}
	\frac{1}{\xi^{(n+1)}}
	\ge\min\left\{\frac{1}{1+\hat{\Phi}^{n+1}},\,\frac{1}{1+\phi^{n+1}},\,\frac{1}{1+\phi^{n}}\right\}.
\end{equation*}
This in turn leads to
\begin{equation}\label{inner N11}
	\langle\tilde{e}^{n+1},\mathcal{N}_{11}^{n}\rangle
	=\langle\frac{1}{2\xi^{(n+1)}},(\tilde{e}^{n+1})^{2}\rangle
	=\frac{h^3}{2}\sum_{i,j,k}\frac{1}{(\xi^{(n+1)})_{i,j,k}}\cdot
	\abs{\tilde{e}^{n+1}_{i,j,k}}^2.
\end{equation}
Combining \eqref{inner N12} and \eqref{inner N11}, we obtain the following inner product estimate for $\mathcal{N}_{1}^{n}$:
\begin{equation*}
	\langle\tilde{e}^{n+1},\mathcal{N}_{1}^{n}\rangle
	\ge h^3\sum_{i,j,k}\Big(
	\frac{1}{2(\xi^{(n+1)})_{i,j,k}}\cdot\abs{\tilde{e}^{n+1}_{i,j,k}}^2
	-\frac{1}{\epsilon_{0}^{\star}}\abs{\tilde{e}^{n+1}_{i,j,k}}\cdot\abs{\tilde{e}^{n}_{i,j,k}}
	\Big).
\end{equation*}
Together with the estimates \eqref{N3} and \eqref{N4}, we finally arrive at
\begin{equation}\label{nonlinear part 1}
	\begin{split}
		&\quad\,\langle\tilde{e}^{n+1},\mathcal{N}_{1}^{n}\rangle
		+\tau\left(\langle\tilde{e}^{n+1},\mathcal{N}_{3}^{n}\rangle
		+\langle\tilde{e}^{n+1},\mathcal{N}_{4}^{n}\rangle\right)\\
		&\ge\, h^3\sum_{i,j,k}\Big(
		\frac{1}{2(\xi^{(n+1)})_{i,j,k}}\cdot\abs{\tilde{e}^{n+1}_{i,j,k}}^2
		-\frac{3}{\epsilon_{0}^{\star}}\abs{\tilde{e}^{n+1}_{i,j,k}}\cdot\abs{\tilde{e}^{n}_{i,j,k}}\Big).
	\end{split}
\end{equation}

At any fixed grid point $(i,\, j,\, k)$, Theorem~\ref{positive theo} yields
	\begin{equation*}
		0<1+\phi^{n+1}_{i,j,k}<2<2C^{\star}+3,\quad
		\text{i.e.,}\,\frac{1}{1+\phi^{n+1}_{i,j,k}}>\frac{1}{2}>\frac{1}{2C^{\star}+3}.
	\end{equation*}
	On the other hand, \eqref{the regularity assumption} and \eqref{phi infty es 2nd} provide the lower bounds
	\begin{equation*}
		\begin{aligned}
			&\frac{1}{1+\hat{\Phi}^{n+1}_{i,j,k}}
			\ge \frac{1}{C^{\star}+1}>\frac{1}{2C^{\star}+3},\qquad
			\frac{1}{1+\phi^{n}_{i,j,k}}
			\ge \frac{1}{C^{\star}+2}>\frac{1}{2C^{\star}+3},\\
			&\frac{1}{(\xi^{(n+1)})_{i,j,k}}
			\ge\min\left\{\frac{1}{1+\hat{\Phi}^{n+1}_{i,j,k}},\,\frac{1}{1+\phi^{n+1}_{i,j,k}},\,\frac{1}{1+\phi^{n}_{i,j,k}}\right\}
			>\frac{1}{2C^{\star}+3}.
		\end{aligned}
	\end{equation*}
	It follows that
	\begin{equation*}
		\begin{aligned}
			&\quad\frac{1}{2(\xi^{(n+1)})_{i,j,k}}\cdot\abs{\tilde{e}^{n+1}_{i,j,k}}^2
			-\frac{3}{\epsilon_{0}^{\star}}\abs{\tilde{e}^{n+1}_{i,j,k}}\cdot\abs{\tilde{e}^{n}_{i,j,k}}\\
			&\ge \frac{1}{4C^{\star}+6}\abs{\tilde{e}^{n+1}_{i,j,k}}^2
			-\frac{1}{8C^{\star}+12}\abs{\tilde{e}^{n+1}_{i,j,k}}^2
			-\frac{9(2C^{\star}+3)}{(\epsilon_{0}^{\star})^2}\abs{\tilde{e}^{n}_{i,j,k}}^2\\
			&\ge\frac{1}{8C^{\star}+12}\abs{\tilde{e}^{n+1}_{i,j,k}}^2
			-\frac{9(2C^{\star}+3)}{(\epsilon_{0}^{\star})^2}\abs{\tilde{e}^{n}_{i,j,k}}^2.
		\end{aligned}
	\end{equation*}
	Substituting this into \eqref{nonlinear part 1} leads to the estimate \eqref{5.3.1} as stated in Lemma~\ref{rough bound for nolinear} :
	\begin{equation*}
		\begin{split}
			&\quad\langle\tilde{e}^{n+1},\mathcal{N}_{1}^{n}\rangle
			+\tau\langle\tilde{e}^{n+1},\mathcal{N}_{3}^{n}+\mathcal{N}_{4}^{n}\rangle\\
			&\ge h^3\sum_{i,j,k}\Big(
			\frac{1}{8C^{\star}+12}\abs{\tilde{e}^{n+1}_{i,j,k}}^2
			-\frac{9(2C^{\star}+3)}{(\epsilon_{0}^{\star})^2}\abs{\tilde{e}^{n}_{i,j,k}}^2\Big)\\
			&=:\tilde{C_{5}}\|\tilde{e}^{n+1}\|_{2}^{2}
			-\tilde{C_{4}}\|\tilde{e}^{n}\|_{2}^{2},
		\end{split}
	\end{equation*}
	where 
	\begin{equation}
		\tilde{C_{4}}=\frac{9(2C^{\star}+3)}{(\epsilon_{0}^{\star})^2},\qquad
		\tilde{C_{5}}=\frac{1}{8C^{\star}+12}.
	\end{equation}
	
	The second inequality in Lemma~\ref{rough bound for nolinear}, namely \eqref{4.1.2}, can be established by an analogous argument and is therefore omitted.

\section{Proof of Lemma \ref{nonlinear es in refine 2nd}}
Decomposition \eqref{N1n split} for $\mathcal{N}_{1}^{n}$ is still valid. Firstly,
\begin{equation}\label{eta xi 1}
	\begin{split}
		D_{x}(\mathcal{N}_{11}^{n})_{i+\frac{1}{2},j,k}
		&=\frac{1}{h}(H_{1+\phi^n_{i+1,j,k}}^1)'(\eta_{i+1,j,k}^{(n+1)})\,\tilde{e}_{i+1,j,k}^{n+1}
		-\frac{1}{h}(H_{1+\phi^n_{i,j,k}}^1)'(\eta_{i,j,k}^{(n+1)})\,\tilde{e}_{i,j,k}^{n+1}\\
		&=\frac{1}{2\xi_{i+1,j,k}^{(n+1)}}\frac{\tilde{e}_{i+1,j,k}^{n+1}}{h}
		-\frac{1}{2\xi_{i,j,k}^{(n+1)}}\frac{\tilde{e}_{i,j,k}^{n+1}}{h}\\
		&=\frac{1}{2h}(\frac{1}{\xi_{i+1,j,k}^{(n+1)}}-\frac{1}{\xi_{i,j,k}^{(n+1)}})\,\tilde{e}_{i+1,j,k}^{n+1}
		+\frac{1}{2\xi_{i,j,k}^{(n+1)}}D_{x}\tilde{e}_{i+\frac{1}{2},j,k}^{n+1},
	\end{split}
\end{equation}
where $\eta^{(n+1)}_{\vec{\alpha}}$ lies between $1+\hat{\Phi}^{n+1}_{\vec{\alpha}}$ and $1+\phi^{n+1}_{\vec{\alpha}}$, $\xi^{(n+1)}_{\vec{\alpha}}$ lies between $1+\phi^{n}_{\vec{\alpha}}$ and $\eta^{(n+1)}_{\vec{\alpha}}$, and $\vec{\alpha}=(i,j,k),\,(i+1,j,k)$. Therefore, a pointwise estimate can be established,
\begin{equation}\label{DxN11n}
	\abs{D_{x}(\mathcal{N}_{11}^{n})_{i+\frac{1}{2},j,k}}
	\le\abs{\frac{1}{2h}(\frac{1}{\xi_{i+1,j,k}^{(n+1)}}-\frac{1}{\xi_{i,j,k}^{(n+1)}})}
	\cdot\abs{\tilde{e}_{i+1,j,k}^{n+1}}
	+\abs{\frac{1}{2\xi_{i,j,k}^{(n+1)}}}\cdot\abs{D_{x}\tilde{e}_{i+\frac{1}{2},j,k}^{n+1}}.
\end{equation}
For the first term on the right-hand side of equation~\eqref{DxN11n}, combining~\eqref{es for discrete temporal derivative} with~\eqref{eta xi 1} and applying the triangle inequality repeatedly yields
\begin{equation}
	\max\left\{	\abs{\xi_{i+1,j,k}^{(n+1)}-(1+\hat{\Phi}_{i+1,j,k}^{n+1})},\,
	\abs{\xi_{i+1,j,k}^{(n+1)}-(1+\hat{\Phi}_{i+1,j,k}^{n})}\right\}
	\le (C^{\star}+3)\tau.
\end{equation}
From the separation properties of $\hat{\Phi}$ and $\phi$, it further follows that
\begin{equation}\label{xii1}
	\begin{split}
		&\quad\abs{\frac{1}{2\xi_{i+1,j,k}^{(n+1)}}
			-\frac{1}{(1+\hat{\Phi}_{i+1,j,k}^{n+1})+(1+\hat{\Phi}_{i+1,j,k}^{n})}}\\
		&\le \frac{\abs{(1+\hat{\Phi}_{i+1,j,k}^{n+1})-\xi_{i+1,j,k}^{(n+1)}}
			+\abs{(1+\hat{\Phi}_{i+1,j,k}^{n})-\xi_{i+1,j,k}^{(n+1)}}}
		{\abs{2\xi_{i+1,j,k}^{(n+1)}\big((1+\hat{\Phi}_{i+1,j,k}^{n+1})+(1+\hat{\Phi}_{i+1,j,k}^{n})\big)}}\\
		&\le 2(C^{\star}+3)\,\tau \,\abs{\frac{1}{2\xi_{i+1,j,k}^{(n+1)}}}\cdot
		\abs{\frac{1}{(1+\hat{\Phi}_{i+1,j,k}^{n+1})+(1+\hat{\Phi}_{i+1,j,k}^{n})}}\\
		&\le M^{(0)}\tau(\epsilon_{0}^{\star})^{-2},
	\end{split}
\end{equation}
where $M^{(0)}:=2(C^{\star}+3)$. In the derivation, equations
\begin{align*}
	&\abs{\frac{1}{\xi_{i+1,j,k}^{(n+1)}}}
	\le\max\{\frac{1}{1+\phi_{i+1,j,k}^{n}},\,\frac{1}{1+\phi_{i+1,j,k}^{n+1}},\,\frac{1}{1+\hat{\Phi}_{i+1,j,k}^{n+1}}\}
	\le\frac{2}{\epsilon_{0}^{\star}},\\
	&\frac{(1+\hat{\Phi}^{n+1})+(1+\hat{\Phi}^{n})}{2}
	=1+\hat{\Phi}^{n+\frac{1}{2}}+O(\tau^2)
	\ge\epsilon_{0}^{\star}+O(\tau^2)\ge\frac{\epsilon_{0}^{\star}}{2},
\end{align*}
have been used. Similar bound can be derived for $\xi_{i,j,k}^{(n+1)}$; technical details are omitted for the sake of brevity.
\begin{equation}\label{xii}
	\begin{aligned}
		&\max\left\{\abs{\xi_{i,j,k}^{(n+1)}-(1+\hat{\Phi}_{i,j,k}^{n+1})},\,
		\abs{\xi_{i,j,k}^{(n+1)}-(1+\hat{\Phi}_{i,j,k}^{n})}\right\}
		\le (C^{\star}+3)\tau,\\
		&\abs{\frac{1}{(1+\hat{\Phi}_{i,j,k}^{n+1})+(1+\hat{\Phi}_{i,j,k}^{n})}
			-\frac{1}{2\xi_{i,j,k}^{(n+1)}}}
		\le M^{(0)}\tau(\epsilon_{0}^{\star})^{-2}.
	\end{aligned}
\end{equation}

Simultaneously, we observe that
\begin{equation}\label{frac minus frac}
	\begin{split}
		&\quad\abs{\frac{1}{(1+\hat{\Phi}_{i+1,j,k}^{n+1})+(1+\hat{\Phi}_{i+1,j,k}^{n})}
			-\frac{1}{(1+\hat{\Phi}_{i,j,k}^{n+1})+(1+\hat{\Phi}_{i,j,k}^{n})}
		}\cdot\frac{1}{h}\\
		&=\abs{\frac{(\hat{\Phi}_{i+1,j,k}^{n+1}-\hat{\Phi}_{i,j,k}^{n+1})
				+(\hat{\Phi}_{i+1,j,k}^{n}-\hat{\Phi}_{i,j,k}^{n})}
			{2\left(1+\hat{\Phi}_{i+1,j,k}^{n+\frac{1}{2}}+O(\tau^{2})\right)\cdot
				2\left(1+\hat{\Phi}_{i,j,k}^{n+\frac{1}{2}}+O(\tau^{2})\right)}}\cdot\frac{1}{h}\\
		&\le(\epsilon_{0}^{\star})^{-2}\left(\abs{D_{x}\hat{\Phi}_{i+\frac{1}{2},j,k}^{n+1}}
		+\abs{D_{x}\hat{\Phi}_{i+\frac{1}{2},j,k}^{n}}\right).
	\end{split}
\end{equation}
Integrating the estimates~\eqref{xii1} - \eqref{frac minus frac} yields
\begin{equation*}
	\begin{split}
		&\quad\frac{1}{h}\cdot\abs{\frac{1}{2\xi_{i+1,j,k}^{(n+1)}}-\frac{1}{2\xi_{i,j,k}^{(n+1)}}}\\
		&\le \frac{1}{h}\cdot\abs{\frac{1}{2\xi_{i+1,j,k}^{(n+1)}}
			-\frac{1}{(1+\hat{\Phi}_{i+1,j,k}^{n+1})+(1+\hat{\Phi}_{i+1,j,k}^{n})}}\\
		&\quad+\frac{1}{h}\cdot\abs{\frac{1}{(1+\hat{\Phi}_{i+1,j,k}^{n+1})+(1+\hat{\Phi}_{i+1,j,k}^{n})}
			-\frac{1}{(1+\hat{\Phi}_{i,j,k}^{n+1})+(1+\hat{\Phi}_{i,j,k}^{n})}
		}\\
		&\quad+\frac{1}{h}\cdot\abs{\frac{1}{(1+\hat{\Phi}_{i,j,k}^{n+1})+(1+\hat{\Phi}_{i,j,k}^{n})}
			-\frac{1}{2\xi_{i,j,k}^{(n+1)}}}\\
		&\le 2C_2M^{(0)}(\epsilon_{0}^{\star})^{-2}
		+(\epsilon_{0}^{\star})^{-2}\left(\abs{D_{x}\hat{\Phi}_{i+\frac{1}{2},j,k}^{n+1}}
		+\abs{D_{x}\hat{\Phi}_{i+\frac{1}{2},j,k}^{n}}\right).
	\end{split}
\end{equation*}
$C_2$ is the linear constraint constant. Returning to~\eqref{eta xi 1} and, with the aid of the regularity assumption~\eqref{the regularity assumption}, we obtain the estimate for the derivative of $\mathcal{N}_{11}^{n}$ in $x$-direction:
\begin{equation*}
	\|D_{x}\mathcal{N}_{11}^{n}\|_{2}
	\le 2(\epsilon_{0}^{\star})^{-2}\hat{C}\|\tilde{e}^{n+1}\|_{4}
	+(\epsilon_{0}^{\star})^{-1}\|D_{x}\tilde{e}^{n+1}\|_{2},\quad
	\hat{C}:=C_2M^{(0)}+C^{\star}.
\end{equation*}
Analogous conclusions can be derived in the $y$- and $z$-directions,
\begin{align*}
	&\|D_{y}\mathcal{N}_{11}^{n}\|_{2}
	\le2(\epsilon_{0}^{\star})^{-2}\hat{C}\|\tilde{e}^{n+1}\|_{4}
	+(\epsilon_{0}^{\star})^{-1}\|D_{y}\tilde{e}^{n+1}\|_{2},\\
	&\|D_{z}\mathcal{N}_{11}^{n}\|_{2}
	\le2(\epsilon_{0}^{\star})^{-2}\hat{C}\|\tilde{e}^{n+1}\|_{4}
	+(\epsilon_{0}^{\star})^{-1}\|D_{z}\tilde{e}^{n+1}\|_{2}.
\end{align*}
Combining the three directional results gives the upper bound for $\mathcal{N}_{11}^{n}$:
\begin{equation}\label{N11 norm 2}
	\|\nabla_{h}\mathcal{N}_{11}^{n}\|_{2}
	\le 2\sqrt{3}(\epsilon_{0}^{\star})^{-2}\hat{C}\|\tilde{e}^{n+1}\|_{4}
	+(\epsilon_{0}^{\star})^{-1}\|\nabla_{h}\tilde{e}^{n+1}\|_{2}.
\end{equation}
Using the same approach for $\mathcal{N}_{12}^{n}$, we have
\begin{equation}\label{N12 norm 2}
	\|\nabla_{h}\mathcal{N}_{12}^{n}\|_{2}
	\le 2\sqrt{3}(\epsilon_{0}^{\star})^{-2}\hat{C}\|\tilde{e}^{n}\|_{4}
	+(\epsilon_{0}^{\star})^{-1}\|\nabla_{h}\tilde{e}^{n}\|_{2}.
\end{equation}
Estimates~\eqref{N11 norm 2} and \eqref{N12 norm 2} together establish the discrete gradient norm bound for $\mathcal{N}_{1}^{n}$:
\begin{equation*}
	\begin{split}
		\|\nabla_{h}\mathcal{N}_{1}^{n}\|_2^2
		\le&\, 48(\epsilon_{0}^{\star})^{-4}\hat{C}^2\left(\|\tilde{e}^{n+1}\|_{4}^2+\|\tilde{e}^{n}\|_{4}^2\right)\\
		&+4(\epsilon_{0}^{\star})^{-2}\left(\|\nabla_{h}\tilde{e}^{n+1}\|_{2}^2+\|\nabla_{h}\tilde{e}^{n}\|_{2}^2\right).
	\end{split}
\end{equation*}
By applying the Poincar{\'e} inequality and the discrete Sobolev inequalities (Lemma~\ref{norm4}), we deduce the first assertion of Lemma~\ref{nonlinear es in refine 2nd}:
\begin{equation*}
	\|\nabla_{h}\mathcal{N}_{1}^{n}\|_{2}^{2}
	\le C^{(1)}\left(\|\nabla_{h}\tilde{e}^{n+1}\|_{2}^{2}+\|\nabla_{h}\tilde{e}^{n}\|_{2}^{2}\right),
\end{equation*}
where $C^{(1)}:=48(\epsilon_{0}^{\star})^{-4}\hat{C}^{2}C+4(\epsilon_{0}^{\star})^{-2}$. The estimate for $\nabla_{h}\mathcal{N}_{2}^{n}$ follows analogously.

Based on the estimates~\eqref{es for norm4 2nd} - \eqref{sep for phi n1} and the discussion in \cite{guo2024convergence}, the remaining part of Lemma~\ref{nonlinear es in refine 2nd} concerning $\nabla_{h}\mathcal{N}_{k}^{n}$, $k=3,\,4,\,5,\,6$, also holds:
\begin{align*}
	&\|\nabla_{h}\mathcal{N}_{m}^{n}\|_{2}
	\le 2(\epsilon_{0}^{\star})^{-1}\|\nabla_{h}\tilde{e}^{n+1}\|_{2}
	+2\sqrt{3}(\epsilon_{0}^{\star})^{-2}(C^{\star}+\frac{1}{2})\|\tilde{e}^{n+1}\|_{4},\quad m=3,\, 5,\\
	&\|\nabla_{h}\mathcal{N}_{l}^{n}\|_{2}
	\le 2(\epsilon_{0}^{\star})^{-1}\|\nabla_{h}\tilde{e}^{n}\|_{2}
	+2\sqrt{3}(\epsilon_{0}^{\star})^{-2}(C^{\star}+\frac{1}{2})\|\tilde{e}^{n}\|_{4},\quad l=4,\, 6.   
\end{align*}
A repeated application of the basic inequality, together with the discrete Sobolev and Poincar{\'e} inequalities, yields
\begin{align*}
	&\|\nabla_{h}\mathcal{N}_{m}^{n}\|_{2}^{2}
	\le C^{(2)}\|\nabla_{h}\tilde{e}^{n+1}\|_{2}^{2},\quad m=3,\, 5,\\
	&\|\nabla_{h}\mathcal{N}_{l}^{n}\|_{2}^{2}
	\le C^{(2)}\|\nabla_{h}\tilde{e}^{n}\|_{2}^{2},\quad l=4,\, 6,
\end{align*}
where $C^{(2)}=8(\epsilon_{0}^{\star})^{-2}+24(\epsilon_{0}^{\star})^{-4}C(C^{\star}+\frac{1}{2})^{2}$. 

This completes the proof of Lemma~\ref{nonlinear es in refine 2nd}.

\end{document}